\begin{document}

\newtheorem{theorem}{Theorem}[section]
\newtheorem{lemma}[theorem]{Lemma}
\newtheorem{proposition}[theorem]{Proposition}
\newtheorem{corollary}[theorem]{Corollary}

\theoremstyle{definition}
\newtheorem{definition}[theorem]{Definition}
\newtheorem{remark}[theorem]{Remark}
\newtheorem{example}[theorem]{Example}
\newtheorem{problem}[theorem]{Problem}
\newtheorem*{acknowledgements}{Acknowledgements}

\numberwithin{equation}{section}

\renewcommand{\subjclassname}{%
\textup{2020} Mathematics Subject Classification}


\title[Measure data for a general class of nonlinear elliptic problems]{Measure data for a general class of nonlinear elliptic problems}


\author[M. El Ansari]{Mohammed EL ANSARI$^*$}
\address{University of Sidi Mohamed Ben Abdellah, Laboratory of Engineering Sciences, Multidisciplinary Faculty of Taza, P.O. Box 1223 Taza, Morocco.}
\email{$mohammed.elansari1@usmba.ac.ma$}

\author[Y. Akdim]{Youssef AKDIM}
\address{University of Sidi Mohamed Ben Abdellah, LAMA Laboratory, Faculty of Sciences D'har El Mahraz, P.O. Box 1796 Atlas Fez, Morocco.}
\email{$youssef.akdim@usmba.ac.ma$}

\author[S. Lalaoui Rhali]{Soumia LALAOUI RHALI}
\address{University of Sidi Mohamed Ben Abdellah,  Laboratory of Engineering Sciences, Multidisciplinary Faculty of Taza, P.O. Box 1223 Taza, Morocco.}
\email{$soumia.lalaoui@usmba.ac.ma$}

\keywords{Anisotrpic Sobolev spaces, Measure data, maximal monotone graph, Nonlinear elliptic problems.}
\subjclass[2020]{35J60; 35A01}

\maketitle

\begin{abstract}
We consider nonlinear elliptic inclusion having a measure in the right-hand side of the type $\beta(u)-div a(x,Du)\ni \mu$  in   $\Omega$ a bounded domain in $\mathbb{R}^{N},$ with $\beta$ is a maximal monotone graph in $\mathbb{R}^2$ and $a(x,Du)$ is a Leray-Lions type operator. We study a suitable notion of solution for this kind of problem. The functional setting involves anisotropic Sobolev spaces.
\end{abstract}

\section{Introduction}

In this paper, we investigate  the existence and uniqueness of an appropriate solution  for  anisotropic elliptic problems with Dirichlet boundary condition
 
\begin{equation*}
 (E,\mu)
   \left\{
\begin{array}{l@{~}l@{~}l@{~}l}
\beta(u)-div a(x,Du)\ni \mu  & \text{in}   &\Omega,\\
u=0 &  \text{on} & \partial\Omega.
\end{array}
 \right.
\end{equation*}

where $\Omega $ is a bounded domain in $\mathbb{R}^{N}(N\geq 1)$ and  $\partial \Omega $
its Lipschitz boundary if $N\geq 2$, a right-hand side $\mu$  is Radon measure.  $\beta :\mathbb{R}%
\rightarrow 2^{\mathbb{R}}$ is a set valued, maximal monotone mapping such
that $0\in \beta (0)$ and 
$a:\Omega \times \mathbb{R^{N}}\rightarrow \mathbb{R^{N}}$ is a
Carath\'{e}odory function satisfying the following conditions, for all $\xi ,\eta \in \mathbb{R}^{N}$ and a.e. in $\Omega$:
$$(\mathbf{H}_{1}) \hspace*{1cm}\sum_{i=1}^{N}a_{i}(x,\xi ).\xi _{i}\geq \lambda \sum_{i=1}^{N}|\xi
_{i}|^{p_{i}}$$

$$(\mathbf{H}_{2}) \hspace*{1cm}|a_i(x,\xi)|\leq \gamma (d_i(x)+|\xi_i^{p_i-1}|) $$

$$(\mathbf{H}_{3}) \hspace*{1cm}(a(x,\xi )-a(x,\eta )).(\xi -\eta )\geq 0,$$

Where $\lambda, \gamma$ are some positive constants, $d_i$ is a positive function in $L^{p^{\prime}_i}(\Omega)$.

Under the above assumptions $(\mathbf{H}_{1})$- $(\mathbf{H}_{3})$, we proved in \cite{ref:L1} the existence and uniqueness of renormalized solutions to $(E, f)$ for  $L^{\infty }-$ data, and in  \cite{ref:L2} for $L^{1}$- data

\begin{equation*}
 (E, f)  \left\{
\begin{array}{l@{~}l@{~}l@{~}l}
\beta(u)-div(a(x,Du)+F(u))\ni f  & \text{in}   &\Omega,\\
u=0 &  \text{on} & \partial\Omega.
\end{array}
 \right.
\end{equation*}

Now, our problem $(E, \mu)$ can be viewed as a generalization of the problem $(E,f)$ such that we  extend the existence results to measure data.

One of the motivations for studying $(E, \mu)$ comes from the role of measures in the study of nonlinear partial differential equations that have become
more and more important in the last years, not only because it belongs to the mathematical spirit to try to extend the scope of a theory, but also because the extension from the function setting to the measure framework appeared in many areas of science, including fluid dynamics \cite{ref:17}, image processing \cite{ref:205}, and mathematical finance.

In dealing with measures data, there are several works in this direction \cite{ref:190, ref:33, ref:192, ref:193, ref:195, ref:196, ref:198}. Boccardo and Gallou\"{e}t proved  the existence of a solution in the sense of distributions of the nonlinear case if we take $\beta\equiv 0$ in $(E,\mu)$:

\begin{equation*}
 (P)
   \left\{
\begin{array}{l@{~}l@{~}l@{~}l}
-div a(x,Du)\ni \mu  & \text{in}   &\Omega,\\
u=0 &  \text{on} & \partial\Omega.
\end{array}
 \right.
\end{equation*}

but this solution is not unique in general. So, it was necessary to find some extra conditions of the distributional solutions of $(P)$ in order to ensure both existence and uniqueness. Three definitions of solutions were introduced: entropy solutions \cite{ref:4}; renormalized solutions \cite{D1} and the notion of solution obtained as limit of approximations \cite{ref:191}. For diffuse measure, that is, for    $ \mu \in L^1(\Omega)+W^{-1, {{p'}}}(\Omega)$, the problem $(P)$ was solved by L. Boccardo, T. Gallou\"{e}t and L. Orsina  in the framework of entropy solutions \cite{ref:192}, and for general measures by G. Dal Maso et al. in \cite{ref:33} in the framework of renormalized solutions. If we take $\beta \not\equiv 0$, $\beta(r)= |r|^{p-1}.r$ and $div a(x, Du)= \Delta u$ in $(E,\mu)$, P. B\'{e}nillan and H. Brezis proved in \cite{ref:194} the existence of weak solutions. Here, we treated the general case when $\beta$ is maximal monotone graph in $\mathbb{R}\times \mathbb{R}$, we recall that this problem has been studied by   N. Igbida et al. in classical Sobolev spaces \cite{ref:195} and F. Andreu et al. treated the case of nonhomogeneous Neumann boundary condition \cite{ref:196}
\begin{equation*}
 (E,\mu, \psi)
   \left\{
\begin{array}{l@{~}l@{~}l@{~}l}
\beta(u)-div(a(x,Du))\ni \mu  & \text{in}   &\Omega,\\
\gamma(u) + a(x, D(u))\ni \psi &  \text{on} & \partial\Omega.
\end{array}
 \right.
\end{equation*}

for any $\mu \in L^{p'}(\Omega)$ and $\psi \in L^{p'}(\partial \Omega)$.

 Our goal here is to extend these results in the framework of anisotropic Sobolev spaces with diffuse measure. There are two major difficulties, ones connected with the domain of the nonlinearity $\beta$ and also with the regularity of the measure $\mu$. We set 
 
 $$
 \operatorname{int}(\operatorname{dom} \beta)=(m, M) \text { with }-\infty \leq m \leq 0 \leq M \leq+\infty.
 $$

The paper is organized as follows: In Section $2$, we recall the standard framework of
anisotropic Sobolev spaces and we proved the definition of elliptic $\overrightarrow{p}$ capacity and some notations which will be used frequently also some technical lemmas. In Section $3,$ we
give our main results on the existence and uniqueness of solutions. We devote Section 4 to the proof of Theorem \ref{1.1}. In section 5, we will prove Theorem \ref{1.2} and Theorem \ref{1.3}.

\section{Preliminaries}

\subsection{Anisotropic Sobolev spaces }
Let $\Omega$ be a bounded open subset of $\mathbb{R}^N(N \geq 2)$,

Let $p_0, p_1, \ldots, p_N$ be $N+1$ exponents, with $1<p_i<\infty$ for $i=0, \ldots, N$. We denote
$$
\vec{p}=\left(p_0, \ldots, p_N\right), \quad D^0 u=u \quad \text { and } \quad D^i u=\frac{\partial u}{\partial x_i} \quad \text { for } \quad i=1, \ldots, N,
$$
and we define
$$
\overline{p}=\min \left\{p_0, p_1, \ldots, p_N\right\} \quad \text { then } \quad \overline{p}>1
$$

The anisotropic  Sobolev spaces  $W^{1, \overrightarrow{p}}(\Omega)$ is defined as follow (see \cite{ref:12})
$$
W^{1, \overrightarrow{p}}(\Omega)=\left\{u \in L^{p_0}(\Omega) \quad \text { and } \quad D^i u \in L^{p_i}(\Omega) \quad \text { for } \quad i=1,2, \ldots, N\right\},
$$
it is a Banach space with respect to norm
$$
\lVert u\lVert_{W^{1,{\overrightarrow{p}}}(\Omega)}=\sum_{i=0}^N\left\|D^i u\right\|_{p_i}
$$

  The space $u \in W^{1,{\overrightarrow{p}}}_0(\Omega)$ is the closure of $C^\infty_0(\Omega)$ with respect to this norm.\\
 The dual space of anisotropic Sobolev space
  $W^{1,{\overrightarrow{p}}}_0(\Omega)$ is equivalent to $W^{-1,{\overrightarrow{p'}}}(\Omega)$, where $\overrightarrow{p'}= (p'_1, ..., p'_N)$ and $p'_i=\dfrac{p_i}{p_i -1}$ for all $i=1, ...,,N$.

  We recall a Poincar\'{e}-type inequality. Let $u \in W^{1,{\overrightarrow{p}}}_0(\Omega)$, then for every $q\geq 1$ there exists a constant $C_p$ (depending on $q$ and $p_i$ (see \cite{ref:16}), such that
   \begin{equation}
     \lVert u\lVert_{L^q(\Omega)} \leq C_p\lVert \partial_{x_i}u\lVert_{L^{p_i}(\Omega)} \hspace*{2mm}for\hspace*{2mm} i=1,...,N.
   \end{equation}
    Moreover a Sobolev-type inequality holds. Let us denote by  $\overline{p}$ the harmonic mean of these numbers, i.e. $\frac{1}{\overline{p}}= \frac{1}{N} \sum_{i=1}^{N} \frac{1}{p_i}$. Let $u\in W^{1,{\overrightarrow{p}}}_0(\Omega)$, then there exists \cite{ref:12} a constant $C_s$ such that

     \begin{equation}
         \lVert u\lVert_{L^q(\Omega)} \leq C_s \prod_{i=1}^{N} \lVert \partial_{x_i}u\lVert_{L^{p_i}(\Omega)}^{\frac{1}{N}} ,
       \end{equation}

\noindent  where   $q= \overline{p}^*= \frac{N \overline{p}}{N-
\overline{p}}$ if $\overline{p}< N$ or  $q\in [1, +\infty[$ if
$\overline{p} \geq N.$ On the right-hand side of $(2.2)$ it is
possible to replace the geometric mean by the arithmetic mean: let
$a_1,...,a_N$ be positive numbers, it holds
   \begin{equation*}
   \prod_{i=1}^{N} a_i^{\frac{^1}{N}}\leq \dfrac{1}{N} \sum_{i=1}^{N}a_i,
   \end{equation*}

 \noindent which implies by $(2.2)$ that

   \begin{equation}
          \lVert u\lVert_{L^q(\Omega)} \leq \dfrac{C_s}{N} \sum_{i=1}^{N} \lVert \partial_{x_i}u\lVert_{L^{p_i}(\Omega)} .
        \end{equation}

  Note that when the following inequality holds  \begin{equation}
          \overline{p}< N,
        \end{equation}

 \noindent inequality $(2.3)$ implies the continuous embedding of the space $W^{1,{\overrightarrow{p}}}_0(\Omega)$ into $L^q(\Omega)$ for every $q\in [1,\overline{p}^*].$ On the other hand, the continuity of the embeding $W^{1,{\overrightarrow{p}}}_0(\Omega)\hookrightarrow L^{p_+}(\Omega)$ with $p_{+}:= max\{p_1,...,p_N\}$ relies on inequality $(2.1)$. It may happen that $\overline{p}^* <p_+$ if the exponent $p_i$ are closed enough, then $p_\infty:= max\{\overline{p}^*, p_+\}$ turns out to be the critical exponent in the anisotrpic Sobolev embedding (see \cite{ref:12}).
\begin{proposition}
If the condition $(2.4)$ holds, then for $q\in [1,p_\infty]$ there is a continuous embedding $W^{1,{\overrightarrow{p}}}_0(\Omega)\hookrightarrow L^{q}(\Omega)$. For $q<p_\infty$ the embedding is compact.
  \begin{equation}
  W^{1,{\overrightarrow{p}}}_0(\Omega)\hookrightarrow \hookrightarrow L^{q}(\Omega).
  \end{equation}
\end{proposition}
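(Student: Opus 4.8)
\medskip
\noindent\textbf{Proof plan.} The idea is to get the continuous embedding by combining the two inequalities already at our disposal, the Poincar\'e-type inequality $(2.1)$ and the Sobolev-type inequality $(2.3)$, and then to upgrade the relevant one to a compact embedding by the classical Rellich--Kondrachov theorem together with a Vitali (equi-integrability) argument.

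First I would treat the \emph{continuous} embedding. Fix $q\in[1,p_\infty]$. Since $(2.4)$ holds we have $\overline{p}^*=\frac{N\overline{p}}{N-\overline{p}}<\infty$, and $p_+<\infty$ because each $p_i<\infty$, so $p_\infty=\max\{\overline{p}^*,p_+\}<\infty$. If $q\le\overline{p}^*$, I would bound $\lVert u\lVert_{L^{\overline{p}^*}(\Omega)}$ by $\lVert u\lVert_{W^{1,\overrightarrow{p}}(\Omega)}$ using $(2.3)$ and then use the inclusion $L^{\overline{p}^*}(\Omega)\hookrightarrow L^{q}(\Omega)$, which is valid because $\Omega$ is bounded. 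If instead $\overline{p}^*<q\le p_\infty$, then necessarily $p_\infty=p_+\ge q$, and I would apply $(2.1)$ with the index $i$ realizing $p_i=p_+$ to bound $\lVert u\lVert_{L^{p_+}(\Omega)}$ by $\lVert u\lVert_{W^{1,\overrightarrow{p}}(\Omega)}$, followed again by $L^{p_+}(\Omega)\hookrightarrow L^{q}(\Omega)$. Either way $W^{1,\overrightarrow{p}}_0(\Omega)\hookrightarrow L^{q}(\Omega)$ continuously.

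Next, for the \emph{compact} embedding when $q<p_\infty$, I would take a bounded sequence $(u_n)$ in $W^{1,\overrightarrow{p}}_0(\Omega)$. Since $\Omega$ is bounded, $L^{p_i}(\Omega)\hookrightarrow L^{1}(\Omega)$ for each $i$, so $(u_n)$ is bounded in $W^{1,1}_0(\Omega)$; extending by zero to a ball containing $\Omega$ and invoking the classical Rellich--Kondrachov theorem one has $W^{1,1}_0(\Omega)\hookrightarrow\hookrightarrow L^{1}(\Omega)$, so after passing to a subsequence $u_n\to u$ in $L^{1}(\Omega)$ and, along a further subsequence, a.e.\ in $\Omega$. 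By the continuous embedding just proved $(u_n)$ is bounded in $L^{p_\infty}(\Omega)$, hence $u\in L^{p_\infty}(\Omega)$ by Fatou's lemma, and $(u_n-u)$ is bounded in $L^{p_\infty}(\Omega)$. Since $q<p_\infty<\infty$, for any measurable $E\subset\Omega$ H\"older's inequality gives $\int_E|u_n-u|^q\,dx\le|E|^{\,1-q/p_\infty}\,\lVert u_n-u\lVert_{L^{p_\infty}(\Omega)}^{\,q}$, so the family $\{|u_n-u|^q\}_n$ is equi-integrable on $\Omega$; since it also converges to $0$ a.e., Vitali's convergence theorem yields $u_n\to u$ in $L^{q}(\Omega)$. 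As every bounded sequence then has an $L^{q}$-convergent subsequence, $W^{1,\overrightarrow{p}}_0(\Omega)\hookrightarrow\hookrightarrow L^{q}(\Omega)$.

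The step I expect to be the real obstacle is the compactness in the range $\overline{p}^*\le q<p_+$, which is nonempty precisely when the $p_i$ are close enough that $\overline{p}^*<p_+$. There one cannot merely reduce to the isotropic space $W^{1,\overline{p}}_0(\Omega)$, whose embedding into $L^{q}(\Omega)$ is compact only for $q<\overline{p}^*$; the anisotropic Poincar\'e inequality $(2.1)$, which supplies the $L^{p_+}$-bound, is indispensable here, and it has to be combined with the equi-integrability argument above rather than with a plain interpolation of norms. The remaining parts are routine, given $(2.1)$, $(2.3)$ and the boundedness of $\Omega$.
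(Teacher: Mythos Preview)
The paper does not supply its own proof of this proposition; it is stated as a known fact (the inequalities $(2.1)$--$(2.3)$ are attributed to \cite{ref:12,ref:16}, and the embedding is recorded without argument). So there is nothing to compare against.

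Your proof is correct. The continuous part is exactly what the paper sketches in the paragraph preceding the proposition: $(2.3)$ handles $q\le\overline{p}^*$ and $(2.1)$ (applied with the index $i$ realizing $p_i=p_+$) handles the remaining range up to $p_+$. For the compact part, your route---drop to $W^{1,1}_0(\Omega)$ to extract an $L^1$- and a.e.-convergent subsequence, then use the uniform $L^{p_\infty}$ bound and H\"older to get equi-integrability of $|u_n-u|^q$ for $q<p_\infty$, and conclude by Vitali---is clean and entirely standard. The only comment is that the final paragraph of your plan overstates the difficulty: once you have a.e.\ convergence together with boundedness in $L^{p_\infty}$, the Vitali argument covers the whole range $q<p_\infty$ uniformly, so the subrange $\overline{p}^*\le q<p_+$ is not a genuine obstacle and does not require a separate treatment.
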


\subsection{Measures}

We start recalling the definition of ${\overrightarrow{p}}$-capacity \cite{ref:197}.

 The ${\overrightarrow{p}}$-capacity cap ${ }_{\overrightarrow{p}}(E, \Omega)$ of a compact set $E \subset \Omega$ with respect to $\Omega$ is:
$$
\operatorname{cap}_{\overrightarrow{p}}(E, \Omega)=\inf \left\{\int_{\Omega}|\nabla \varphi|^{\overrightarrow{p}}: \varphi \in  W^{1,{\overrightarrow{p}}}_0(\Omega), \varphi \geq \chi_E\right\},
$$
where $\chi_E$ is the characteristic function of $E$ (we will use the convention that $\inf \emptyset=+\infty$ ). 

If $U \subseteq \Omega$ is an open set, then we denote
$$
\operatorname{cap}_{\overrightarrow{p}}(U, \Omega)=\sup \left\{\operatorname{cap}_{\overrightarrow{p}}(E, \Omega): E \text { compact, } E \subseteq U\right\},
$$
while the $p$-capacity of any subset $B \subseteq \Omega$ is defined as:
$$
\operatorname{cap}_{\overrightarrow{p}}(B, \Omega)=\inf \left\{\operatorname{cap}_{\overrightarrow{p}}(U, \Omega): U \text { open, } B \subseteq U\right\}
$$

The Lebesgue measure of a Borel set $E \subseteq \mathbb{R}^N$ will be denoted by $\mathcal{L}^N(E)$. We denote by $\mathcal{M}_{{b}}(\Omega)$ the space of all Radon measures on $\Omega$ with bounded total variation, and $C_{{b}}(\Omega)$ as the space of all bounded, continuous functions on $\Omega$, so that $\int_{\Omega} \varphi \mathrm{d} \mu$ is defined for $\varphi \in C_{{b}}(\Omega)$ and $\mu \in \mathcal{M}_{{b}}(\Omega)$.

The positive part, the negative part, and the total variation of a measure $\mu$ in $\mathcal{M}_{\mathrm{b}}(\Omega)$ are denoted by $\mu^{+}, \mu^{-}$, and $|\mu|$, respectively.

We recall that for a measure $\mu$ in $\mathcal{M}_{{b}}(\Omega)$, and a Borel set $E \subseteq \Omega$, the restriction of $\mu$ in $E$ is the measure $\mu (\mu\lfloor E) E$ defined by $(\mu \lfloor E)(B)=\mu(E \cap B)$ for any Borel set $B \subseteq \Omega$.

We denote by $\mathcal{M}_0(\Omega)$ the set of all measures $\mu$ in $\mathcal{M}_{{b}}(\Omega)$ which satisfy $\mu(B)=0$ for every Borel set $B \subseteq \Omega$ such that $\operatorname{cap}_{\overrightarrow{p}}(B, \Omega)=0$, while $\mathcal{M}_{\mathrm{s}}(\Omega)$ will be the set of all measures $\mu$ in $\mathcal{M}_{\mathrm{b}}(\Omega)$ for which there exists a Borel set $E \subset \Omega$, with $\operatorname{cap}_{\overrightarrow{p}}(E, \Omega)=0$, and such that $\mu=\mu \lfloor E$.

We give now some results, well known for bounded open sets $\Omega$, which can be easily extended to the case of unbounded domains.

\begin{lemma}
A measure $\mu_0$ belongs to $\mathcal{M}_0(\Omega)$ if and only if it belongs to the space $L^1(\Omega)+W^{-1, {\overrightarrow{p'}}}(\Omega)$, that is there exist $f \in L^1(\Omega)$, and $F \in\left(L^{{\overrightarrow{p'}}}(\Omega)\right)^N$ such that

$$
\mu_0=f-\operatorname{div} F .
$$

Moreover

$$
\int_{\Omega} v \mathrm{~d} \mu_0=\int_{\Omega} v f \mathrm{~d} x + \int_{\Omega} D v \cdot F \mathrm{~d} x,
$$

for every $v \in W_0^{1, {\overrightarrow{p}}}(\Omega) \cap L^{\infty}(\Omega)$.
\end{lemma}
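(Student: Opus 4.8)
The statement is the classical characterization of measures that do not charge sets of zero $\vec p$-capacity, adapted to the anisotropic setting; the proof follows the scheme of Boccardo–Gallouët–Orsina for the isotropic case, so I would proceed in the two natural directions.

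For the easy implication, suppose $\mu_0 = f - \operatorname{div} F$ with $f\in L^1(\Omega)$ and $F\in (L^{\vec p{}'}(\Omega))^N$. Given a Borel set $B$ with $\operatorname{cap}_{\vec p}(B,\Omega)=0$, I would first reduce to a compact set $E\subseteq B$ (using inner regularity of the capacity as built into its definition via compact subsets) and then pick, for each $\varepsilon>0$, a test function $\varphi_\varepsilon\in W^{1,\vec p}_0(\Omega)$ with $\varphi_\varepsilon\geq\chi_E$ and $\int_\Omega|\nabla\varphi_\varepsilon|^{\vec p}<\varepsilon$; a truncation argument lets me also assume $0\le\varphi_\varepsilon\le 1$. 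Pairing $\mu_0$ against $\varphi_\varepsilon$ via the formula $\int\varphi_\varepsilon\,d\mu_0=\int f\varphi_\varepsilon\,dx+\int D\varphi_\varepsilon\cdot F\,dx$, I bound the second term by Hölder and the smallness of the anisotropic gradient norms, and the first term by dominated convergence after noting $\varphi_\varepsilon\to 0$ quasi-everywhere (hence a.e.) along a subsequence. This forces $\mu_0(E)=0$ for every compact $E$ of null capacity, hence $|\mu_0|(B)=0$ by the outer/inner regularity of the capacity, so $\mu_0\in\mathcal M_0(\Omega)$.

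For the hard implication, assume $\mu_0\in\mathcal M_0(\Omega)$. Splitting into positive and negative parts, I may take $\mu_0\geq 0$ and finite. The strategy is to regularize: let $\mu_n$ be the truncations/mollifications of $\mu_0$ (or solve approximate problems $-\sum_i \partial_i(|\partial_i u_n|^{p_i-2}\partial_i u_n)=\mu_n$ in $W^{1,\vec p}_0(\Omega)$ with $\mu_n\in W^{-1,\vec p{}'}(\Omega)$, $\mu_n\to\mu_0$ in the narrow topology and with the energy staying controlled). From the a priori estimate coming from hypothesis $(\mathbf H_1)$-type coercivity of the anisotropic $\vec p$-Laplacian, one gets $u_n$ bounded in a Marcinkiewicz/truncation sense; the key point is that the decomposition $\mu_n = f_n - \operatorname{div} F_n$ can be chosen with $(f_n)$ equi-integrable in $L^1$ and $(F_n)$ bounded in $(L^{\vec p{}'})^N$ — this is where the hypothesis $\operatorname{cap}_{\vec p}(B,\Omega)=0\Rightarrow\mu_0(B)=0$ is used, through the fact that a nonnegative measure in $\mathcal M_0$ is the weak-$*$ limit of its "good" truncations together with a Dunford–Pettis/weak-compactness argument. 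Passing to the limit yields $f\in L^1(\Omega)$ and $F\in(L^{\vec p{}'}(\Omega))^N$ with $\mu_0=f-\operatorname{div}F$, and the integral representation for $v\in W^{1,\vec p}_0(\Omega)\cap L^\infty(\Omega)$ follows by density of smooth functions and the definition of distributional divergence, the $L^\infty$ bound on $v$ handling the $L^1$ pairing.

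The main obstacle is the hard direction: specifically, producing the decomposition $\mu_0=f-\operatorname{div}F$ with the right integrability, which in the isotropic case rests on duality/truncation estimates for the $p$-Laplacian and on the structure theorem identifying $\mathcal M_0$ with $L^1+W^{-1,p'}$. In the anisotropic setting I must be careful that the relevant a priori estimates — in particular Gagliardo–Nirenberg–Sobolev with the anisotropic exponent $\overline p^{\,*}$ from $(2.2)$–$(2.3)$ and the Poincaré inequality $(2.1)$ — are strong enough to close the weak-compactness argument for $(F_n)$ in each $L^{p_i'}$ separately; the mismatch between the coordinate exponents $p_i$ and the effective exponent $\overline p$ is the delicate bookkeeping. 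Once these estimates are in place, the rest is the standard limiting procedure.
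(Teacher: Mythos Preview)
The paper's own proof is a one-line reference to Theorem~2.1 of Boccardo--Gallou\"et--Orsina \cite{ref:192}, asserting that the isotropic argument carries over with minor changes; your proposal follows exactly that scheme and in fact supplies considerably more detail than the paper does. Your sketch of the hard direction is a bit loose on the precise mechanism by which the pair $(f,F)$ is extracted---in BGO the decomposition comes from solving $-\Delta_{\vec p}\,u_n=\mu_n$ for an increasing sequence $\mu_n\in W^{-1,\vec p{}'}(\Omega)$ approximating $\mu_0$ and then reading off $F$ from the flux of the limit and $f$ from the residual, rather than from an a~priori equi-integrable choice of $(f_n,F_n)$---but the overall strategy and the identification of the anisotropic Sobolev estimates as the only place requiring adaptation are correct.
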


\begin{proof}

 Up to minor changes, the proof is the same as that proposed in Theorem 2.1 in \cite{ref:192} for bounded domains.

\end{proof}

As regards a general measure $\mu \in \mathcal{M}_{{b}}(\Omega)$, the following decomposition result holds.

\begin{theorem}
For every $\mu \in \mathcal{M}_{{b}}(\Omega)$ there exists a unique pair $\left(\mu_0, \mu_{\mathrm{s}}\right)$ such that $\mu=\mu_0+\mu_{\mathrm{s}},$  $\mu_0 \in \mathcal{M}_0(\Omega)$, $\mu_{\mathrm{s}} \in \mathcal{M}_{\mathrm{s}}(\Omega)$.

\end{theorem}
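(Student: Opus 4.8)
The plan is to mimic the classical decomposition argument of Fukushima--Sato--Taniguchi (or the version in Dal Maso--Murat--Orsina--Prignet) adapted to the $\overrightarrow{p}$-capacity. The key tool is the existence of a single ``reference'' Borel set carrying all the capacity-zero mass. First I would reduce to nonnegative measures: writing $\mu = \mu^+ - \mu^-$, it suffices to prove the statement for each of $\mu^+,\mu^-\in\mathcal{M}_b(\Omega)$, since both $\mathcal{M}_0(\Omega)$ and $\mathcal{M}_s(\Omega)$ are stable under linear combinations and the decomposition of $\mu$ is obtained by subtracting those of $\mu^\pm$; uniqueness for signed measures then follows from uniqueness for nonnegative ones. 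So assume $\mu\geq 0$.

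Next, for the construction itself I would consider the family $\mathcal{N}$ of Borel sets of zero $\overrightarrow{p}$-capacity and set $s := \sup\{\mu(B): B\in\mathcal{N}\}$. Picking a maximizing sequence $B_n\in\mathcal{N}$ with $\mu(B_n)\to s$ and letting $E := \bigcup_n B_n$, one checks that $E$ has zero $\overrightarrow{p}$-capacity — this uses countable subadditivity of $\operatorname{cap}_{\overrightarrow{p}}(\cdot,\Omega)$, which I would record as an elementary consequence of the definition (given a cover of each $B_n$ by an open set of small capacity, the union is an open set covering $E$ with capacity bounded by the sum). Since $E\supseteq B_n$ we get $\mu(E)\geq s$, hence $\mu(E)=s$ and $E$ realizes the supremum. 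Now define $\mu_s := \mu\lfloor E$ and $\mu_0 := \mu - \mu_s = \mu\lfloor(\Omega\setminus E)$. By construction $\mu_s\in\mathcal{M}_s(\Omega)$. To see $\mu_0\in\mathcal{M}_0(\Omega)$, take any Borel $B$ with $\operatorname{cap}_{\overrightarrow{p}}(B,\Omega)=0$; then $B\setminus E$ also has zero capacity, so $E\cup B\in\mathcal{N}$ and by maximality of $s$, $\mu(E\cup B)\leq s=\mu(E)$, forcing $\mu(B\setminus E)=0$; hence $\mu_0(B)=\mu((\Omega\setminus E)\cap B)=\mu(B\setminus E)=0$.

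For uniqueness, suppose $\mu = \mu_0 + \mu_s = \tilde\mu_0 + \tilde\mu_s$ are two such decompositions, with $\mu_s$ concentrated on $E$ and $\tilde\mu_s$ on $\tilde E$, both of zero capacity. Then $\mu_0 - \tilde\mu_0 = \tilde\mu_s - \mu_s$ is a measure in $\mathcal{M}_0(\Omega)$ that is concentrated on $E\cup\tilde E$, a set of zero $\overrightarrow{p}$-capacity (again by subadditivity); since a measure in $\mathcal{M}_0(\Omega)$ vanishes on capacity-zero sets, it vanishes identically, so $\mu_0=\tilde\mu_0$ and $\mu_s=\tilde\mu_s$.

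The only genuinely non-routine point — and the step I would flag as the main obstacle — is establishing the countable subadditivity (and the associated inner/outer regularity) of the anisotropic $\overrightarrow{p}$-capacity defined above, together with the fact that a measure in $\mathcal{M}_0(\Omega)$ indeed vanishes on all Borel sets of zero capacity rather than merely on the compact ones; in the isotropic case these are standard facts about the $p$-capacity, and here they should transfer with only cosmetic changes using the density of $C_0^\infty(\Omega)$ in $W^{1,\overrightarrow{p}}_0(\Omega)$ and the lattice property of the anisotropic Sobolev norm (truncations and maxima of admissible test functions remain admissible, with controlled gradient). I would either cite \cite{ref:197} for these capacity properties or include a short lemma; everything else in the proof is soft measure theory. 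The whole argument is, up to replacing $p$-capacity by $\overrightarrow{p}$-capacity, identical to the proof given in \cite{ref:33} for the isotropic case, so I would present it concisely and refer there for the details that carry over verbatim.
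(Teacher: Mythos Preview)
Your argument is correct and is precisely the standard construction due to Fukushima--Sato--Taniguchi (and reproduced in \cite{ref:33}). The paper does not give an independent proof of this theorem at all; it simply cites \cite{ref:198}, Lemma~2.1, so your proposal is in fact more detailed than the paper's treatment while following the same approach.
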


\begin{proof} 
See \cite{ref:198}, Lemma 2.1.
\end{proof}

We denote in all the rest of this work $\mathcal{M}_0(\Omega)$ by $\mathcal{M}_b^{\overrightarrow{p}}(\Omega)$
 that is called diffuse measure.

\subsection{Notation and functions}

We denote by $\mathcal{T}_0^{1, {\overrightarrow{p}}}(\Omega)$, the space of measurable functions $u: \Omega \longrightarrow \mathbb{R}$ such that for any $k>0$. $T_k(u) \in W_0^{1, {\overrightarrow{p}}}(\Omega)$.

Where  $T_k : \mathbb{R}\rightarrow \mathbb{R}$ is  the truncature operator that define as

\begin{equation*}
 \left\{
 \begin{array}{r@{~}c@{~}l}
 -k, \text{if} &r\leq -k,& \\ r, \text{if} &|r|<k,&\\ k, \text{if} &r\geq k.&
 \end{array}
 \right.
 \end{equation*}

  \noindent And for $r \in \mathbb{R},$ let $r\rightarrow r^+:= max(r,0), r \rightarrow sign_0(r)$ the usual sign function which is equal

  \begin{equation*}
  r \rightarrow sign_0(r):=
  \left\{
  \begin{array}{r@{~}c@{~}l}
  -1, &\text{on} &]-\infty,0[, \\ 1, &\text{on} &]0,\infty[,\\ 0,  &\text{if} &r=0 .
  \end{array}
  \right.
  \end{equation*}

  \begin{equation*}
   r \rightarrow sign_0^+(r):=
   \left\{
   \begin{array}{r@{~}c@{~}l}
   1, &\text{if} &r> 0, \\ 0, &\text{if} & r\leq 0.
   \end{array}
   \right.
   \end{equation*}

For any $r \in \mathbb{R}$ and any measurable function $u$ on $\Omega,[u=r],[u \leq r]$ and $[u \geq r]$ denote respectively the sets $\{x \in \Omega: u(x)=r\},\{x \in \Omega: u(x) \leq r\}$ and $\{x \in \Omega: u(x) \geq r\}$.

 Now, let us prove the following result which will be useful in the sequel.
  
  \begin{lemma}\label{L2.3}
   Let $\left(\beta_n\right)_{n \geq 1}$ be a sequence of maximal monotone graphs such that $\beta_n \rightarrow \beta$ in the sense of graphs. We consider $\left(z_n\right)_{n \geq 1}$ and $\left(w_n\right)_{n \geq 1}$ two sequences of $L^1(\Omega)$, such that $w_n \in \beta_n\left(z_n\right)$. $\mathcal{L}^N$-c.e. in $\Omega$, for any $n \in \mathbb{N}^*$. If
  $$
  \left(w_n\right)_{n \geq 1} \text { is bounded in } L^1(\Omega) \text { and } z_n \longrightarrow z \text { in } L^1(\Omega) \text {, }
  $$
  then
  $$
  z \in \operatorname{dom}(\beta) \quad \mathcal{L}^N \text {-a.e. in } \Omega \text {. }
  $$
  
  \end{lemma}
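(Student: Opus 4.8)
The plan is to argue by contradiction and reduce everything to Fatou's lemma against the $L^1$-bound on $(w_n)$. First I would pass to a subsequence, not relabeled, along which $z_n \to z$ $\mathcal{L}^N$-a.e. in $\Omega$; since there are only countably many indices, after removing a further null set we may also assume that $w_n(x) \in \beta_n(z_n(x))$ for every $n$ and every $x$ in the remaining set (in particular $z_n(x) \in \operatorname{dom}(\beta_n)$ there). Set $E = \{x \in \Omega : z(x) \notin \operatorname{dom}(\beta)\}$, which is measurable because $\operatorname{dom}(\beta)$ is an interval; the whole point is to prove $\mathcal{L}^N(E) = 0$. Note that the conclusion of the lemma concerns only the fixed limit $z$, so working along a subsequence is harmless.

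The crucial step, which I would isolate as a pointwise claim, is: if $a_n \to a$ in $\mathbb{R}$ with $a \notin \operatorname{dom}(\beta)$ and $b_n \in \beta_n(a_n)$ for each $n$, then $|b_n| \to +\infty$. Assume, say, $a > \sup \operatorname{dom}(\beta) =: M$; then $M < +\infty$, and an elementary maximality argument shows $\sup R(\beta) = +\infty$ (otherwise one could adjoin a suitable point to the graph of $\beta$, contradicting maximality). Fix any $(c,d) \in \operatorname{graph}(\beta)$ (nonempty, as $(0,0)\in\operatorname{graph}(\beta)$); by the definition of $\beta_n \to \beta$ in the sense of graphs there are $c_n \to c$, $d_n \to d$ with $d_n \in \beta_n(c_n)$, and since $c \le M < a$ we have $a_n > c_n$ for $n$ large, so monotonicity of $\beta_n$ gives $(b_n - d_n)(a_n - c_n) \ge 0$, i.e. $b_n \ge d_n$. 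Hence $\liminf_n b_n \ge d$ for every $d \in R(\beta)$, so $\liminf_n b_n \ge \sup R(\beta) = +\infty$. The case $a < \inf \operatorname{dom}(\beta)$ is symmetric. (Equivalently, one may invoke directly that graph convergence of maximal monotone operators forces the Kuratowski upper limit of the graphs to lie inside $\operatorname{graph}(\beta)$, which rules out any bounded limit of $(a_n,b_n)$.)

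Applying this claim with $a_n = z_n(x)$, $a = z(x)$, $b_n = w_n(x)$ — legitimate for a.e. $x \in E$ by the choices above — gives $|w_n(x)| \to +\infty$ for a.e. $x \in E$. If $\mathcal{L}^N(E) > 0$, Fatou's lemma yields
\[
\liminf_{n\to\infty}\int_\Omega |w_n|\,\mathrm{d}x \;\ge\; \liminf_{n\to\infty}\int_E |w_n|\,\mathrm{d}x \;\ge\; \int_E \liminf_{n\to\infty}|w_n|\,\mathrm{d}x \;=\; +\infty,
\]
contradicting $\sup_n \|w_n\|_{L^1(\Omega)} < \infty$. Therefore $\mathcal{L}^N(E) = 0$, i.e. $z \in \operatorname{dom}(\beta)$ $\mathcal{L}^N$-a.e. in $\Omega$.

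The main obstacle is the pointwise claim of the second paragraph: everything else (the countable extraction making $w_n(x)\in\beta_n(z_n(x))$ hold off a single null set, measurability of $E$, the Fatou estimate) is routine. The claim is where the hypothesis "$\beta_n \to \beta$ in the sense of graphs" is genuinely used, and the slightly delicate part is handling the endpoints of $\operatorname{dom}(\beta)$ — i.e. knowing that $R(\beta)$ is unbounded on the side where $\operatorname{dom}(\beta)$ is bounded — so that convergence of $z_n$ to a boundary value not lying in $\operatorname{dom}(\beta)$ still forces $|w_n|\to+\infty$ there.
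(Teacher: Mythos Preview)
Your argument is correct, but it follows a genuinely different route from the paper's. The paper invokes Chacon's biting lemma: from the $L^1$-bound on $(w_n)$ one extracts a subsequence $(w_{n_k})$ and a decreasing sequence of sets $E_j$ with $|E_j|\to 0$ such that $w_{n_k}\rightharpoonup w$ weakly in $L^1(\Omega\setminus E_j)$ for every $j$; combining this with $z_{n_k}\to z$ in $L^1(\Omega\setminus E_j)$ and the closedness of the limit graph $\beta$ under (strong,weak) convergence yields $w\in\beta(z)$ a.e.\ in $\Omega\setminus E_j$, hence a.e.\ in $\Omega$. Your approach sidesteps the biting lemma entirely: you pass to an a.e.\ convergent subsequence of $(z_n)$, prove the pointwise blow-up claim $|b_n|\to+\infty$ whenever $a_n\to a\notin\operatorname{dom}(\beta)$ and $b_n\in\beta_n(a_n)$, and then Fatou's lemma against the $L^1$-bound kills the bad set. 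This is more elementary and arguably more transparent; the price is that you only conclude $z\in\operatorname{dom}(\beta)$, whereas the paper's argument actually manufactures a selection $w\in L^1$ with $w\in\beta(z)$ a.e.\ --- but that extra information is not part of the stated lemma. The endpoint issue you flag (the case $a=M\notin\operatorname{dom}(\beta)$) is harmless: every $c\in\operatorname{dom}(\beta)$ then satisfies $c<a$ strictly, so your monotonicity comparison $b_n\ge d_n$ goes through unchanged, and $\sup R(\beta)=+\infty$ by the same maximality argument.
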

  
  The main tool for the proof of Lemma \ref{L2.3} is the "biting lemma of Chacon" (see \cite{ref:199}). Let us recall it.
  
  \begin{lemma}\label{L2.4} (The "biting lemma of Chocon"). Let $\Omega$ be an open bounded subset of $\mathbb{R}^N$ and $\left(f_n\right)_n$ a bounded sequence in $L^1(\Omega)$. Then, there exist $f \in L^1(\Omega)$, a subsequence $\left(f_{n_k}\right)_k$ and a sequence of measurable sets $\left(E_j\right)_j$. $E_j \subset \Omega, \forall j \in \mathbb{N}$ with $E_{j+1} \subset E_j$ and $\lim _{j \rightarrow+\infty}\left|E_j\right|=0$, such that for any $j \in \mathbb{N}, f_{n_k} \rightarrow f$ in $L^1\left(\Omega \backslash E_j\right)$.
  \end{lemma}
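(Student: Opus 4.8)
The plan is to isolate the only obstruction to weak sequential compactness in $L^1$, namely the loss of equi-integrability caused by concentration of mass, and to remove it by biting out a decreasing family of small sets; the convergence one obtains is weak convergence in $L^1(\Omega\setminus E_j)$, which is the content of Chacon's lemma. Recall that, by the Dunford--Pettis theorem, an $L^1$-bounded sequence is relatively weakly compact if and only if it is equi-integrable. I would first reduce to the case $f_n\ge 0$ by treating $f_n^{+}$ and $f_n^{-}$ separately and taking unions of the resulting bite sets. Writing $M=\sup_n\|f_n\|_{L^1(\Omega)}$ and $T_k(r)=\min(r,k)$, for each fixed $k$ the sequence $(T_k(f_n))_n$ is bounded in $L^\infty(\Omega)\subset L^2(\Omega)$, hence has a weakly convergent subsequence; a Cantor diagonalisation over $k\in\mathbb{N}$ produces a single subsequence $(f_{n_m})_m$ with $T_k(f_{n_m})\rightharpoonup \ell_k$ weakly in $L^1(\Omega)$ for every $k$. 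Since $T_k\le T_{k+1}$ and weak limits preserve order, the $\ell_k$ are nondecreasing with $\int_\Omega \ell_k\le M$, so by monotone convergence $\ell_k\to f$ in $L^1(\Omega)$ for some $f\in L^1(\Omega)$, which will be the biting limit.

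Next I would construct the nested sets $E_j$. Each function decomposes as $f_{n_m}=T_k(f_{n_m})+(f_{n_m}-k)^{+}$, where the truncated part is equi-integrable (being bounded by $k$ on a finite-measure domain), so the only source of non-equi-integrability is the tail $(f_{n_m}-k)^{+}$. Chebyshev's inequality gives $|\{f_{n_m}>\lambda\}|\le M/\lambda$ uniformly in $m$, while for each fixed $m$ one has $(f_{n_m}-k)^{+}\to 0$ in $L^1(\Omega)$ as $k\to\infty$. Choosing increasing truncation levels $k_j\to\infty$ and increasing starting indices $m_j$ by a diagonal procedure, I would take $E_j$ to be a union of high-value sets $\{f_{n_m}>\lambda\}$ beyond the index $m_j$, with the levels calibrated so that Chebyshev forces $|E_j|\to 0$ while, on the complement $\Omega\setminus E_j$, the tails $(f_{n_m}-k)^{+}$ become uniformly (in $m$) negligible as $k\to\infty$. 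Enlarging the starting indices makes the family decreasing, so $E_{j+1}\subset E_j$, $\lim_j|E_j|=0$, and $(f_{n_m})_m$ is equi-integrable on each $\Omega\setminus E_j$.

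Finally, for each fixed $j$ the sequence $(f_{n_m})_m$ is bounded in $L^1(\Omega\setminus E_j)$ and equi-integrable there, so Dunford--Pettis yields a weakly convergent subsequence in $L^1(\Omega\setminus E_j)$; since on $\Omega\setminus E_j$ the functions differ from their truncations only by tails whose weak limit vanishes, the limit must be $f$. A last diagonalisation over $j$ selects one subsequence $(f_{n_k})$ converging weakly to $f$ in $L^1(\Omega\setminus E_j)$ simultaneously for all $j$, which is the assertion. The main obstacle is exactly the construction in the second step: a set carrying concentrated mass has small Lebesgue measure (favourable for $|E_j|\to0$) yet a non-vanishing $L^1$ mass (unfavourable for equi-integrability), so the two requirements pull in opposite directions and must be reconciled by a careful diagonal calibration of the levels $k_j$ against the measures of the removed sets. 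A more conceptual alternative is to pass to the Young measure $(\nu_x)_x$ generated by $(f_n)$, identify $f$ with its barycentre $x\mapsto\int_{\mathbb{R}}\lambda\,d\nu_x(\lambda)$, and read off the bite sets from the region where mass escapes to infinity.
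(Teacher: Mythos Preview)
The paper does not prove this lemma at all: it is stated as a known result with a citation to Brooks--Chacon \cite{ref:199}, and is then used as a black box in the proof of Lemma~\ref{L2.3}. So there is no ``paper's proof'' to compare your attempt against.

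Your sketch is a correct outline of one of the standard proofs (truncation, diagonal extraction, then Dunford--Pettis on the bitten complements), and you are right to read the conclusion as \emph{weak} $L^1$ convergence on each $\Omega\setminus E_j$; note indeed that in the proof of Lemma~\ref{L2.3} the authors write $w_{n_k}\rightharpoonup w$. One small inefficiency: the final diagonalisation over $j$ is not needed. Once your first diagonal subsequence $(f_{n_m})_m$ has $T_k(f_{n_m})\rightharpoonup \ell_k$ for every $k$ and is equi-integrable on a fixed $\Omega\setminus E_j$, Dunford--Pettis gives relative weak compactness there, and the decomposition $f_{n_m}=T_k(f_{n_m})+(f_{n_m}-k)^+$ together with $\ell_k\to f$ forces every weak cluster point to equal $f$ on $\Omega\setminus E_j$; hence the whole subsequence $(f_{n_m})_m$ already converges weakly to $f$ on $\Omega\setminus E_j$ for every $j$, with no further extraction. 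The construction of the $E_j$ in your second paragraph is the genuinely delicate step and is left a bit vague; the cleanest way to make it precise is to choose levels $\lambda_j\uparrow\infty$ with $\sum_j M/\lambda_j<\infty$ and set $E_j=\bigcup_{m\ge 1}\{f_{n_m}>\lambda_j\}$ after first passing to a subsequence along which $\sup_m\|(f_{n_m}-\lambda_j)^+\|_{L^1}$ is controlled, which yields both nestedness and $|E_j|\to 0$ directly.
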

  \begin{proof}[Proof of Lemma \ref{L2.3}.]
  Since the sequence $\left(w_n\right)_{n \geq 1}$ is bounded in $L^1(\Omega)$. using the "biting lemma of Chacon" there exist $w \in L^1(\Omega)$, a subsequence $\left(w_{n_k}\right)_{k \geq 1}$ and a sequence of measurable sets $\left(E_j\right)_{j \in \mathbb{N}}$ in $\Omega$ such that $E_{j+1} \subset E_j, \forall j \in \mathbb{N}, \lim _{j \rightarrow+\infty}\left|E_j\right|=0$ and $\forall j \in \mathbb{N}, w_{n_k} \rightarrow w$ in $L^1\left(\Omega \backslash E_j\right)$. Since $z_{n_k} \longrightarrow z$ in $L^1(\Omega)$ and so in $L^1\left(\Omega \backslash E_j\right), \forall j \in \mathbb{N}$ and $\beta_{n_k} \longrightarrow \beta$ in the sense of graphs, we have $w \in \beta(z)$ a.e. in $\Omega \backslash E_j$. Thus $z \in \operatorname{dom}(\beta)$ a.e. in $\Omega \backslash E_j$. Finally we obtain $z \in \operatorname{dom}(\beta)$ a.e. in $\Omega$.
  \end{proof}

\section{Main result}

  \begin{theorem}\label{1.1}
   For any $\mu \in \mathcal{M}_b^{\overrightarrow{p}}(\Omega)$, the problem $(E,\mu)$ has at least one solution $(u, w)$ in the sense that:\\

   \begin{itemize}

     \item[$\bullet$]  $w \in L^1(\Omega), u$ is measurable, $u \in \operatorname{dom}(\beta)$   $\mathcal{L}^N$-a.e. in $\Omega, T_k(u) \in W_0^{1, {\overrightarrow{p}}}(\Omega), \forall k>0, w \in \beta(u) \mathcal{L}^N$-a.e. in $\Omega$,

      \item[$\bullet$]  there exists a measure $v \in \mathcal{M}_b(\Omega)$ such that $v \perp \mathcal{L}^N$, for any $h \in\mathcal{C}_c(\mathbb{R}), h(u) \in L^{\infty}(\Omega, d|v|)$, $h(u) v \in M_b^{\overrightarrow{p}}(\Omega)$,
      \begin{center}

       $v^{+}$is concentred on $[u=M] \cap[u \neq+\infty]$
      
       $\quad v^{-}$is concentred on $[u=m] \cap[u \neq-\infty]$
       
        \end{center}
      \begin{equation}\label{1.6}
      \int_{\Omega} a(x, D u) \cdot D(h(u) \xi) d x+\int_{\Omega} w h(u) \xi d x+\int_{\Omega} h(u) \xi d v=\int_{\Omega} h(u) \xi d \mu,
      \end{equation}
      
     \item[$\bullet$] for any $\xi \in W_0^{1, {\overrightarrow{p}}}(\Omega) \cap L^{\infty}(\Omega)$ and
      \begin{equation}\label{1.7}
      \lim _{n \rightarrow+\infty} \sum_{i=1}^{N} \int_{[n \leq |u| \leq n+1]}|D u|^{{p_i}} d x=0
      \end{equation}

     \end{itemize}

     \end{theorem}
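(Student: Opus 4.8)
The plan is to construct a solution to $(E,\mu)$ by approximation, following the strategy that is by now classical for measure data problems (Boccardo--Gallou\"et, Dal Maso--Murat--Orsina--Prignet), but adapted to the anisotropic setting and to the presence of the maximal monotone graph $\beta$. Since $\mu\in\mathcal M_b^{\overrightarrow p}(\Omega)=\mathcal M_0(\Omega)$, Lemma~2.2 gives a splitting $\mu=f-\operatorname{div}F$ with $f\in L^1(\Omega)$ and $F\in (L^{\overrightarrow{p'}}(\Omega))^N$. First I would approximate: take $f_n=T_n(f)\in L^\infty(\Omega)$, keep $F$ fixed, and replace $\beta$ by its Yosida approximation $\beta_n$ (a sequence of maximal monotone graphs with $\beta_n\to\beta$ in the sense of graphs, each single-valued and Lipschitz, with $0\in\beta_n(0)$). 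For each $n$, the regularized problem $\beta_n(u_n)-\operatorname{div}a(x,Du_n)=f_n-\operatorname{div}F$ with $u_n=0$ on $\partial\Omega$ is solvable by the results recalled from \cite{ref:L1,ref:L2} (or directly by monotonicity/pseudomonotonicity via surjectivity of Leray--Lions operators), yielding $u_n\in W_0^{1,\overrightarrow p}(\Omega)$ with $w_n:=\beta_n(u_n)\in L^1(\Omega)$.

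Next come the a priori estimates. Testing the approximate equation with $T_k(u_n)$ and using $(\mathbf H_1)$, the monotonicity $\beta_n(u_n)T_k(u_n)\ge 0$ (since $0\in\beta_n(0)$), and Young's inequality on the $F$-term, one gets $\sum_i\int_\Omega|D T_k(u_n)|^{p_i}\le C k$ uniformly in $n$, together with an $L^1$ bound on $w_n$ obtained by testing with $T_k(u_n)/k$ and letting the truncation level vary, or with a standard function like $\tfrac1k T_k(u_n)$ to capture $\int_{[|u_n|\ge k]}|w_n|$. The anisotropic Sobolev embedding (2.3)--(2.5) then upgrades the gradient truncation bound into a bound for $T_k(u_n)$ in $L^{p_\infty}$ and, by the usual Boccardo--Gallou\"et measure-estimate argument adapted to the exponents $p_i$, into equi-integrability-type control on $u_n$ and on $|Du_n|^{p_i-1}$ in suitable (anisotropic Marcinkiewicz) spaces. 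From these one extracts a subsequence with $u_n\to u$ a.e.\ and in $L^1(\Omega)$, $T_k(u_n)\rightharpoonup T_k(u)$ weakly in $W_0^{1,\overrightarrow p}(\Omega)$ for every $k$, hence $u\in\mathcal T_0^{1,\overrightarrow p}(\Omega)$, and $w_n\rightharpoonup$ (some measure) in $\mathcal M_b(\Omega)$. A crucial diagnostic step is then to apply Lemma~\ref{L2.3} with $z_n=u_n$, $w_n\in\beta_n(u_n)$: the $L^1$ bound on $w_n$ and $L^1$ convergence of $u_n$ give $u\in\operatorname{dom}(\beta)$ $\mathcal L^N$-a.e.\ in $\Omega$, which is exactly the first bullet. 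To identify $w\in\beta(u)$ a.e.\ one combines a.e.\ convergence of $u_n$ with the Minty-type closedness of the graph $\beta$ after establishing that the absolutely continuous part of the weak-$\ast$ limit of $w_n$ is the relevant $L^1$ function; the singular part of that limit is the measure $v$, and testing with functions supported near $[u=M]$ resp.\ $[u=m]$ shows $v^+$ is concentrated on $[u=M]\cap[u\ne+\infty]$ and $v^-$ on $[u=m]\cap[u\ne-\infty]$.

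The heart of the proof — and the step I expect to be the main obstacle — is the strong convergence of the truncated gradients, i.e.\ $D T_k(u_n)\to D T_k(u)$ in $\prod_i L^{p_i}$ (at least in measure), which is needed to pass to the limit in the nonlinear term $\int_\Omega a(x,Du)\cdot D(h(u)\xi)\,dx$ and to prove the decay estimate \eqref{1.7}. The standard device is to test with $\varphi_n=T_\eta\big(u_n-T_k(u)\big)$ (or a Landes--Mustonen / Boccardo--Murat type test function), use $(\mathbf H_3)$ to get $\int(a(x,Du_n)-a(x,DT_k(u)))\cdot D(u_n-T_k(u))\to 0$ on the set $[|u_n|\le k]$, and then invoke the monotonicity trick to deduce a.e.\ convergence of the gradients; the anisotropy forces one to handle each coordinate derivative $D^i$ with its own exponent $p_i$ and to be careful with the $F$-term and the $\beta_n(u_n)\varphi_n$ term (here one uses that $\varphi_n$ has the same sign as convenient, or absorbs it using the $L^1$ bound on $w_n$ and smallness of $\eta$). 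Once strong gradient convergence is in hand, passing to the limit in the weak formulation tested against $h(u)\xi$ with $h\in\mathcal C_c(\mathbb R)$, $\xi\in W_0^{1,\overrightarrow p}(\Omega)\cap L^\infty(\Omega)$ gives \eqref{1.6}, and choosing $h$ to be a cutoff at level $n$ and letting $n\to\infty$ (together with the uniform gradient-truncation bound $\sum_i\int_{[|u_n|\le k]}|Du_n|^{p_i}\le Ck$, which implies $\sum_i\int_{[n\le|u_n|\le n+1]}|Du_n|^{p_i}$ is the increment of a bounded nondecreasing quantity) yields \eqref{1.7}. I would also record that $h(u)v\in\mathcal M_b^{\overrightarrow p}(\Omega)$ and $h(u)\in L^\infty(\Omega,d|v|)$ follow from the concentration properties of $v$ and the fact that $h$ is continuous with compact support, finite at the points $m,M$ where $v$ lives.
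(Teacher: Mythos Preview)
Your overall architecture matches the paper's almost exactly: decompose $\mu=f-\operatorname{div}F$, regularize $f$ by truncation and $\beta$ by its Yosida approximation $\beta_\varepsilon$, solve the approximate problems, derive the truncated gradient estimate $\sum_i\int_{[|u_\varepsilon|\le k]}|Du_\varepsilon|^{p_i}\le Ck$ and the $L^1$ bound on $\beta_\varepsilon(u_\varepsilon)$ by testing with $T_k(u_\varepsilon)$, extract $u$ via Cauchy-in-measure, apply Lemma~\ref{L2.3} to get $u\in\operatorname{dom}\beta$, prove strong convergence of $DT_k(u_\varepsilon)$ by a Landes-type test function and the monotonicity trick, and pass to the limit against $h(u_\varepsilon)\xi$. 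That is the paper's route.

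There are two places where your sketch is genuinely thinner than what is needed. First, the identification of $w$ and of the concentration sets of $v^\pm$: saying ``Minty-type closedness of the graph $\beta$'' and ``testing with functions supported near $[u=M]$'' does not work as stated, because $w_n$ only converges weak-$*$ in $\mathcal M_b(\Omega)$, not weakly in $L^1$, and you have no a priori reason that the singular part avoids $[m<u<M]$. The paper's device is to write $\beta=\partial j$ for a convex l.s.c.\ $j$, pass to the limit in the subdifferential inequality $j(t)\ge j_\varepsilon(u_\varepsilon)+(t-u_\varepsilon)\beta_\varepsilon(u_\varepsilon)$ tested against $h(u_\varepsilon)\xi$, and then split the resulting measure inequality into its absolutely continuous part (giving $w\in\partial j(u)$) and its singular part (giving $(t-u)h(u)v\le 0$ for all $t\in\overline{\operatorname{dom}j}$, whence the concentration of $v^\pm$ on $[u=M]$, $[u=m]$). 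This $\partial j$ step is the missing idea in your outline. Second, your argument for \eqref{1.7} is off: $\sum_i\int_{[|u|\le k]}|Du|^{p_i}\le Ck$ gives a quantity bounded by $Ck$, not a bounded one, so ``increments of a bounded nondecreasing quantity'' does not apply. The paper instead tests the approximate equation with $T_1(u_\varepsilon-T_n(u_\varepsilon))$ and shows directly that the right-hand side tends to $0$ as $\varepsilon\to0$ and then $n\to\infty$, using the $L^1$ convergence of $f_\varepsilon$ and the smallness of $\int_{[|u_\varepsilon|>n]}|F|^{p_i'}$.
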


     \begin{remark}

  \item[$\ast$] Since $T_k(u) \in W_0^{1, {\overrightarrow{p}}}(\Omega)$, without abusing we are identifying the function $u$ with its quasicontinuous representation. So, since the measures $\mu$ and $v$ are diffuse, all the terms of Theorem \ref{1.1} have a sense.
  
  \item[$\ast$] If $M=+\infty$ (resp. $m=-\infty$ ), then $v^{+} \equiv 0$ (resp. $v^{-} \equiv 0$ ). In the particular case where the domain of $\beta$ is equal to $\mathbb{R}$, Theorem \ref{1.1} implies the existence of a renormalized solution in the standard sense. More precisely, we have
  
    \end{remark}
  
  \begin{corollary}\label{C1.1}
  Assume that
  $$
  \mathcal{D}(\beta)=\mathbb{R},
  $$
  for any $\mu \in \mathcal{M}_b^{\overrightarrow{p}}(\Omega)$, the problem $(E,\mu)$ hos at least one solution $(u, w)$ in the sense that $w \in L^1(\Omega), u$ is meosurable, $T_k(u) \in W_0^{1, {\overrightarrow{p}}}(\Omega), \forall k>0, w \in \beta(u) \mathcal{L}^N$-c.e. in $\Omega$, for any $h \in \mathcal{C}_c(\mathbb{R})$, we have
  \begin{equation}\label{1.8}
  \int_{\Omega} a(x, D u) \cdot D(h(u) \xi) d x+\int_{\Omega} w h(u) \xi d x=\int_{\Omega} h(u) \xi d \mu,
  \end{equation}
  
  for any $\xi \in W_0^{1, {\overrightarrow{p}}}(\Omega) \cap L^{\infty}(\Omega)$ and
  \begin{equation}\label{1.9}
  \lim _{n \rightarrow+\infty}\sum_{i=1}^{N} \int_{[n \leq|u| \leq n+1]}|D u|^{{p_i}} d x=0.
  \end{equation}

  \end{corollary}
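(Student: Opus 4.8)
The plan is to obtain Corollary \ref{C1.1} as the degenerate case $m=-\infty$, $M=+\infty$ of Theorem \ref{1.1}, so the argument is a short bookkeeping verification rather than a new proof. First I would apply Theorem \ref{1.1} to the given $\mu\in\mathcal{M}_b^{\overrightarrow{p}}(\Omega)$: this produces a pair $(u,w)$ with $w\in L^1(\Omega)$, $u$ measurable, $u\in\operatorname{dom}(\beta)$ $\mathcal{L}^N$-a.e., $T_k(u)\in W_0^{1,{\overrightarrow{p}}}(\Omega)$ for all $k>0$, $w\in\beta(u)$ $\mathcal{L}^N$-a.e., together with a measure $v\in\mathcal{M}_b(\Omega)$ with $v\perp\mathcal{L}^N$ satisfying \eqref{1.6}, \eqref{1.7}, and the concentration properties of $v^{\pm}$. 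Since $\operatorname{int}(\operatorname{dom}\beta)=(m,M)$ by definition, the hypothesis $\mathcal{D}(\beta)=\mathbb{R}$ forces $m=-\infty$ and $M=+\infty$; in particular the requirement $u\in\operatorname{dom}(\beta)$ $\mathcal{L}^N$-a.e. is then automatic and disappears from the statement.

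The key (and essentially only) point is the vanishing of $v$. By Theorem \ref{1.1}, $v^{+}$ is concentrated on $[u=M]\cap[u\neq+\infty]$; with $M=+\infty$ this set is $[u=+\infty]\cap[u\neq+\infty]=\emptyset$, hence $v^{+}=0$. Symmetrically, $v^{-}$ is concentrated on $[u=m]\cap[u\neq-\infty]=[u=-\infty]\cap[u\neq-\infty]=\emptyset$, so $v^{-}=0$, and therefore $v=v^{+}-v^{-}=0$. (The auxiliary conditions $h(u)\in L^{\infty}(\Omega,d|v|)$ and $h(u)v\in M_b^{\overrightarrow{p}}(\Omega)$ then hold trivially.)

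Finally I would substitute $v=0$ into \eqref{1.6}: the term $\int_{\Omega}h(u)\xi\,dv$ drops out and \eqref{1.6} reduces to \eqref{1.8}, valid for every $h\in\mathcal{C}_c(\mathbb{R})$ and every $\xi\in W_0^{1,{\overrightarrow{p}}}(\Omega)\cap L^{\infty}(\Omega)$, while \eqref{1.7} is verbatim \eqref{1.9}. Collecting the properties listed above gives all the assertions of the corollary. Since every step is a direct specialization of Theorem \ref{1.1}, I do not expect any genuine obstacle; the only care needed is the elementary observation that the concentration sets for $v^{\pm}$ become empty once $m=-\infty$ and $M=+\infty$.
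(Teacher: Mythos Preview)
Your proposal is correct and is exactly the paper's approach: the paper does not give a separate proof of the corollary but notes in the preceding remark that if $M=+\infty$ (resp.\ $m=-\infty$) then $v^{+}\equiv 0$ (resp.\ $v^{-}\equiv 0$), so that under $\mathcal{D}(\beta)=\mathbb{R}$ the measure $v$ in Theorem~\ref{1.1} vanishes and \eqref{1.6}--\eqref{1.7} reduce to \eqref{1.8}--\eqref{1.9}. Your explicit computation of the concentration sets $[u=M]\cap[u\neq+\infty]=\emptyset$ when $M=+\infty$ (and symmetrically for $m$) is precisely the justification of that remark.
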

  
The proof of uniqueness follows by the using the entropic formulation of the solution.

  \begin{theorem}\label{1.2}
  If $(u, w)$ is a solution of $(E,\mu)$, then $(u, w)$ is a solution in the following sense: for any $\xi \in W_0^{1, {\overrightarrow{p}}}(\Omega) \cap L^{\infty}(\Omega)$ such that $\xi \in \operatorname{dom} \beta$.
  \begin{equation}\label{1.10}
  \int_{\Omega} a(x, D u) \cdot D T_k(u-\xi) d x+\int_{\Omega} w T_k(u-\xi) d x \leq \int_{\Omega} T_k(u-\xi) d \mu, \quad \text { for any } k>0 .
  \end{equation}
  
  \end{theorem}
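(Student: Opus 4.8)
The plan is to pass from the ``renormalized-type'' formulation \eqref{1.6}--\eqref{1.7} to the entropy inequality \eqref{1.10} by a careful choice of test function. First I would fix $k>0$ and $\xi \in W_0^{1,\vec p}(\Omega)\cap L^\infty(\Omega)$ with $\xi\in\operatorname{dom}\beta$, and for $n$ large enough I would use in \eqref{1.6} the test function of the form $h(u)=h_n(u)$, where $h_n$ is a cut-off in $\mathcal C_c(\mathbb R)$ equal to $1$ on $[-n,n]$, affine on $[n,n+1]\cup[-n-1,-n]$, and supported in $[-n-1,n+1]$; and as the factor $\xi$ in \eqref{1.6} I would take $\frac1k T_k(u-\xi)$, which does belong to $W_0^{1,\vec p}(\Omega)\cap L^\infty(\Omega)$ since $T_k(u)\in W_0^{1,\vec p}(\Omega)$, $\xi$ is truncated in $\operatorname{dom}\beta$, and $T_k(u-\xi)$ coincides with $T_k(T_{k+\|\xi\|_\infty}(u)-\xi)$. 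This produces, after multiplying by $k$,
\begin{equation*}
\int_\Omega a(x,Du)\cdot D\bigl(h_n(u)\,T_k(u-\xi)\bigr)\,dx+\int_\Omega w\,h_n(u)\,T_k(u-\xi)\,dx+\int_\Omega h_n(u)\,T_k(u-\xi)\,dv=\int_\Omega h_n(u)\,T_k(u-\xi)\,d\mu .
\end{equation*}

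Next I would expand the diffusion term via the Leibniz rule as
$\int_\Omega h_n(u)\,a(x,Du)\cdot DT_k(u-\xi)\,dx+\int_\Omega T_k(u-\xi)\,h_n'(u)\,a(x,Du)\cdot Du\,dx$,
and control the second piece. On the support of $h_n'$ we have $n\le|u|\le n+1$, so $|h_n'(u)\,a(x,Du)\cdot Du\,T_k(u-\xi)|\le k\sum_i |a_i(x,Du)|\,|D_iu|$ there; using $(\mathbf H_2)$, Young's inequality and then the energy-vanishing condition \eqref{1.7}, together with $\|d_i\|_{L^{p_i'}([n\le|u|\le n+1])}\to0$ by dominated convergence, this term tends to $0$ as $n\to\infty$. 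For the first diffusion piece, since $h_n(u)\to1$ boundedly and, by \eqref{1.7} again, $a(x,Du)\cdot DT_k(u-\xi)=a(x,Du)\cdot DT_k(u-\xi)\,\mathbf 1_{[|u|\le k+\|\xi\|_\infty]}$ is integrable (it is supported where $|u|$ is bounded, and there $Du=DT_{k+\|\xi\|_\infty}(u)\in(L^{p_i})$, while $a(x,Du)\in(L^{p_i'})$ by $(\mathbf H_2)$), dominated convergence gives convergence to $\int_\Omega a(x,Du)\cdot DT_k(u-\xi)\,dx$. The term with $w$: since $w\in L^1(\Omega)$, $h_n(u)T_k(u-\xi)\to T_k(u-\xi)$ boundedly in $L^\infty$, so that integral converges to $\int_\Omega w\,T_k(u-\xi)\,dx$; similarly the right-hand side converges to $\int_\Omega T_k(u-\xi)\,d\mu$ since $\mu$ is a bounded (diffuse) measure, $u$ is identified with its quasicontinuous representative, and $h_n(u)T_k(u-\xi)$ is bounded and converges $\mathrm{cap}_{\vec p}$-quasi-everywhere.

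The only term that does not simply pass to a limit is the singular one $\int_\Omega h_n(u)\,T_k(u-\xi)\,dv$, and producing the inequality in \eqref{1.10} (rather than an equality) comes precisely from the sign of this term. Here I would use the concentration properties: $v^+$ is concentrated on $[u=M]$ and $v^-$ on $[u=m]$, while $\xi\in\operatorname{dom}\beta$ a.e., hence $\xi\le M$ and $\xi\ge m$ a.e. On $[u=M]$ one has $u-\xi=M-\xi\ge0$, so $T_k(u-\xi)\ge0$ there; on $[u=m]$, $u-\xi=m-\xi\le0$, so $T_k(u-\xi)\le0$ there. Since $h_n(u)\ge0$, we get $\int_\Omega h_n(u)T_k(u-\xi)\,dv^+\ge0$ and $-\int_\Omega h_n(u)T_k(u-\xi)\,dv^-\ge0$, i.e. $\int_\Omega h_n(u)T_k(u-\xi)\,dv\ge0$; discarding this nonnegative term from the left-hand side turns the equality into the desired inequality \eqref{1.10}. (For $n$ large, $h_n\equiv1$ near $M$ and near $m$ when these are finite, so no difficulty arises; when $M=+\infty$ or $m=-\infty$ the corresponding part of $v$ vanishes.)

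The main obstacle I anticipate is the rigorous justification that the test function $\tfrac1k\,h_n(u)\,T_k(u-\xi)$ is admissible in \eqref{1.6} and that all the limit passages above are legitimate — in particular, the integrability of $a(x,Du)\cdot DT_k(u-\xi)$ and the control of the $h_n'$-term, both of which rest entirely on the energy-decay estimate \eqref{1.7}; and, on the measure side, the fact that convergence of $h_n(u)T_k(u-\xi)$ against the diffuse measures $\mu$ and $v$ requires working with quasicontinuous representatives and invoking that bounded sequences converging quasi-everywhere and weak-$*$ pass to the limit against measures in $\mathcal M_b^{\vec p}(\Omega)$. Once these technical points are in place, the sign argument for the singular term is the short conceptual heart of the proof.
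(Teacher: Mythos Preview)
Your proposal is correct and follows essentially the same route as the paper: take $h=h_n$ (the piecewise-linear cut-off) and test function $T_k(u-\xi)$ in \eqref{1.6}, expand $D(h_n(u)T_k(u-\xi))$, kill the $h_n'$-term via the energy decay \eqref{1.7}, let $n\to\infty$ in the remaining terms by dominated convergence, and finally drop the nonnegative singular term $\int_\Omega T_k(u-\xi)\,dv$ using the concentration of $v^\pm$ on $[u=M]$ and $[u=m]$ together with $\xi\in\operatorname{dom}\beta$. The only cosmetic difference is that the paper first passes to the limit with a generic $\varphi\in W_0^{1,\vec p}(\Omega)\cap L^\infty(\Omega)$ and then substitutes $\varphi=T_k(u-\xi)$, whereas you insert $T_k(u-\xi)$ from the outset; the arguments are otherwise identical.
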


  In the case where the domain of $\beta$ is bounded, we believe that it could be also useful for the proof of uniqueness in a more general setting like those of \cite{ref:33} and \cite{ref:193}. And the renormalization with the function $h$ is not necessary in Theorem \ref{1.1}. We can take $h \equiv 1$. Moreover, by using Theorem \ref{1.2} we have uniqueness. This is summarize in the following theorem.

  \begin{theorem}\label{1.3}
  If $-\infty<m \leq 0 \leq M<\infty$, then, for any $\mu \in \mathcal{M}_b^{\overrightarrow{p}}(\Omega)$, the problem $(E,\mu)$ has a unique solution $(u, w)$ in the sense that $(u, w) \in W_0^{1, {\overrightarrow{p}}}(\Omega) \times L^1(\Omega), u \in \operatorname{dom}(\beta)$     $\mathcal{L}^N$-a.e. in $\Omega, w \in \beta(u)$  $\mathcal{L}^N$-a.e. in $\Omega$, there exists $v \in \mathcal{M}_b^{{\overrightarrow{p}}}(\Omega)$ such that $v \perp \mathcal{L}^N$.
  $$
  v^{+} \text {is concentred on }[u=M], \quad v^{-} \text {is concentred on }[u=m]
  $$
  and
  \begin{equation}\label{1.11}
  \int_{\Omega} a(x, D u) \cdot D \xi d x+\int_{\Omega} w \xi d x+\int_{\Omega} \xi d v=\int_{\Omega} \xi d \mu,
  \end{equation}

  for any $\xi \in W_0^{1, {\overrightarrow{p}}}(\Omega) \cap L^{\infty}(\Omega)$. Moreover

  \begin{equation}\label{1.12}
  v^{+} \leq \mu_s \lfloor [u=M]
  \end{equation}
  
  and

  \begin{equation}\label{1.13}
  v^{-} \leq -\mu_s \lfloor [u=m].
  \end{equation}
  
  \end{theorem}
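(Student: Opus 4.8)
\textbf{Proof proposal for Theorem \ref{1.3}.}

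The plan is to treat existence and uniqueness separately, using Theorem \ref{1.1} and Theorem \ref{1.2} as black boxes. For existence, start from the solution $(u,w)$ produced by Theorem \ref{1.1}. The point is that when $-\infty<m\le 0\le M<\infty$ the whole construction collapses to a much cleaner object: since $\operatorname{dom}(\beta)\subset[m,M]$ and $u\in\operatorname{dom}(\beta)$ a.e., one has $|u|\le \max(|m|,|M|)=:k_0$ a.e., hence $u=T_{k_0}(u)\in W^{1,\overrightarrow p}_0(\Omega)$ and $Du$ is a genuine $L^{\overrightarrow p}$ gradient. In particular the set $[|u|\ge n]$ is empty for $n\ge k_0$, so \eqref{1.7} holds trivially and \eqref{1.6} with any $h\in\mathcal C_c(\mathbb R)$ that equals $1$ on $[-k_0,k_0]$ reduces, by $h(u)\equiv1$, to \eqref{1.11}. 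Thus the renormalization plays no role, we may take $h\equiv1$, and the concentration statements of Theorem \ref{1.1} simplify: $[u=M]\cap[u\ne+\infty]=[u=M]$ because $u$ is finite, and likewise for the negative part. This gives the existence half together with the structural claims on $v$.

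For the bounds \eqref{1.12}--\eqref{1.13}, I would go back into the approximation scheme used to prove Theorem \ref{1.1} (approximating $\mu$ by $\mu_0+\mu_s^n$ with regularized singular part, and $\beta$ by Yosida-type $\beta_n$), and track where $v$ is born: it is the weak-$*$ limit of $w_n-\mu_0$-type remainders, or more precisely the limit of $\beta_n(u_n)-$(the part absorbed into the right side). The key inequality is that along the approximation the positive part of the "excess" is controlled by $\mu_s^+$ restricted to the level set where saturation occurs. Concretely, testing the approximate equation with $T_\varepsilon(u_n-M)^+/\varepsilon$-type functions and passing to the limit should yield $\int_\Omega \varphi\,dv^+\le \int_{[u=M]}\varphi\,d\mu_s$ for every nonnegative $\varphi\in\mathcal C_c(\Omega)$, which is exactly \eqref{1.12}; \eqref{1.13} is symmetric. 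This requires knowing that $v^+$ and $\mu_s^+\lfloor[u=M]$ are both concentrated on $[u=M]$ (the first by Theorem \ref{1.1}, the second is a choice one can make since outside $[u=M]$ there is no obstruction), so that the comparison only has to be checked there.

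Uniqueness is the cleanest part: suppose $(u_1,w_1,v_1)$ and $(u_2,w_2,v_2)$ are two solutions. By Theorem \ref{1.2}, each $u_i$ satisfies the entropy inequality \eqref{1.10}. Since $-\infty<m\le0\le M<\infty$, both $u_1$ and $u_2$ lie in $W^{1,\overrightarrow p}_0(\Omega)\cap L^\infty(\Omega)$ with values in $\operatorname{dom}\beta$, so each is an admissible test function in the other's entropy inequality. Take $\xi=u_2$ in the inequality for $u_1$ and $\xi=u_1$ in the inequality for $u_2$, add, let $k\to0$ after dividing by $k$ (i.e.\ use $T_k(u_1-u_2)/k\to\operatorname{sign}_0(u_1-u_2)$), and use monotonicity $(\mathbf H_3)$ of $a$ to kill the principal-part difference and monotonicity of $\beta$ to handle $\int(w_1-w_2)\operatorname{sign}_0(u_1-u_2)\ge0$; the measure terms drop because $\mu$ is the same. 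This forces $\int_\Omega a(x,Du_1)\cdot D\operatorname{sign}_0(u_1-u_2)\,dx\le0$, hence (via $(\mathbf H_1)$, strict monotonicity on the set $[u_1\ne u_2]$ after a Stampacchia-type argument on $D(u_1-u_2)$ restricted to that set) $D(u_1-u_2)=0$ a.e.\ on $[u_1\ne u_2]$, which combined with the zero boundary trace gives $u_1=u_2$; then $w_1=w_2$ from the equation and $v_1=v_2$ follows.

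The main obstacle I expect is the bound \eqref{1.12}--\eqref{1.13}: it is the only place where one cannot simply invoke Theorems \ref{1.1} and \ref{1.2} but must re-enter the approximation argument and quantify how much of the singular part $\mu_s$ ends up as the defect measure $v$ versus being ``seen'' by the truncated solutions. The delicate point is the anisotropic setting, where the natural test functions for isolating $v^+$ on $[u=M]$ interact with the $N$ different exponents $p_i$; one must make sure the limit passage in $\sum_i\int a_i(x,Du_n)D_i(\text{test})$ is uniform, which is where hypotheses $(\mathbf H_1)$ and $(\mathbf H_2)$ together with the a priori estimate \eqref{1.7} (now trivial) do the work. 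The uniqueness argument, by contrast, is standard once one observes that boundedness of $\operatorname{dom}\beta$ makes every solution an admissible test function.
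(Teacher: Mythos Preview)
Your existence argument matches the paper exactly: bounded $\operatorname{dom}(\beta)$ forces $u$ bounded, hence $u=T_{k_0}(u)\in W_0^{1,\overrightarrow p}(\Omega)$, and taking $h\equiv1$ on $[-k_0,k_0]$ in \eqref{1.6} yields \eqref{1.11}.

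For uniqueness you have the right setup (symmetric testing in the entropy inequality \eqref{1.10}) but you then take a wrong turn. After adding the two inequalities you already have
\[
\int_\Omega\bigl(a(x,Du_1)-a(x,Du_2)\bigr)\cdot DT_k(u_1-u_2)\,dx+\int_\Omega(w_1-w_2)T_k(u_1-u_2)\,dx\le 0,
\]
and both terms are nonnegative by $(\mathbf H_3)$ and monotonicity of $\beta$, so each vanishes for every fixed $k>0$. There is no need, and no meaning, to ``divide by $k$ and let $k\to0$'': the object $D\operatorname{sign}_0(u_1-u_2)$ is not a function, and the principal part does not pass to that limit. The paper simply reads off from the vanishing principal part that $Du_1=Du_2$ a.e.\ (hence $u_1-u_2$ constant, hence zero by the boundary condition), then subtracts the two copies of \eqref{1.11} and uses uniqueness of the Radon--Nikodym decomposition to get $w_1=w_2$ and $v_1=v_2$.

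For \eqref{1.12}--\eqref{1.13} you propose re-entering the approximation scheme. The paper avoids this entirely with a short direct lemma (Lemma~\ref{L4.1}): if $\eta\in W_0^{1,\overrightarrow p}(\Omega)$ satisfies $\eta\le\lambda$ a.e.\ and $Z=-\operatorname{div}a(x,D\eta)$ in $\mathcal D'(\Omega)$ with $Z\in\mathcal M_b^{\overrightarrow p}(\Omega)$, then $\int_{[\eta=\lambda]}\xi\,dZ\ge0$ for every $0\le\xi\in C_c^1(\Omega)$. The proof tests the equation for $u$ itself (not $u_\varepsilon$) with $\varphi_n(u)\xi$, where $\varphi_n(r)=\min\{1,(nr-n\lambda+1)^+\}$ concentrates on $[u=\lambda]$ as $n\to\infty$; the cross term $\varphi_n'(u)\,a(x,Du)\cdot Du\ge0$ is dropped, and the remaining term vanishes in the limit. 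Applying this with $\eta=u$, $\lambda=M$ and $Z=\mu-w\mathcal L^N-v$ gives
\[
\int_{[u=M]}\xi\,dv^+\le\int_{[u=M]}\xi\,d\mu-\int_{[u=M]}\xi w\,dx,
\]
whence \eqref{1.12}; \eqref{1.13} is symmetric. This is considerably shorter than tracking defect measures through the Yosida approximation, and it works in the anisotropic setting without any of the uniformity worries you flag. Note also that $\mu_s$ here is the Lebesgue-singular part of $\mu$, not a capacity-singular part (which is zero since $\mu\in\mathcal M_b^{\overrightarrow p}(\Omega)$); your sketch seems to conflate the two.
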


  \begin{remark}\label{R1.1}

  1. If $-\infty<m \leq 0 \leq M<\infty$, for any $\mu \in \mathcal{M}_b^{\overrightarrow{p}}(\Omega)$, the good measure with respect to the notion of weak solution associated with $\mu$ is given by
  $$
  \mu^*=\mu-v .
  $$
  2. Assuming that $M<\infty$ (resp. $-\infty<m$ ) and $\overline{\mathcal{D}(\beta)}=(-\infty, M]$ (resp. $\overline{\mathcal{D}(\beta)}=[m, \infty)$ ). we can prove also that, if $(u, w)$ is a solution in the sense of Theorem 1.1, then
  $$
  v^{+} \leq \mu_s\left\lfloor[u=M] \text { and } v^{-} \equiv 0\right.
  $$
  (resp.
  $$
  v^{-} \leq-\mu_s\left\lfloor[u=m] \text { and } v^{+} \equiv 0\right) .
  $$
  3. If the measure $\mu$ is regular (i.e. absolutely continuous with respect to Lebesgue measure), Theorem \ref{1.3} and the previous Remark show that $\nu=0$ and a solution in the sense of Theorem \ref{1.1} coincides with the usual renormalized solution for $(E,\mu)$, which corresponds to the unique weak solution in the case where $\mathcal{D}(\beta)$ is bounded.
  
  \end{remark}

  Notice that this kind of formulation for the solution has already appeared in previous work, for instance in \cite{ref:202, ref:203} to deal with nonlinearities $\beta$ depending on $x$. It appeared also in \cite{ref:204} to treat the obstacle problem; i.e. the case where $\mathcal{D}(\beta)=[\mathrm{m}, M]$. Our results here are some kind of generalization of the last result to general nonlinearity $\beta$ and Leray-Lions operator $a$ \cite{ref:15}.

  \section{ Proof of Theorem \ref{1.1}}

  If $(u, w)$ is a solution of $(E,\mu)$, choosing $\xi=T_k(u), k>0$ in (\ref{1.11}), we get the following estimate:
    \begin{equation}\label{2.1}
    \forall k>0, \quad \frac{1}{k} \sum_{i=1}^{N} \int_{[|u|<k]}|D u|^{{p_i}} d x \leq K,
    \end{equation}
    
    with $0<K<+\infty$.
    
     The proof of the following two lemmas may be found in \cite{ref:L2, ref:4}.
    
    \begin{lemma}\label{L2.1}
    
    Let $1<{\overrightarrow{p}}<N, \Omega$ be as above ond let $u \in \mathcal{T}_0^{1, {\overrightarrow{p}}}(\Omega)$ be such that (\ref{2.1}) holds. Then there exists $C=C(N, p)>0$ such that
    \begin{equation}\label{2.2}
    \operatorname{meas}([|u|>k]) \leq C K^{\frac{N}{N-\overline{p}}} k^{-p_1}
    \end{equation}
    with $p_1=\frac{N(\overline{p}-1)}{N-\overline{p}}$.
    
    \end{lemma}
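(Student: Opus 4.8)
The plan is to establish the measure–decay estimate \eqref{2.2} by combining the a priori energy bound \eqref{2.1} with the anisotropic Sobolev embedding \eqref{2.2} (the one we labelled $(2.3)$ in the preliminaries, giving $\lVert v\lVert_{L^{\overline p^*}(\Omega)} \le \frac{C_s}{N}\sum_i \lVert \partial_{x_i}v\lVert_{L^{p_i}(\Omega)}$) applied to the truncation $v=T_k(u)$. First I would fix $k>0$, set $v=T_k(u)\in W^{1,\overrightarrow p}_0(\Omega)$, and apply $(2.3)$ to $v$. On the sublevel set $[|u|<k]$ one has $\partial_{x_i} v = \partial_{x_i} u$, while $\partial_{x_i} v = 0$ on $[|u|\ge k]$, so $\sum_i \int_\Omega |\partial_{x_i}v|^{p_i}\,dx = \sum_i \int_{[|u|<k]}|Du|^{p_i}\,dx \le Kk$ by \eqref{2.1}. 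Hence for each $i$, $\lVert\partial_{x_i}v\lVert_{L^{p_i}(\Omega)} \le (Kk)^{1/p_i}$.

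The delicate point is how to pass from these per-index bounds to a single clean power of $k$ on the right-hand side of $(2.3)$. Since $1<\overline p \le p_i$ for every $i$ and $Kk$ may be taken $\ge 1$ (if not, the estimate is trivial for small $k$, or one absorbs the constant), one has $(Kk)^{1/p_i} \le (Kk)^{1/\overline p}$, so $\sum_{i=1}^N\lVert\partial_{x_i}v\lVert_{L^{p_i}(\Omega)} \le N (Kk)^{1/\overline p}$, giving $\lVert T_k(u)\lVert_{L^{\overline p^*}(\Omega)} \le C_s (Kk)^{1/\overline p}$. This is exactly the step I expect to require the most care: tracking which norm controls which, and making sure the constant $C=C(N,p)$ absorbs $C_s$ and the interplay of the exponents $p_i$ with $\overline p$, rather than depending on $k$ or on $u$.

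Next I would convert the $L^{\overline p^*}$ bound on $T_k(u)$ into a measure estimate by a Chebyshev (Markov) inequality: on the set $[|u|>k]$ one has $|T_k(u)| = k$, so
\begin{equation*}
k^{\overline p^*}\,\operatorname{meas}([|u|>k]) \le \int_{\Omega}|T_k(u)|^{\overline p^*}\,dx = \lVert T_k(u)\lVert_{L^{\overline p^*}(\Omega)}^{\overline p^*} \le \bigl(C_s (Kk)^{1/\overline p}\bigr)^{\overline p^*}.
\end{equation*}
Rearranging, $\operatorname{meas}([|u|>k]) \le C\, K^{\overline p^*/\overline p}\, k^{\overline p^*/\overline p - \overline p^*}$. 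It then remains to check that the exponents match those claimed: writing $\overline p^* = \frac{N\overline p}{N-\overline p}$, one computes $\overline p^*/\overline p = \frac{N}{N-\overline p}$, which is precisely the exponent on $K$ in \eqref{2.2}; and $\overline p^*/\overline p - \overline p^* = \overline p^*\bigl(\tfrac1{\overline p} - 1\bigr) = \frac{N\overline p}{N-\overline p}\cdot\frac{1-\overline p}{\overline p} = -\frac{N(\overline p-1)}{N-\overline p} = -p_1$, matching the stated decay rate. This arithmetic identification of the exponents is routine once the embedding has been applied correctly; the genuine content of the lemma is the chain ``energy bound $\Rightarrow$ uniform $L^{\overline p^*}$ bound on truncations $\Rightarrow$ Chebyshev'', and the only real subtlety is the homogenization of the anisotropic gradient norms discussed above.
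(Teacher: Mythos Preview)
The paper does not prove this lemma; it simply writes ``The proof of the following two lemmas may be found in \cite{ref:L2, ref:4}.'' Your overall strategy---energy bound on $T_k(u)$, then the anisotropic Sobolev embedding, then Chebyshev's inequality---is precisely the standard argument in those references, so in spirit you are reproducing what the authors cite.

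There is, however, a concrete error in the step you yourself flag as delicate. In the Sobolev inequality you invoke, $\overline p$ is the \emph{harmonic} mean, $\tfrac{1}{\overline p}=\tfrac{1}{N}\sum_{i}\tfrac{1}{p_i}$ (this is the $\overline p$ that defines $\overline p^{*}$), and a harmonic mean satisfies $\min_i p_i\le\overline p\le\max_i p_i$. Hence your claim ``$\overline p\le p_i$ for every $i$'' is false whenever the $p_i$ are not all equal, and the bound $(Kk)^{1/p_i}\le(Kk)^{1/\overline p}$ does not follow. The case split on $Kk\gtrless 1$ does not rescue the argument either, since the constant it produces would depend on $K$. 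The remedy is to use the \emph{geometric-mean} (product) form of Troisi's embedding---the inequality displayed just before the arithmetic-mean version in the preliminaries---instead of the sum form: from $\|\partial_{x_i}T_k(u)\|_{L^{p_i}}\le(Kk)^{1/p_i}$ one obtains directly
\[
\|T_k(u)\|_{L^{\overline p^{*}}(\Omega)}\;\le\;C_s\prod_{i=1}^N\|\partial_{x_i}T_k(u)\|_{L^{p_i}(\Omega)}^{1/N}\;\le\;C_s\prod_{i=1}^N(Kk)^{1/(Np_i)}\;=\;C_s\,(Kk)^{\frac{1}{N}\sum_i 1/p_i}\;=\;C_s\,(Kk)^{1/\overline p},
\]
with no restriction on the size of $Kk$. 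After this single correction your Chebyshev step and the exponent arithmetic go through verbatim and yield exactly the claimed estimate.
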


    \begin{lemma}\label{L2.2}
    
     Let $1<{\overrightarrow{p}}<N$ and assume that $u \in \mathcal{T}_0^{1, {\overrightarrow{p}}}(\Omega)$ satisfies (\ref{2.1}) for every $k>0$. Then for every $h>0$ we hove
    \begin{equation}\label{2.3}
    \operatorname{meas}([|D u|>h]) \leq C(N, p_i) K \frac{N}{N-1} h^{-p_2}
    \end{equation}
    with $p_2=\frac{N(\overline{p}-1)}{N-1}$.
    
    \end{lemma}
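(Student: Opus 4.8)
The plan is to adapt to the anisotropic setting the classical Boccardo--Gallou\"{e}t truncation argument, taking Lemma \ref{L2.1} (the distribution estimate for $u$ itself) as the only nontrivial input. It suffices to prove the bound for $h\ge 1$: for $0<h<1$ one bounds the left-hand side crudely by $\operatorname{meas}(\Omega)$ and uses $h^{-p_2}>1$. So fix $h\ge 1$. For each direction $i\in\{1,\dots,N\}$ and every auxiliary level $k>0$ I would split
\[
[|D_iu|>h]\subseteq\bigl([|D_iu|>h]\cap[|u|<k]\bigr)\cup[|u|\ge k].
\]
On the first set, since $D_iu=D_iT_k(u)$ on $[|u|<k]$, Chebyshev's inequality together with the energy bound (\ref{2.1}) — which in particular gives $\int_{[|u|<k]}|D_iu|^{p_i}\,dx\le Kk$ — yields, using $p_i\ge\overline p$ and $h\ge1$,
\[
\operatorname{meas}\bigl([|D_iu|>h]\cap[|u|<k]\bigr)\le\frac{1}{h^{p_i}}\int_{[|u|<k]}|D_iu|^{p_i}\,dx\le\frac{Kk}{h^{\overline p}}.
\]
On the second set, Lemma \ref{L2.1} gives $\operatorname{meas}([|u|\ge k])\le CK^{\frac{N}{N-\overline p}}k^{-p_1}$ with $p_1=\frac{N(\overline p-1)}{N-\overline p}$. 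Hence
\[
\operatorname{meas}([|D_iu|>h])\le\frac{Kk}{h^{\overline p}}+CK^{\frac{N}{N-\overline p}}k^{-p_1}\qquad\text{for all }k>0.
\]

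The second step is to optimize the right-hand side in $k$. Balancing the two terms forces $k^{p_1+1}=cK^{\frac{\overline p}{N-\overline p}}h^{\overline p}$, and since $p_1+1=\frac{\overline p(N-1)}{N-\overline p}$ this means $k=c'K^{\frac1{N-1}}h^{\frac{N-\overline p}{N-1}}$. Substituting back and carrying out the exponent bookkeeping — namely $1+\frac1{N-1}=\frac{N}{N-1}$ for the power of $K$, and $-\overline p+\frac{N-\overline p}{N-1}=-\frac{N(\overline p-1)}{N-1}=-p_2$ for the power of $h$ — gives
\[
\operatorname{meas}([|D_iu|>h])\le C(N,p_i)\,K^{\frac{N}{N-1}}\,h^{-p_2}.
\]
Summing over $i=1,\dots,N$ (after, if needed, rescaling $h$ by a dimensional constant in the definition of the set $[|Du|>h]$) produces the stated estimate.

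The computations above are routine; the single point that needs genuine attention is the anisotropy entering the Chebyshev step, where each coordinate direction produces its own exponent $p_i$ rather than a common one. This is precisely why the final estimate is governed by $\overline p=\min_ip_i$: replacing $h^{-p_i}$ by $h^{-\overline p}$ (licit for $h\ge1$) is lossless for the directions achieving the minimum and only mildly wasteful for the others, while the regime $0<h<1$ is harmless because $\operatorname{meas}(\Omega)<\infty$. No further functional-analytic ingredient is needed, since the anisotropic Sobolev embedding (inequality $(2.2)$) is already built into Lemma \ref{L2.1}; the whole argument is therefore a careful repetition of the Euclidean proof with $\overline p$ in place of $p$.
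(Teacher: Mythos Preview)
The paper gives no proof of its own here; immediately before the statement it simply says ``The proof of the following two lemmas may be found in \cite{ref:L2, ref:4}.'' Your argument is exactly the classical Boccardo--Gallou\"{e}t truncation--optimization scheme that those references carry out (split $[|D_iu|>h]$ according to $[|u|<k]$ versus $[|u|\ge k]$, apply Chebyshev plus the energy bound (\ref{2.1}) on the first piece, invoke Lemma \ref{L2.1} on the second, then balance in $k$), so your approach and the intended one coincide.

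One caveat worth flagging, though it is the paper's inconsistency rather than yours: the symbol $\overline{p}$ is introduced twice in Section~2, once as $\min_i p_i$ and once as the harmonic mean. Your Chebyshev step $h^{-p_i}\le h^{-\overline{p}}$ for $h\ge1$ needs $p_i\ge\overline{p}$ and hence the \emph{minimum} reading, whereas Lemma \ref{L2.1} (which rests on the anisotropic Sobolev inequality (2.2)) naturally involves the harmonic mean. If the harmonic mean is meant throughout, the clean fix is to optimize in $k$ direction by direction without first passing to $h^{-\overline{p}}$, obtaining an exponent $q_i=p_i\,\frac{N(\overline{p}-1)}{\overline{p}(N-1)}$ for each $i$ and then taking the worst one; the structure of your proof is unchanged.
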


  For every $\epsilon>0$, we consider the Yosida regularization $\beta_\epsilon$ of $\beta$ given by
  $$
  \beta_\epsilon=\frac{1}{\epsilon}\left(I-(I+\epsilon \beta)^{-1}\right) .
  $$
  Thanks to \cite{ref:8}, there exists $j$ a nonnegative, convex and l.s.c. function defined on $\mathbb{R}$, such that
  $$
  \beta=\partial j
  $$
  To regularize $\beta$, we consider
  $$
  j_\epsilon(s)=\min _{r \in \mathbb{R}}\left\{\frac{1}{2 \epsilon}|s-r|^2+j(r)\right\}, \quad \forall s \in \mathbb{R}, \forall \epsilon>0 .
  $$
  By Proposition 2.11 in \cite{ref:8} we have

  $$
  \left\{\begin{array}{l}
  \operatorname{dom}(\beta) \subset \operatorname{dom}(j) \subset \overline{\operatorname{dom}(j)}=\overline{\operatorname{dom}(\beta)}, \\
  j_\epsilon(s)=\frac{\epsilon}{2}\left|\beta_\epsilon(s)\right|^2+j\left(J_\epsilon(s)\right) \text { where } J_\epsilon:=(I+\epsilon \beta)^{-1}, \\
  j_\epsilon \text { is a convex, Frechet-differentiable function and } \beta_\epsilon=\partial j_\epsilon, \\
  j_\epsilon \uparrow j \text { as } \epsilon \downarrow 0 .
  \end{array}\right.
  $$
  Moreover, for any $\epsilon>0, \beta_\epsilon$ is a nondecreasing and Lipschitz-continuous function.
  For any measure $\mu$ assumed diffuse with respect to the capacity $W_0^{1, \overrightarrow{p}}(\Omega)$, a well known result in \cite{ref:192} allows us to write
  \begin{equation}\label{3.1}
  \mu=f-\nabla \cdot F
  \end{equation}
  
  where $f \in L^1(\Omega)$ and $F \in\left(L^{\overrightarrow{p'}}(\Omega)\right)^N$. To regularize $\mu$, for any $\epsilon>0$, we define the functions
  $$
  f_\epsilon(x)=T_{\frac{1}{\epsilon}}(f(x)) \text { for any } x \in \Omega
  $$
  and
  $$
  \mu_\epsilon=f_\epsilon-\nabla \cdot F \text { for any } \epsilon>0 .
  $$
  Then, we consider the following approximating scheme problem
  $$
  P_\epsilon\left(\beta_\epsilon, \mu_\epsilon\right) \begin{cases}- D \cdot a\left(x, D u_\epsilon\right)+\beta_\epsilon\left(u_\epsilon\right)=\mu_\epsilon & \text { in } \Omega, \\ u_\epsilon=0 & \text { on } \partial \Omega .\end{cases}
  $$
  Thanks to \cite{ref:200}, we know that $(E, \mu_\epsilon)$ admits a unique weak solution $u_\epsilon$ in the sense that $u_\epsilon \in$ $W_0^{1, {\overrightarrow{p}}}(\Omega), \beta_\epsilon\left(U_\epsilon\right) \in L^1(\Omega)$ and $\forall \varphi \in W_0^{1, {\overrightarrow{p}}}(\Omega) \cap L^{\infty}(\Omega)$.
  \begin{equation}\label{3.2}
  \int_{\Omega} a\left(x, D u_\epsilon\right) \cdot D \varphi d x+\int_{\Omega} \beta_\epsilon\left(u_\epsilon\right) \varphi d x=\int_{\Omega} f_\epsilon \varphi d x+\int_{\Omega} F \cdot D \varphi d x .
  \end{equation}
  Let us prove the following result.
  
  \begin{proposition}\label{P3.1}

  \begin{itemize}
  \item[(i)] There exists $0<\mathrm{C}<+\infty$ such that for any $k>0$,
  \begin{equation}\label{3.3}
  \sum_{i=1}^{N}\int_{[\left| u_\epsilon \mid \leq k\right]}\left|D u_\epsilon\right|^{p_i} d x \leq C k
  \end{equation}
  \item[(ii)] The sequence $\left(\beta_\epsilon\left(u_\epsilon\right)\right)_{\epsilon>0}$ is uniformly bounded in $L^1(\Omega)$.
  \item[(iii)] For any $k>0$, the sequence $\left(\beta_\epsilon\left(T_k\left(u_\epsilon\right)\right)\right)_{\epsilon>0}$ is uniformly bounded in $L^1(\Omega)$.
  \item[(iv)] There exists $u \in \mathcal{T}_0^{1, {\overrightarrow{p}}}(\Omega)$ such that $u \in \operatorname{dom}(\beta)$ a.e. in $\Omega$ and
  
  \begin{equation}\label{3.4}
  u_\epsilon \longrightarrow u \text { in measure and a.e. in } \Omega, \text { os } \epsilon \longrightarrow 0 \text {. }
  \end{equation}
  
  \end{itemize}
  \end{proposition}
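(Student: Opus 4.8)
The plan is to establish the four claims of Proposition~\ref{P3.1} in the order listed, since each one feeds into the next. For \textbf{(i)}, I would take $\varphi = T_k(u_\epsilon)$ as a test function in \eqref{3.2}. The monotonicity of $\beta_\epsilon$ together with $0 \in \beta_\epsilon(0)$ gives $\int_\Omega \beta_\epsilon(u_\epsilon) T_k(u_\epsilon)\,dx \geq 0$, so it drops out favorably. The coercivity hypothesis $(\mathbf{H}_1)$ bounds the diffusion term from below by $\lambda \sum_i \int_{[|u_\epsilon|\leq k]} |D u_\epsilon|^{p_i}\,dx$. On the right-hand side, $\int_\Omega f_\epsilon T_k(u_\epsilon)\,dx \leq k \|f\|_{L^1(\Omega)}$ since $|f_\epsilon|\leq |f|$, and the term $\int_\Omega F\cdot DT_k(u_\epsilon)\,dx$ is handled by Young's inequality: $\int F \cdot DT_k(u_\epsilon) \leq \frac{\lambda}{2}\sum_i \int_{[|u_\epsilon|\leq k]}|D u_\epsilon|^{p_i} + C\sum_i \int |F_i|^{p_i'}$, absorbing half the diffusion term on the left. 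This yields \eqref{3.3} with $C$ depending only on $\lambda$, $\|f\|_{L^1}$, $\|F\|_{(L^{\vec p'})^N}$.

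For \textbf{(ii)}, the standard device is to test \eqref{3.2} with $\varphi = T_1(u_\epsilon - T_n(u_\epsilon))$ — or more simply with a smooth approximation of $\mathrm{sign}(u_\epsilon)$ times a cutoff, but the cleanest route is $\varphi_\delta = \frac{1}{\delta}T_\delta(u_\epsilon)$ and let $\delta \to 0$, or directly use the truncation argument: testing with $S(u_\epsilon)$ where $S$ is a bounded approximation of $\mathrm{sign}_0$. Using $(\mathbf{H}_3)$-type positivity of $\int a(x,Du_\epsilon)\cdot D S(u_\epsilon)\,dx \geq 0$ (since $S$ is nondecreasing), one gets $\int_\Omega \beta_\epsilon(u_\epsilon) S(u_\epsilon)\,dx \leq \int f_\epsilon S(u_\epsilon) + \int F \cdot D S(u_\epsilon)$; the last term vanishes in the limit as $S \to \mathrm{sign}_0$ becomes constant off a null set of $Du_\epsilon$, leaving $\int_\Omega |\beta_\epsilon(u_\epsilon)|\,dx \leq \|f\|_{L^1(\Omega)}$. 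Claim \textbf{(iii)} then follows from \textbf{(ii)} because $|\beta_\epsilon(T_k(u_\epsilon))| \leq |\beta_\epsilon(u_\epsilon)|$ pointwise (as $\beta_\epsilon$ is nondecreasing with $0\in\beta_\epsilon(0)$, so $|\beta_\epsilon(T_k(s))|\leq |\beta_\epsilon(s)|$), hence the same $L^1$ bound holds.

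For \textbf{(iv)}, the a priori bound \eqref{3.3} says $T_k(u_\epsilon)$ is bounded in $W_0^{1,\vec p}(\Omega)$ for each fixed $k$; by the compact embedding (Proposition in Section~2), passing to a subsequence $T_k(u_\epsilon)$ converges strongly in $L^q(\Omega)$ and a.e. A diagonal argument over $k \in \mathbb{N}$, combined with the uniform estimate on $\mathrm{meas}([|u_\epsilon|>k])$ from Lemma~\ref{L2.1} (which gives equi-smallness of the level sets, so the truncation levels control the whole function uniformly in $\epsilon$), produces a measurable $u$ with $T_k(u) \in W_0^{1,\vec p}(\Omega)$ for all $k$ and $u_\epsilon \to u$ in measure and a.e., establishing \eqref{3.4} and $u \in \mathcal{T}_0^{1,\vec p}(\Omega)$. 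Finally, to see $u \in \mathrm{dom}(\beta)$ a.e., I would invoke Lemma~\ref{L2.3} with $z_n = u_{\epsilon_n}$, $w_n = \beta_{\epsilon_n}(u_{\epsilon_n})$, $\beta_n = \beta_{\epsilon_n}$: the Yosida approximants converge to $\beta$ in the sense of graphs, $(w_n)$ is bounded in $L^1$ by \textbf{(ii)}, and $z_n \to u$ in $L^1$ (upgrading the a.e. convergence via Vitali/equi-integrability coming from the level-set estimate), so Lemma~\ref{L2.3} yields $u \in \mathrm{dom}(\beta)$ $\mathcal{L}^N$-a.e.

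The main obstacle I anticipate is \textbf{(ii)}: making the test-function argument rigorous requires care because $\mathrm{sign}_0(u_\epsilon)$ is not admissible directly, and one must show the term $\int_\Omega F\cdot D S(u_\epsilon)\,dx$ genuinely tends to zero as the approximation $S$ flattens — this uses that $D S(u_\epsilon)$ is supported on $[|u_\epsilon|\leq \delta]$ whose measure and whose contribution to $\int|Du_\epsilon|^{p_i}$ is controlled by \eqref{3.3} with $k=\delta$, giving an $O(\delta)$ bound that, combined with Hölder against $F\in L^{\vec p'}$, vanishes. Everything else is a fairly standard measure-data approximation scheme; the anisotropic Sobolev embedding and Lemmas~\ref{L2.1}--\ref{L2.3} do the structural work, and the novelty is only in threading these anisotropic estimates through the classical Boccardo--Gallouët machinery.
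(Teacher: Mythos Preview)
Your overall architecture is right, but the handling of the divergence part $F$ of the data is where the argument breaks, precisely at the point you flag as the main obstacle. In \textbf{(ii)} you test with $S=\tfrac{1}{\delta}T_\delta$ and claim that $\int_\Omega F\cdot DS(u_\epsilon)\,dx$ vanishes as $\delta\to 0$ because ``$\int_{[|u_\epsilon|\le\delta]}|Du_\epsilon|^{p_i}$ is $O(\delta)$''. But after H\"older this only gives
\[
\Bigl|\tfrac{1}{\delta}\int_{[|u_\epsilon|<\delta]}F_i\,D_iu_\epsilon\,dx\Bigr|
\;\le\;\tfrac{1}{\delta}\,\|F_i\|_{L^{p_i'}}\bigl(C\delta\bigr)^{1/p_i}
\;=\;C'\,\delta^{1/p_i-1},
\]
which diverges for every $p_i>1$. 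The same phenomenon already contaminates your proof of \textbf{(i)}: Young's inequality on $\int F\cdot DT_k(u_\epsilon)$ leaves a constant $C\sum_i\|F_i\|_{p_i'}^{p_i'}$ that does \emph{not} scale with $k$, so you obtain only $\sum_i\int_{[|u_\epsilon|\le k]}|Du_\epsilon|^{p_i}\le C(1+k)$, not the $Ck$ of \eqref{3.3}. The paper sidesteps both issues in one stroke by recognizing that $\mu_\epsilon=f_\epsilon-\operatorname{div}F=\mu+(f_\epsilon-f)$ is itself a diffuse measure with $|\mu_\epsilon|(\Omega)$ bounded uniformly in $\epsilon$, so by Lemma~2.2,
\[
\int_\Omega f_\epsilon\,T_k(u_\epsilon)\,dx+\int_\Omega F\cdot DT_k(u_\epsilon)\,dx
=\int_\Omega T_k(u_\epsilon)\,d\mu_\epsilon
\;\le\;k\,|\mu_\epsilon|(\Omega)\;\le\;Ck.
\]
This gives \eqref{3.3} immediately for \emph{all} $k>0$; for \textbf{(ii)} one then drops the nonnegative diffusion term from \eqref{3.5}, divides by $k$, and lets $k\to 0$ with Fatou to obtain $\int_\Omega|\beta_\epsilon(u_\epsilon)|\,dx\le C$ --- no separate analysis of the $F$-term is needed.

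A smaller issue in \textbf{(iv)}: you apply Lemma~\ref{L2.3} with $z_n=u_{\epsilon_n}$, which requires $u_{\epsilon_n}\to u$ in $L^1(\Omega)$. The level-set bound from Lemma~\ref{L2.1} gives this (via Vitali) only when $p_1=\tfrac{N(\overline p-1)}{N-\overline p}>1$, i.e.\ $\overline p>\tfrac{2N}{N+1}$; for smaller $\overline p$ the sequence $u_\epsilon$ need not even be bounded in $L^1$. The paper avoids this by applying Lemma~\ref{L2.3} to $z_n=T_k(u_{\epsilon_n})$ and $w_n=\beta_{\epsilon_n}(T_k(u_{\epsilon_n}))$ (which is why \textbf{(iii)} is stated separately): the truncations converge in $L^1$ by the compact embedding, giving $T_k(u)\in\operatorname{dom}\beta$ a.e.\ for every $k$, hence $u\in\operatorname{dom}\beta$ a.e.
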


  \begin{proof}

   (i) For any $k>0$, we take $\varphi=T_k\left(u_\epsilon\right)$ as test function in (\ref{3.2}). We get
   
  \begin{equation}\label{3.5}
  \int_{\Omega} a\left(x, D u_\epsilon\right) \cdot D T_k\left(u_\epsilon\right) d x+\int_{\Omega} \beta_\epsilon\left(u_\epsilon\right) T_k\left(u_\epsilon\right) d x=\int_{\Omega} f_\epsilon T_k\left(u_\epsilon\right) d x+\int_{\Omega} F \cdot D T_k\left(u_\epsilon\right) d x .
  \end{equation}
  
  Since
  $$
  \left|\int_{\Omega} f_\epsilon T_k\left(u_\epsilon\right) d x+\int_{\Omega} F \cdot D T_k\left(u_\epsilon\right) d x\right|=\left|\int_{\Omega} T_k\left(u_\epsilon\right) d \mu_\epsilon\right| \leq k|\mu|(\Omega) \leq C k
  $$
  and
  $$
  \int_{\Omega} \beta_\epsilon\left(u_\epsilon\right) T_k\left(u_\epsilon\right) d x \geq 0
  $$
  we deduce that
  $$
  \int_{\Omega} a\left(x, D u_\epsilon\right) \cdot D T_k\left(u_\epsilon\right) d x \leq C k
  $$
  Using $(\mathbf{H}_{1})$, we obtain $\lambda \sum_{i=1}^{N}\int_{\Omega}\left|D T_k\left(u_\epsilon\right)\right|^{p_i} d x \leq C k$ and (i) follows.
  
  (ii) For any $k>0$, the first term of \ref{3.5} is nonnegative, then it follows that
  $$
  \int_{\Omega} \beta_\epsilon\left(u_\epsilon\right) T_k\left(u_\epsilon\right) d x \leq k|\mu|(\Omega) \leq C k
  $$
  Dividing by $k$, we get
  $$
  \frac{1}{k} \int_{\Omega} \beta_\epsilon\left(u_\epsilon\right) T_k\left(u_\epsilon\right) d x \leq C .
  $$
  Letting $k$ go to 0 , we deduce from the inequality above
  $$
  \int_{\Omega} \beta_\epsilon\left(u_\epsilon\right) \operatorname{sign}_0\left(u_\epsilon\right) d x \leq C,
  $$
  which implies $\int_{\Omega}\left|\beta_\epsilon\left(u_\epsilon\right)\right| d x \leq C$ and so $\left(\beta_\epsilon\left(u_\epsilon\right)\right)_\epsilon$ is bounded in $L^1(\Omega)$.
  
  (iii) Since
  $$
  \int_{\Omega}\left|\beta_\epsilon\left(T_k\left(u_\epsilon\right)\right)\right| d x \leq \int_{\Omega}\left|\beta_\epsilon\left(u_\epsilon\right)\right| d x
  $$
  (iii) follows obviously from (ii).
  
  (iv) Using (i) we can assert that for all $k>0$, the sequence $\left(D T_k\left(u_\epsilon\right)\right)_{\epsilon>0}$ is bounded in $L^{\overrightarrow{p}}(\Omega)$. thus the sequence $\left(T_k\left(u_\epsilon\right)\right)_{\epsilon>0}$ is bounded in $W_0^{1, {\overrightarrow{p}}}(\Omega)$. Then up to a subsequence, we can assume that as $\epsilon \longrightarrow 0,\left(T_k\left(u_\epsilon\right)\right)_{\epsilon>0}$ converges strongly to some function $\sigma_k$ in $L^q(\Omega)$ and a.e. in $\Omega$ for any $1 \leq q<\overline{p}^{*}=\frac{N \overline{p}}{N-\overline{p}}$. Let us see that the sequence $\left(u_\epsilon\right)_{\epsilon>0}$ is Cauchy in measure.
  
  Indeed let $s>0$ and define
  $$
  E_1:=\left[\left|u_{\epsilon_1}\right|>k\right], \quad E_2:=\left[\left|u_{\epsilon_2}\right|>k\right] \quad \text { and } E_3:=\left[\left|T_k\left(u_{\epsilon_1}\right)-T_k\left(u_{\epsilon_2}\right)\right|>s\right]
  $$
  where $k>0$ is to be fixed. We note that
  $$
  \left[\left|u_{\epsilon_1}-u_{\epsilon_2}\right|>s\right] \subset E_1 \cup E_2 \cup E_3
  $$
  and hence
  \begin{equation}\label{3.6}
  \operatorname{meas}\left(\left[\left|u_{\epsilon_1}-u_{\epsilon_2}\right|>s\right]\right) \leq \operatorname{meas}\left(E_1\right)+\operatorname{meas}\left(E_2\right)+\operatorname{meas}\left(E_3\right) .
  \end{equation}
  
  Let $\theta>0$. Using Lemma \ref{L2.1}, we choose $k=k(\theta)$ such that
  \begin{equation}\label{3.7}
  \text { meas }\left(E_1\right) \leq \theta / 3 \text { and meas }\left(E_2\right) \leq \theta / 3 \text {. }
  \end{equation}
  Since $\left(T_k\left(u_\epsilon\right)\right)_{\epsilon>0}$ converges strongly in $L^q(\Omega)$ then it is a Cauchy sequence in $L^q(\Omega)$.
  Thus
  \begin{equation}\label{3.8}
  \operatorname{meas}\left(E_3\right) \leq\frac{1}{s^q} \int_{\Omega}\left|T_k\left(u_{\epsilon_1}\right)-T_k\left(u_{\epsilon_2}\right)\right|^q d x \leq \frac{\theta}{3},
  \end{equation}
  for all $\epsilon_1, \epsilon_2 \geq n_0(s, \theta)$. Finally from (\ref{3.6}). (\ref{3.7}) and (\ref{3.8}) we obtain
  \begin{equation}\label{3.9}
  \text { meas }\left(\left[\left|u_{\epsilon_1}-u_{\epsilon_2}\right|>s\right]\right) \leq \theta \text { for all } \epsilon_1, \epsilon_2 \geq n_0(s, \theta) .
  \end{equation}
  Relation (\ref{3.9}) means that the sequence $\left(u_\epsilon\right)_{\epsilon>0}$ is Cauchy in measure, so $u_\epsilon \longrightarrow u$ in measure and up to a subsequence, we have $u_\epsilon \longrightarrow u$ a.e. in $\Omega$. Hence $\sigma_k=T_k(u)$ a.e. in $\Omega$ and so $u \in \mathcal{T}_0^{1, {\overrightarrow{p}}}(\Omega)$. Using Lemma \ref{L2.3}, we have $T_k(u) \in \operatorname{dom} \beta$ a.e. in $\Omega$ for any $k>0$. This implies that $u \in \operatorname{dom} \beta$ a.e. in $\Omega$.

  \end{proof}

  \begin{lemma}\label{P3.2}
  
  For any $k>0$, as $\in$ tends to 0 , we have:
  \begin{itemize}

  \item[(i)] $a\left(x, D T_k\left(u_\epsilon\right)\right) \rightharpoonup a\left(x, D T_k(u)\right)$ weakly in $\prod_{i=1}^{N}L^{p_{i}^{^{%
  \prime }}}(\Omega )$.
  \item[(ii)] $D T_k\left(u_\epsilon\right) \longrightarrow D T_k(u)$ a.e. in $\Omega$.
  \item[(iii)] $a\left(x, D T_k\left(u_\epsilon\right)\right) \cdot D T_k\left(u_\epsilon\right) \longrightarrow a\left(x, D T_k(u)\right) \cdot D T_k(u)$ a.e. in $\Omega$ and strongy in $L^1(\Omega)$.
  \item[(iv)] $D T_k\left(u_\epsilon\right) \longrightarrow D T_k(u)$ strongly in $\left(L^{\overrightarrow{p}}(\Omega)\right)^N$.
  \end{itemize}
  \end{lemma}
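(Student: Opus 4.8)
I would follow the classical Boccardo--Murat scheme for almost-everywhere convergence of the gradients of truncations, adapted to the anisotropic spaces and to the Yosida-regularized term $\beta_\epsilon(u_\epsilon)$. First, from Proposition~\ref{P3.1}(i) the family $(DT_k(u_\epsilon))_\epsilon$ is bounded in $\prod_{i=1}^{N}L^{p_i}(\Omega)$, and since $T_k(u_\epsilon)\to T_k(u)$ a.e.\ (Proposition~\ref{P3.1}(iv)), this gives $T_k(u_\epsilon)\rightharpoonup T_k(u)$ in $W_0^{1,\overrightarrow{p}}(\Omega)$, hence $DT_k(u_\epsilon)\rightharpoonup DT_k(u)$ weakly; by $(\mathbf{H}_2)$ the family $(a(x,DT_k(u_\epsilon)))_\epsilon$ is bounded in $\prod_{i=1}^{N}L^{p_i'}(\Omega)$, so along a subsequence it converges weakly to some $\Phi_k$. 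Setting $g_\epsilon:=(a(x,DT_k(u_\epsilon))-a(x,DT_k(u)))\cdot(DT_k(u_\epsilon)-DT_k(u))\ge 0$ (the sign by $(\mathbf{H}_3)$), the whole statement reduces to proving $g_\epsilon\to 0$ in $L^1(\Omega)$: a subsequence then has $g_\epsilon\to 0$ a.e., which yields (ii) by the Leray--Lions/Minty monotonicity argument, and (i) follows because $a(x,DT_k(u_\epsilon))\to a(x,DT_k(u))$ a.e.\ and, being bounded in the reflexive space $\prod_i L^{p_i'}(\Omega)$, converges weakly to that limit (so $\Phi_k=a(x,DT_k(u))$ and all limits are subsequence-independent).

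The hard part is the estimate $\int_\Omega g_\epsilon\,dx\to 0$. I would fix $\eta\in(0,1)$ and test \eqref{3.2} with the admissible function $\varphi_\epsilon:=T_\eta(u_\epsilon-T_k(u))$. The decisive feature is that $D\varphi_\epsilon$ is supported on $\{|u_\epsilon-T_k(u)|<\eta\}$, on which $|u_\epsilon|\le k+\eta$, so there $Du_\epsilon=DT_{k+\eta}(u_\epsilon)$ and only bounded truncated gradients appear --- which is exactly what neutralizes the measure-data difficulty. In the resulting identity, $\int_\Omega f_\epsilon\varphi_\epsilon\,dx\to\int_\Omega f\,T_\eta(u-T_k(u))\,dx=O(\eta)$ by dominated convergence ($|f_\epsilon\varphi_\epsilon|\le\eta|f|$); $\int_\Omega F\cdot D\varphi_\epsilon\,dx\to\int_\Omega F\cdot DT_\eta(u-T_k(u))\,dx=O(\eta)$ since $\|DT_\eta(u-T_k(u))\|_{\prod_i L^{p_i}}\to 0$ as $\eta\to 0$; and the delicate term obeys $\int_\Omega\beta_\epsilon(u_\epsilon)\varphi_\epsilon\,dx\ge -C\eta$, because on $\{|u_\epsilon|>k\}$ one has $\operatorname{sign}(\beta_\epsilon(u_\epsilon))=\operatorname{sign}(u_\epsilon-T_k(u))=\operatorname{sign}(\varphi_\epsilon)$, so the integrand is nonnegative there, while on $\{|u_\epsilon|\le k\}$ one bounds $|\beta_\epsilon(u_\epsilon)\varphi_\epsilon|\le\eta\,|\beta_\epsilon(T_k(u_\epsilon))|$ and uses the $L^1$-bound of Proposition~\ref{P3.1}(iii). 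Hence $\limsup_\epsilon\int_\Omega a(x,Du_\epsilon)\cdot D\varphi_\epsilon\,dx\le\omega(\eta)$ with $\omega(\eta)\to 0$. Conversely, expanding $\int_\Omega a(x,Du_\epsilon)\cdot D\varphi_\epsilon\,dx$ over $\{|u_\epsilon-T_k(u)|<\eta\}$, splitting the domain according to $\{|u|<k\}$ and its complement, and using $(\mathbf{H}_3)$ with the weak convergence of $a(x,DT_{k+\eta}(u_\epsilon))$ and the strong $\prod_i L^{p_i}(\Omega)$-convergence of the gradients of $u$ localized on sets of vanishing measure, one gets $\liminf_\epsilon\int_\Omega a(x,Du_\epsilon)\cdot D\varphi_\epsilon\,dx\ge\limsup_\epsilon\int_\Omega g_\epsilon\,dx-\omega'(\eta)$, $\omega'(\eta)\to 0$ (the comparison passing, as usual, through the intermediate fact $DT_k(u_\epsilon)\to DT_k(u)$ in measure). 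Since $\int_\Omega g_\epsilon\,dx\ge 0$, letting $\epsilon\to 0$ and then $\eta\to 0$ forces $\int_\Omega g_\epsilon\,dx\to 0$. I expect the bookkeeping of the $\beta_\epsilon$-term and of the level mismatch $k$ versus $k+\eta$ to be the only genuinely delicate points.

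Granting $g_\epsilon\to 0$ a.e.\ along a subsequence, the rest is routine. Pointwise a.e.: $(\mathbf{H}_1)$ on the leading term and $(\mathbf{H}_2)$ on the cross terms in the expansion of $g_\epsilon(x)$ force $(DT_k(u_\epsilon)(x))_\epsilon$ to be bounded (the powers $|D_iT_k(u_\epsilon)(x)|^{p_i}$ dominate), and the monotonicity of $a(x,\cdot)$ then forces every limit point to equal $DT_k(u)(x)$; this is (ii), and (i) follows as above. For (iii): by continuity $a(x,DT_k(u_\epsilon))\cdot DT_k(u_\epsilon)\to a(x,DT_k(u))\cdot DT_k(u)$ a.e., these are nonnegative by $(\mathbf{H}_1)$, and expanding $\int_\Omega g_\epsilon\,dx\to 0$ with the weak convergences and (i) gives $\int_\Omega a(x,DT_k(u_\epsilon))\cdot DT_k(u_\epsilon)\,dx\to\int_\Omega a(x,DT_k(u))\cdot DT_k(u)\,dx$; a nonnegative a.e.-convergent sequence with convergent integrals converges in $L^1(\Omega)$. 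Finally, for (iv), $(\mathbf{H}_1)$ gives $\lambda\,|D_iT_k(u_\epsilon)|^{p_i}\le a(x,DT_k(u_\epsilon))\cdot DT_k(u_\epsilon)$ for each $i$, whose right-hand side is equi-integrable by (iii); hence $(|D_iT_k(u_\epsilon)|^{p_i})_\epsilon$ is equi-integrable, and Vitali's theorem together with (ii) gives $D_iT_k(u_\epsilon)\to D_iT_k(u)$ in $L^{p_i}(\Omega)$ for every $i$.
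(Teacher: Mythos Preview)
Your outline is correct and arrives at the same conclusion as the paper---namely $\int_\Omega g_\epsilon\,dx\to 0$, from which (ii)--(iv) follow by the pointwise monotonicity/Vitali arguments you describe (the paper does these last steps exactly as you do). The route to $\int_\Omega g_\epsilon\,dx\to 0$, however, is genuinely different.

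The paper tests \eqref{3.2} with $h_l(u_\epsilon)\bigl(T_k(u_\epsilon)-T_k(u)\bigr)$ where $h_l$ is a plateau cutoff, and proceeds in four steps: (1) it disposes of the $\beta_\epsilon$--term not by your sign argument, but by comparing with $\beta_\epsilon(u_r)$ for the regularization $u_r=(T_l(u)\wedge(M-r))\vee(m+r)$ that stays strictly inside $\operatorname{dom}\beta$, so that $\beta_\epsilon(u_r)$ is bounded uniformly in~$\epsilon$; (2) it proves a separate decay--at--infinity estimate $\limsup_{l\to\infty}\limsup_{\epsilon\to0}\int_{[l<|u_\epsilon|<l+1]}a(x,Du_\epsilon)\cdot Du_\epsilon\,dx\le 0$; (3) it combines (1) and (2) to obtain \eqref{3.15}; (4) it identifies the weak limit $H_k$ via a Minty linearization. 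Your approach replaces the pair $(h_l,\ l\to\infty)$ by the single small parameter in $T_\eta(u_\epsilon-T_k(u))$, handles $\beta_\epsilon$ more directly through the sign observation on $\{|u_\epsilon|>k\}$ together with Proposition~\ref{P3.1}(iii) on $\{|u_\epsilon|\le k\}$, and avoids a separate Step~2 because the test function already localizes to $\{|u_\epsilon|\le k+\eta\}$. The trade--off is that your scheme requires the $k$ versus $k+\eta$ bookkeeping and, as you note, the standard intermediate passage through convergence in measure (e.g.\ via $\int g_\epsilon^\theta\,dx\to 0$ for some $\theta\in(0,1)$) to control the contribution of the shrinking set $\{|u_\epsilon|\le k,\ |T_k(u_\epsilon)-T_k(u)|\ge\eta\}$; the paper avoids that two--tier argument at the cost of the $u_r$--device and the extra Step~2. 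Both are standard in the $L^1$/measure--data literature; your variant is somewhat shorter once Proposition~\ref{P3.1}(iii) is available, while the paper's version is more self--contained and makes the role of $\operatorname{dom}\beta=(m,M)$ explicit.
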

   
  \begin{proof}
  
   (i) Using $(\mathbf{H}_{2})$ we see that the sequence $\left(a\left(x, D T_k\left(u_\epsilon\right)\right)\right)_{\epsilon>0}$ is bounded in $\left(L^{{\overrightarrow{p'}}}(\Omega)\right)^N$, then up to a subsequence
  $$
  a\left(x, D T_k\left(u_\epsilon\right)\right) \rightharpoonup H_k \quad \text { in }\prod_{i=1}^{N}L^{p_{i}^{^{%
  \prime }}}(\Omega ) .
  $$
  Let us prove that $H_k=a\left(x, D T_k(u)\right)$ a.e. in $\Omega$ with the same arguments  as in the proof in Lemma 6.1 in \cite{ref:L2}. The proof consists in four steps.
  
  Step 1: For every function $h \in W^{1,+\infty}(\mathbb{R}), h \geq 0$ with supp( $h$ ) compact.
  \begin{equation}\label{3.10}
  \limsup _{\epsilon \rightarrow 0} \int_{\Omega} a\left(x, D u_\epsilon\right) \cdot D\left[h\left(u_\epsilon\right)\left(T_k\left(u_\epsilon\right)-T_k(u)\right)\right] d x \leq 0 .
  \end{equation}
  
  Taking $h\left(u_\epsilon\right)\left(T_k\left(u_\epsilon\right)-T_k(u)\right)$ as test function in (\ref{3.2}), we have

  \begin{equation}\label{3.11}
  \begin{gathered}
  \int_{\Omega} a\left(x, D u_\epsilon\right) \cdot D\left[h\left(u_\epsilon\right)\left(T_k\left(u_\epsilon\right)-T_k(u)\right)\right] d x+\int_{\Omega} \beta_\epsilon\left(u_\epsilon\right) h\left(u_\epsilon\right)\left(T_k\left(u_\epsilon\right)-T_k(u)\right) d x \\
  =\int_{\Omega} f_\epsilon h\left(u_\epsilon\right)\left(T_k\left(u_\epsilon\right)-T_k(u)\right) d x+\int_{\Omega} F \cdot D\left[h\left(u_\epsilon\right)\left(T_k\left(u_\epsilon\right)-T_k(u)\right)\right] d x .
  \end{gathered}
  \end{equation}
  
  In addition, we see that $h\left(u_\epsilon\right)\left(T_k\left(u_\epsilon\right)-T_k(u)\right) \rightharpoonup 0$ weakly in $W_0^{1, {\overrightarrow{p}}}(\Omega)$. Indeed the sequence  $\left(h\left(u_\epsilon\right)\left(T_k\left(u_\epsilon\right)-T_k(u)\right)\right)_{\epsilon>0}$ is bounded in $W_0^{1, {\overrightarrow{p}}}(\Omega)$ and converges to zero almost everywhere on $\Omega$. This implies that
  $$
  \lim _{\epsilon \rightarrow 0} \int_{\Omega} F \cdot D\left[h\left(u_\epsilon\right)\left(T_k\left(u_\epsilon\right)-T_k(u)\right)\right] d x=0 .
  $$
  Note also that, by generalized dominated convergence theorem, we have
  $$
  \lim _{\epsilon \rightarrow 0} \int_{\Omega} f_\epsilon h\left(u_\epsilon\right)\left(T_k\left(u_\epsilon\right)-T_k(u)\right) d x=0 .
  $$
  Let us now prove that
  \begin{equation}\label{3.12}
  \limsup _{\epsilon \rightarrow 0} \int_{\Omega} \beta_\epsilon\left(u_\epsilon\right) h\left(u_\epsilon\right)\left(T_k\left(u_\epsilon\right)-T_k(u)\right) d x \geq 0 .
  \end{equation}
  For any $0<r$ sufficiently small we consider
  $$
  u_r=\left(T_l(u) \wedge(M-r)\right) \vee(m+r),
  $$
  where $l$ is such that $\operatorname{supp}(h) \subset[-l,+l]$. For any $k>0, T_k\left(u_r\right) \in W_0^{1, {\overrightarrow{p}}}(\Omega)$. Furthermore, since
  $$
  \int_{\Omega} h\left(u_\epsilon\right)\left(\beta_\epsilon\left(u_\epsilon\right)-\beta_\epsilon\left(u_{\Gamma}\right)\right)\left(T_k\left(u_\epsilon\right)-T_k\left(u_{\Gamma}\right)\right) d x \geq 0,
  $$
  we have
  $$
  \begin{aligned}
  \int_{\Omega} \beta_\epsilon\left(u_\epsilon\right) h\left(u_\epsilon\right)\left(T_k\left(u_\epsilon\right)-T_k(u)\right) d x \geq & \int_{\Omega} h\left(u_\epsilon\right) \beta_\epsilon\left(u_r\right)\left(T_k\left(u_\epsilon\right)-T_k\left(u_{r}\right)\right) d x \\
  & +\int_{\Omega} h\left(u_\epsilon\right) \beta_\epsilon\left(u_\epsilon\right)\left(T_k\left(u_T\right)-T_k(u)\right) d x .
  \end{aligned}
  $$
  See that
  $$
  \max (m+r,-l) \leq u_r \leq \min (M-r, l),
  $$
  so that
  $$
  \beta_\epsilon(\max (m+r,-l)) \leq \beta_\epsilon\left(u_r\right) \leq \beta_\epsilon(\min (M-r, l)) .
  $$

  Using Lebesgue dominated convergence theorem, we get that
  $$
  \limsup _{\epsilon \rightarrow 0} \int_{\Omega} h\left(u_\epsilon\right) \beta_\epsilon\left(u_r\right)\left(T_k\left(u_\epsilon\right)-T_k\left(u_r\right)\right) d x=\int_{\Omega} h(u) \beta_0\left(u_r\right)\left(T_k(u)-T_k\left(u_r\right)\right) d x .
  $$
  As to the last term
  $$
  I:=\int_{\Omega} h\left(u_\epsilon\right) \beta_\epsilon\left(u_\epsilon\right)\left(T_k\left(u_r\right)-T_k(u)\right) d x
  $$
  see that
  
  $$
  \begin{aligned}
  I= & \int_{\Omega} f_\epsilon h\left(u_\epsilon\right)\left(T_k\left(u_{\Gamma}\right)-T_k(u)\right) d x+\int_{\Omega} F \cdot D\left[h\left(u_\epsilon\right)\left(T_k\left(u_{\Gamma}\right)-T_k(u)\right)\right] d x \\
  & -\int_{\Omega} a\left(x, D u_\epsilon\right) \cdot D\left[h\left(u_\epsilon\right)\left(T_k\left(u_r\right)-T_k(u)\right)\right] d x \\
  = & \int_{\Omega} f_\epsilon h\left(u_\epsilon\right)\left(T_k\left(u_r\right)-T_k(u)\right) d x+\int_{\Omega} F \cdot D\left[h\left(u_\epsilon\right)\left(T_k\left(u_{r}\right)-T_k(u)\right)\right] d x \\
  & -\int_{\Omega} h\left(u_\epsilon\right) a\left(x, D u_\epsilon\right) \cdot D\left(T_k\left(u_{r}\right)-T_k(u)\right) dx\\
    &-\int_{\Omega} h^{\prime}\left(u_{\varepsilon}\right)\left(T_k\left(u_{r}\right)-T_k(u)\right) a\left(x, D u_\epsilon\right)\cdot D u_\epsilon d x .
  \end{aligned}
  $$
  
  We need to let first $\varepsilon \rightarrow 0$ and next $r \rightarrow 0$. The three first terms are obvious. As to the last term, see that
  $$
  \begin{aligned}
  \left|T_k\left(u_r\right)-T_k(u)\right| \leq \left|\left(T_k(u)-T_k(M-r)\right) \chi_{[M-r \leq u \leq M]}\right|\\
     +\left|\left(T_k(u)-T_k(m+r)\right) \chi_{[m \leq u \leq m+r]}\right| \leq 2 r,
  \end{aligned}
  $$
  which implies that
  $$
  \begin{aligned}
  & \lim _{r \rightarrow 0} \limsup _{\epsilon \rightarrow 0}\left|\int_{\Omega} h^{\prime}\left(u_\epsilon\right)\left(T_k\left(u_r\right)-T_k(u)\right) a\left(x, D u_\epsilon\right) \cdot D u_\epsilon d x\right| \\
  & \quad=\lim _{\tau \rightarrow 0} \limsup _{\epsilon \rightarrow 0} 2 r\left|\int_{\Omega} h^{\prime}\left(u_\epsilon\right) a\left(x, D u_\epsilon\right) \cdot D u_\epsilon d x\right| \\
  & =0,
  \end{aligned}
  $$
  
  where we use the fact that $\left|\int_{\Omega} h^{\prime}\left(u_{\varepsilon}\right) a\left(x, D u_\epsilon\right) \cdot D u_\epsilon d x\right|$ is bounded.
  Now, see that
  $$
  h(u) \beta_0\left(u_r\right)\left(T_k(u)-T_k\left(u_r\right)\right) \geq 0 .
  $$
  Indeed.
  $$
  \begin{aligned}
  h(u) \beta_0\left(u_r\right)\left(T_k(u)-T_k\left(u_r\right)\right)= & h(u) \beta_0(M-r)\left(T_k(u)-T_k(M-r)\right) \chi[M-r \leq u \leq M] \\
  & +h\left(u_\epsilon\right) \beta_0(m+r)\left(T_k(u)-T_k(m+r)\right) \chi_{[m \leq u \leq m+r]} \geq 0,
  \end{aligned}
  $$

  where we use the fact that $\beta_0(M-r) \geq 0$ and $\beta_0(m+r) \leq 0$ (since $0 \in \beta(0)$ and $\left.m+r \leq 0 \leq M-r\right)$. This gives (\ref{3.12}).
  
  Passing to the limit in (\ref{3.11}) and using the above results we obtain the inequality (\ref{3.10}).
  
  Step 2: We prove that
  \begin{equation}\label{3.13}
  \limsup _{l \rightarrow+\infty} \limsup _{\epsilon \rightarrow 0} \int_{\left[l<| u_\epsilon \mid<l+1\right]} a\left(x, D u_\epsilon\right) \cdot D u_\epsilon d x \leq 0 .
  \end{equation}
  
  Taking $w_l\left(u_\epsilon\right)$ as test function in (3.2), where $w_l(r)=T_1\left(r-T_l(r)\right)$, we get
  $$
  \begin{gathered}
  \int_{\Omega} a\left(x, D u_\epsilon\right) \cdot D T_1\left(u_\epsilon-T_l\left(u_\epsilon\right)\right) d x+\int_{\Omega} \beta_\epsilon\left(u_\epsilon\right) T_1\left(u_\epsilon-T_l\left(u_\epsilon\right)\right) d x \\
  =\int_{\Omega} f_\epsilon T_1\left(u_\epsilon-T_l\left(u_\epsilon\right)\right) d x+\int_{\Omega} F \cdot D T_1\left(u_\epsilon-T_l\left(u_\epsilon\right)\right) d x .
  \end{gathered}
  $$
  Since
  $$
  \int_{\Omega} \beta_\epsilon\left(u_\epsilon\right) T_1\left(u_\epsilon-T_l\left(u_\epsilon\right)\right) d x \geq 0
  $$
  
  and
  
  $$
  \int_{\Omega} a\left(x, D u_\epsilon\right) \cdot D T_1\left(u_\epsilon-T_l\left(u_\epsilon\right)\right) d x=\int_{\left[ l <\left|u_\epsilon\right| <l+1\right]} a\left(x, D u_\epsilon\right) \cdot D u_\epsilon d x
  $$
  
  we get
  \begin{equation}\label{3.14}
  \int_{\left[ l <\left|u_\epsilon\right| <l+1\right]} a\left(x, D u_\epsilon\right) \cdot D u_\epsilon d x \leq \int_{\Omega} f_\epsilon T_1\left(u_\epsilon-T_I\left(u_\epsilon\right)\right) d x+\int_{\Omega} F \cdot D T_1\left(u_\epsilon-T_I\left(u_\epsilon\right)\right) d x .
  \end{equation}
  
  Recall that by Step 1, we have
  $$
  \lim _{\epsilon \rightarrow 0} \int_{\Omega} f_\epsilon T_1\left(u_\epsilon-T_l\left(u_\epsilon\right)\right) d x=\int_{\Omega} f T_1\left(u-T_I(u)\right) d x .
  $$
  So, using the fact that $T_1\left(u-T_l(u)\right) \longrightarrow 0$ a.e. in $\Omega$ as $l \longrightarrow+\infty$ and the Lebesgue dominated convergence theorem, we obtain
  $$
  \lim _{l \rightarrow+\infty} \lim _{\epsilon \rightarrow 0} \int_{\Omega} f_\epsilon T_1\left(u_\epsilon-T_l\left(u_\epsilon\right)\right) d x=0 .
  $$
  Now, let us see that $\lim _{l \rightarrow+\infty} \lim _{\epsilon \rightarrow 0} \int_{\Omega} F \cdot D T_1\left(u_\epsilon-T_l\left(u_\epsilon\right)\right) d x=0$. Indeed we begin by proving that
  $$
  \lim _{l \rightarrow+\infty} \lim _{\epsilon \rightarrow 0} \sum_{i=1}^{N}\int_{\left[|<| u_\epsilon < l+1\right]}\left|D u_\epsilon\right|^{p_i} d x=0
  $$
  
  Thanks to $(\mathbf{H}_{1})$, we have
  $$
  \begin{aligned}
  & \lambda \sum_{i=1}^{N}\int_{[l<\left|u_\epsilon\right|<l+1 ]}\left|D u_\epsilon\right|^{p_i} d x \leq \int_{\left[ l <\left|u_\epsilon\right| <l+1\right]} a\left(x, D u_\epsilon\right) \cdot D u_\epsilon d x \\
  & \leq \int_{\Omega} f_\epsilon T_1\left(u_\epsilon-T_I\left(u_\epsilon\right)\right) d x+\int_{\left[ l <\left|u_\epsilon\right| <l+1\right]} F \cdot D u_\epsilon d x . \\
  &
  \end{aligned}
  $$
  Using Young's inequality. for every $\tilde{\epsilon}>0$, we get
  
  $$
  \int_{\left[ l <\left|u_\epsilon\right| <l+1\right]} F \cdot D u_\epsilon d x \leq \sum_{i=1}^{N}\frac{(\tilde{\epsilon})^{1-p_i^{\prime}}}{p_i^{\prime}} \int_{\left[ l <\left|u_\epsilon\right| <l+1\right]}|F|^{p_i^{\prime}} d x+ \sum_{i=1}^{N}\frac{\tilde{\epsilon}}{p_i} \int_{\left[ l <\left|u_\epsilon\right| <l+1\right]}\left|D u_\epsilon\right|^{p_i} d x .
  $$
  
  Taking $\tilde{\epsilon}=\frac{p_i}{2} \lambda$ we obtain
  $$
  \frac{\lambda}{2}\sum_{i=1}^{N} \int_{\left[ l <\left|u_\epsilon\right| <l+1\right]}\left|D u_\epsilon\right|^{p_i} d x \leq \int_{\Omega} f_\epsilon T_1\left(u_\epsilon-T_I\left(u_\epsilon\right)\right) d x+C(\lambda, p_i) \sum_{i=1}^{N}\int_{\left[ l <\left|u_\epsilon\right| <l+1\right]}|F|^{p_i^{\prime}} d x .
  $$
  Furthermore
  $$
  \sum_{i=1}^{N}\int_{\left[ l <\left|u_\epsilon\right| <l+1\right]}|F|^{p_i^{\prime}} d x \leq \sum_{i=1}^{N}\int_{[| u_\epsilon| >l]}|F|^{p_i^{\prime}} d x
  $$
  and
  $$
  \lim _{\epsilon \rightarrow 0}\sum_{i=1}^{N} \int_{[| u_\epsilon| >l]}|F|^{p_i^{\prime}} d x \leq \sum_{i=1}^{N}\int_{[| u | \geq 1]}|F|^{p_i^{\prime}} d x
  $$
  Since
  $$
  \operatorname{meas}([|u| \geq l]) \longrightarrow 0, \quad \text { as } l \longrightarrow+\infty \text { by }(\ref{2.2})
  $$
  we have
  $$
  \lim _{l \rightarrow+\infty} \lim _{\epsilon \rightarrow 0}\sum_{i=1}^{N} \int_{\left[ l <\left|u_\epsilon\right|<l+1\right]}|F|^{p_i^{\prime}} d x=0 .
  $$
  Hence, $\lim _{l \rightarrow+\infty} \lim _{\epsilon \rightarrow 0} \sum_{i=1}^{N}\int_{\left[ l <\left|u_\epsilon\right| <l+1\right]}\left|D u_\epsilon\right|^{{p_i}} d x=0$. Now, using the above results we obtain
  $$
  \lim _{l \rightarrow+\infty} \lim _{\epsilon \rightarrow 0} \int_{\Omega} F \cdot D T_1\left(u_\epsilon-T_l\left(u_\epsilon\right)\right) d x=0 .
  $$
  Then passing to the limit as $\epsilon \longrightarrow 0$ and as $l \longrightarrow+\infty$ in (\ref{3.14}) we get (\ref{3.13}).

  Step 3: We prove that for every $k>0$,
  \begin{equation}\label{3.15}
  \limsup _{\epsilon \rightarrow 0} \int_{\Omega} a\left(x, D T_k\left(u_\epsilon\right)\right) \cdot\left[D T_k\left(u_\epsilon\right)-D T_k(u)\right] d x \leq 0.
  \end{equation}
  
  Indeed, for all $l>0$ we define the function $h_l$ by $h_l(r)=\inf \left[1,(l+1-|r|)^{+}\right\}$. For $l>k$, we have
  
  $$
  \begin{aligned}
  \int_{\Omega} a\left(x, D u_\epsilon\right) \cdot D\left[h_l\left(u_\epsilon\right)\left(T_k\left(u_\epsilon\right)-T_k(u)\right)\right] d x=&
   \int_{\left[\left|u_\epsilon\right| \leq k\right]} h_l\left(u_\epsilon\right) a\left(x, D T_k\left(u_\epsilon\right)\right).[D T_k\left(u_\epsilon\right)-DT_k(u)]d x\\
  & +\int_{\left[\left|u_\epsilon\right|>k\right]} h_l\left(u_\epsilon\right) a\left(x, D u_\epsilon\right)\left(-D T_k(u)\right) d x \\
  & +\int_{\Omega} h_l^{\prime}\left(u_\epsilon\right)\left(T_k\left(u_\epsilon\right)-T_k(u)\right) a\left(x, D u_\epsilon\right). D u_\epsilon d x \\
  := & \left(E_1\right)+\left(E_2\right)+\left(E_3\right) .
  \end{aligned}
  $$
  Since $l>k$, on the set $\left[\left|u_\epsilon\right| \leq k\right]$ we have $h_l\left(u_\epsilon\right)=1$ so that we can write $\left(E_1\right)$ as
  $$
  \begin{aligned}
  \left(E_1\right) & =\int_{\left[\left|u_\epsilon\right| \leq k\right]} a\left(x, D T_k\left(u_\epsilon\right)\right) \cdot\left[D T_k\left(u_\epsilon\right)-D T_k(u)\right] d x \\
  & =\int_{\Omega} a\left(x, D T_k\left(u_\epsilon\right)\right) \cdot\left[D T_k\left(u_\epsilon\right)-D T_k(u)\right] d x .
  \end{aligned}
  $$
  Hence we obtain
  $$
  \limsup _{\epsilon \rightarrow 0}\left(E_1\right)=\limsup _{\epsilon \rightarrow 0} \int_{\Omega} a\left(x, D T_k\left(u_\epsilon\right)\right) \cdot\left[D T_k\left(u_\epsilon\right)-D T_k(u)\right] d x .
  $$
  Let us write the term $\left(E_2\right)$ as
  $$
  \left(E_2\right)=-\int_{\left[\left|u_\epsilon\right| > k\right]} h_l\left(u_\epsilon\right) c\left(x, D T_{l+1}\left(u_\epsilon\right)\right) \cdot D T_k(u) d x .
  $$
  Using Lebesgue dominated convergence theorem, we get
  $$
  \lim _{\epsilon \rightarrow 0}\left(E_2\right)=-\int_{\left[\left|u_\epsilon\right| \geq k\right]} h_l(u) H_{l+1} \cdot D T_k(u) d x=0 .
  $$
  For the term $\left(E_3\right)$, we have
  $$
  \begin{aligned}
  \left(-\int_{\Omega} h_l^{\prime}\left(u_\epsilon\right)\left(T_k\left(u_\epsilon\right)-T_k(u)\right) a\left(x, D u_\epsilon\right). Du_\epsilon d x\right) & \leq\left|\int_{\Omega} h_l^{\prime}\left(u_\epsilon\right)\left(T_k\left(u_\epsilon\right)-T_k(u)\right) a\left(x, D u_\epsilon\right) \cdot D u_\epsilon d x\right| \\
  & \leq 2 k \int_{\left[ l <\left|u_\epsilon\right| <l+1\right]} a\left(x, D u_\epsilon\right) \cdot D u_\epsilon d x .
  \end{aligned}
  $$

  Using the result of Step 2 we deduce that
  $$
  \limsup _{l \rightarrow+\infty} \limsup _{\epsilon \rightarrow 0}\left(-\int_{\Omega} h_l^{\prime}\left(u_\epsilon\right)\left(T_k\left(u_\epsilon\right)-T_k(u)\right) a\left(x, D u_\epsilon\right) . D u_\epsilon d x\right) \leq 0 .
  $$
  Applying (\ref{3.10}) with $h$ replaced by $h_l, l>k$ we get
  $$
  \begin{aligned}
  & \limsup _{\epsilon \rightarrow 0} \int_{\Omega} a\left(x, D T_k\left(u_\epsilon\right)\right) \cdot\left[D\left(T_k\left(u_\epsilon\right)- DT_k(u)\right)\right] d x \\
  & \leq \limsup _{\epsilon \rightarrow 0}\left(-\int_{\Omega} h_l^{\prime}\left(u_\epsilon\right)\left(T_k\left(u_\epsilon\right)-T_k(u)\right) a\left(x, D u_\epsilon\right) \cdot D u_\epsilon d x\right) \text {, } \\
  &
  \end{aligned}
  $$
  so that letting $l \longrightarrow+\infty$ yields the inequality (\ref{3.15}).
  
  Step 4: In this step we prove by standard monotonicity arguments that for all $k>0, H_k=$ $a\left(x, D T_k(u)\right)$ a.e. in $\Omega$. Let $\varphi \in \mathcal{D}(\Omega)$ and $\tilde{\alpha} \in \mathbb{R}^*$. Using (\ref{3.15}), we have
  $$
  \begin{aligned}
  \tilde{\alpha} \lim _{\epsilon \rightarrow 0} \int_{\Omega} a\left(x, D T_k\left(u_\epsilon\right)\right) \cdot D \varphi d x & \geq \limsup _{\epsilon \rightarrow 0} \int_{\Omega} a\left(x, D T_k\left(u_\epsilon\right)\right) \cdot\left[D T_k\left(u_\epsilon\right)-D T_k(u)+D(\tilde{\alpha} \varphi)\right] d x \\
  & \geq \underset{\epsilon \rightarrow 0}{\limsup } \int_{\Omega} a\left(x, D\left(T_k(u)-\tilde{\alpha} \varphi\right)\right) \cdot\left[D T_k\left(u_\epsilon\right)-D T_k(u)+D(\tilde{\alpha} \varphi)\right] d x \\
  & \geq \limsup _{\epsilon \rightarrow 0} \int_{\Omega} a\left(x, D\left(T_k(u)-\tilde{\alpha} \varphi\right)\right) \cdot D(\tilde{\alpha} \varphi) d x \\
  & \geq \tilde{\alpha} \int_{\Omega} a\left(x, D\left(T_k(u)-\tilde{\alpha} \varphi\right)\right) \cdot D \varphi d x .
  \end{aligned}
  $$
  Dividing by $\tilde{\alpha}>0$ and by $\tilde{\alpha}<0$, passing the limit with $\tilde{\alpha} \longrightarrow 0$ we obtain
  $$
  \lim _{\epsilon \rightarrow 0} \int_{\Omega} a\left(x, D T_k\left(u_\epsilon\right)\right) \cdot D \varphi d x=\int_{\Omega} a\left(x, D T_k(u)\right) \cdot D \varphi d x .
  $$
  This means that $\forall k>0 . \int_{\Omega} H_k \cdot D \varphi d x=\int_{\Omega} a\left(x, D T_k(u)\right) . D \varphi d x$ and then
  $$
  H_k=a\left(x, D T_k(u)\right) \text { in } \mathcal{D}^{\prime}(\Omega)
  $$
  for all $k>0$. Hence $H_k=a\left(x, D T_k(u)\right)$ a.e. in $\Omega$ and so $a\left(x, D T_k\left(u_\epsilon\right)\right) \rightharpoonup a\left(x, D T_k(u)\right)$ weakly in $\prod_{i=1}^{N}L^{p_{i}^{^{%
  \prime }}}(\Omega )$.
  
  (ii) Thanks to (\ref{3.15}), we deduce that for all $k>0$ :
  $$
  \lim _{\epsilon \rightarrow 0} \int_{\Omega}\left[a\left(x, D T_k\left(u_\epsilon\right)\right)-a\left(x, D T_k(u)\right)\right] \cdot\left[D T_k\left(u_\epsilon\right)-D T_k(u)\right] d x=0 .
  $$
  Since
  $$
  g_\epsilon(.):=\left[a\left(., D T_k\left(u_\epsilon\right)\right)-a\left(., D T_k(u)\right)\right] \cdot\left[D T_k\left(u_\epsilon\right)-D T_k(u)\right] \geq 0,
  $$
  
  up to a subsequence we have
  $$
  g_\epsilon(.) \longrightarrow 0 \text { a.e. in } \Omega \text {. }
  $$
  This implies that, there exists $Z \subset \Omega$ such that meas $(Z)=0$ and $g_\epsilon(.) \longrightarrow 0$ a.e. in $\Omega \backslash Z$. Let $x \in$ $\Omega \backslash Z$. Using the assumptions $(\mathbf{H}_{1})$ and $(\mathbf{H}_{2})$, it follows that the sequence $\left(D T_k\left(u_\epsilon(x)\right)\right)_{\epsilon>0}$ is bounded in $\mathbb{R}^N$ so we can extract a subsequence which converges to some $\tilde{\xi}$ in $\mathbb{R}^N$. Passing to the limit in the expression of $g_\epsilon(x)$, we get
  $$
  0=\left[a(x, \tilde{\xi})-a\left(x, D T_k(u(x))\right)\right] \cdot\left[\tilde{\xi}-D T_k(u(x))\right] .
  $$
  This yields $\tilde{\xi}=D T_k(u(x)), \forall x \in \Omega \backslash Z$. As the limit does not depend on the subsequence, the whole sequence $\left(D T_k\left(u_\epsilon(x)\right)\right)_{\epsilon>0}$ converges to $\tilde{\xi}$ in $\mathbb{R}^N$. This means that
  $$
  D T_k\left(u_\epsilon\right) \longrightarrow D T_k(u) \text { a.e. in } \Omega \text {. }
  $$
  (iii) The continuity of $a(x, \xi)$ with respect to $\xi \in \mathbb{R}^N$ gives us
  $$
  a\left(x, D T_k\left(u_\epsilon\right)\right) \longrightarrow a\left(x, D T_k(u)\right) \text { a.e. in } \Omega
  $$
  and then we obtain
  $$
  a\left(x, D T_k\left(u_\epsilon\right)\right) \cdot D T_k\left(u_\epsilon\right) \longrightarrow a\left(x, D T_k(u)\right) \cdot D T_k(u) \text { a.e. in } \Omega .
  $$
  Setting $y_\epsilon=a\left(x, D T_k\left(u_\epsilon\right)\right) . D T_k\left(u_\epsilon\right)$ and $y=a\left(x, D T_k(u)\right) . D T_k(u)$, we have
  $$
  \left\{\begin{array}{l}
  y_\epsilon \geq 0, \quad y_\epsilon \longrightarrow y \text { a.e. in } \Omega, y \in L^1(\Omega), \\
  \int_{\Omega} y_\epsilon d x \longrightarrow \int_{\Omega} y d x .
  \end{array}\right.
  $$
  Since
  $$
  \int_{\Omega}\left|y_\epsilon-y\right| d x=2 \int_{\Omega}\left(y-y_\epsilon\right)^{+} d x+\int_{\Omega}\left(y_\epsilon-y\right) d x
  $$
  and $\left(y-y_\epsilon\right)^{+} \leq y$ it follows by the Lebesgue dominated convergence theorem that
  $$
  \lim _{\epsilon \rightarrow 0} \int_{\Omega}\left|y_\epsilon-y\right| d x=0,
  $$
  which means that
  $$
  a\left(x, D T_k\left(u_\epsilon\right)\right) \cdot D T_k\left(u_\epsilon\right) \longrightarrow a\left(x, D T_k(u)\right) \cdot D T_k(u) \text { strongly in } L^1(\Omega) .
  $$
  (iv) By $(\mathbf{H}_{1})$, we have $\sum_{i=1}^{N}\left|D T_k\left(u_\epsilon\right)\right|^{p_i} \leq \frac{1}{\lambda}\sum_{i=1}^{N} a_i\left(x, D T_k\left(u_\epsilon\right)\right) . D T_k\left(u_\epsilon\right)$. Using the $L^1$-convergence of (iii) and the generalized dominated convergence theorem, we obtain the result of (iv).
  \end{proof}

  \begin{lemma}\label{L3.1}
   For any $h \in C_c^1(\mathbb{R})$ and $\xi \in W_0^{1, {\overrightarrow{p}}}(\Omega) \cap L^{\infty}(\Omega)$.
  $$
  D\left[h\left(u_\epsilon\right) \xi\right] \longrightarrow D[h(u) \xi] \text { strongly in } L^{\overrightarrow{p}}(\Omega) \text { as } \epsilon \longrightarrow 0 .
  $$
  \end{lemma}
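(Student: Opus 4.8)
The plan is to expand the gradient of the product by the chain and product rules and to treat the two resulting pieces separately, the whole argument resting on the strong convergence of truncated gradients from Lemma \ref{P3.2}(iv) together with the pointwise convergence $u_\epsilon \to u$ from Proposition \ref{P3.1}(iv) and Lebesgue's dominated convergence theorem.

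First I would fix $k>0$ with $\operatorname{supp}(h)\subset(-k,k)$. Since $h\in C_c^1(\mathbb{R})$ is bounded and Lipschitz and $u_\epsilon\in W_0^{1,\overrightarrow{p}}(\Omega)$, the chain rule gives $h(u_\epsilon)\in W^{1,\overrightarrow{p}}(\Omega)\cap L^\infty(\Omega)$; as also $\xi\in W_0^{1,\overrightarrow{p}}(\Omega)\cap L^\infty(\Omega)$, the product rule yields
$$
D[h(u_\epsilon)\xi] = h(u_\epsilon)\,D\xi + h'(u_\epsilon)\,\xi\,D u_\epsilon \qquad\text{a.e. in }\Omega,
$$
and analogously $D[h(u)\xi] = h(u)\,D\xi + h'(u)\,\xi\,D u$, where $Du$ is read off the truncations. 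Because $h'$ vanishes outside $(-k,k)$ we have $h'(u_\epsilon)\,D u_\epsilon = h'(T_k(u_\epsilon))\,D T_k(u_\epsilon)$ a.e.\ (on $[\,|u_\epsilon|<k\,]$ one has $Du_\epsilon = D T_k(u_\epsilon)$, and on $[\,|u_\epsilon|\ge k\,]$ one has $h'(u_\epsilon)=0$), and likewise for the limit. Hence it suffices to establish, in $L^{\overrightarrow{p}}(\Omega)$,
$$
h(u_\epsilon)\,D\xi \longrightarrow h(u)\,D\xi
\qquad\text{and}\qquad
h'(T_k(u_\epsilon))\,\xi\,D T_k(u_\epsilon) \longrightarrow h'(T_k(u))\,\xi\,D T_k(u).
$$

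For the first convergence, Proposition \ref{P3.1}(iv) gives $u_\epsilon\to u$ a.e.\ in $\Omega$, so $h(u_\epsilon)\to h(u)$ a.e.\ by continuity of $h$, while $|h(u_\epsilon)\,D^i\xi|\le \|h\|_{L^\infty}\,|D^i\xi|\in L^{p_i}(\Omega)$ for each $i$; dominated convergence applies componentwise. For the second convergence I would split, for each index $i$,
$$
h'(T_k(u_\epsilon))\,\xi\,D^i T_k(u_\epsilon) - h'(T_k(u))\,\xi\,D^i T_k(u)
= h'(T_k(u_\epsilon))\,\xi\,\bigl(D^i T_k(u_\epsilon) - D^i T_k(u)\bigr) + \bigl(h'(T_k(u_\epsilon)) - h'(T_k(u))\bigr)\,\xi\,D^i T_k(u).
$$
The $L^{p_i}$-norm of the first term is at most $\|h'\|_{L^\infty}\|\xi\|_{L^\infty}\,\|D^i T_k(u_\epsilon)-D^i T_k(u)\|_{L^{p_i}(\Omega)}$, which tends to $0$ by Lemma \ref{P3.2}(iv). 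For the second term, $h'(T_k(u_\epsilon))-h'(T_k(u))\to 0$ a.e.\ (continuity of $h'$ and $u_\epsilon\to u$ a.e.), and the integrand is dominated by $2\|h'\|_{L^\infty}\|\xi\|_{L^\infty}\,|D^i T_k(u)|\in L^{p_i}(\Omega)$, so dominated convergence again gives convergence to $0$. Summing over $i=1,\dots,N$ finishes the proof.

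I do not expect a genuine obstacle. The only mildly delicate points are the reduction of $h'(u_\epsilon)\,D u_\epsilon$ to the truncated quantity $h'(T_k(u_\epsilon))\,D T_k(u_\epsilon)$ and the legitimacy of the product rule; both are routine once $\operatorname{supp}(h)$ is enclosed in an open interval $(-k,k)$ and once one notes $h(u_\epsilon),\,\xi\in W^{1,\overrightarrow{p}}(\Omega)\cap L^\infty(\Omega)$. Everything else is a direct combination of Proposition \ref{P3.1}(iv), Lemma \ref{P3.2}(iv) and Lebesgue's dominated convergence theorem.
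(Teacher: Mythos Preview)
Your proof is correct and follows essentially the same route as the paper: expand $D[h(u_\epsilon)\xi]$ by the product and chain rules, replace $Du_\epsilon$ by $DT_k(u_\epsilon)$ using the compact support of $h$, handle $h(u_\epsilon)D\xi$ by dominated convergence, and handle $h'(u_\epsilon)\xi DT_k(u_\epsilon)$ via Lemma~\ref{P3.2}(iv). The only cosmetic difference is that the paper dispatches the second term in one stroke by citing the generalized dominated convergence theorem (with the moving dominant $C\,|DT_k(u_\epsilon)|$), whereas you perform the equivalent explicit two-term splitting; both arguments are the same in substance.
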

  
  \begin{proof}

   For any $h \in C_c^1(\mathbb{R})$ and $\xi \in W_0^{1, {\overrightarrow{p}}}(\Omega) \cap L^{\infty}(\Omega)$. we have
  $$
  \begin{aligned}
  D\left[h\left(u_\epsilon\right) \xi\right] & =h\left(u_\epsilon\right) D \xi+h^{\prime}\left(u_\epsilon\right) \xi D u_\epsilon \\
  & \left.=h\left(u_\epsilon\right) D \xi+h^{\prime}\left(u_\epsilon\right) \xi D T_l\left(u_\epsilon\right) \text { for } l>0 \text { such that } \operatorname{supp}(h) \subset\right]-l,+l[
  \end{aligned}
  $$
  Using the Lebesgue dominated convergence theorem, we get
  $$
  h\left(u_\epsilon\right) D \xi \longrightarrow h(u) D \xi \quad \text { strongly in } L^{\overrightarrow{p}}(\Omega) \text { as } \epsilon \longrightarrow 0 \text {. }
  $$
  Moreover, since $\left|h^{\prime}\left(u_\epsilon\right) \xi D T_l\left(u_\epsilon\right)\right| \leq C\left|D T_l\left(u_\epsilon\right)\right|$. then using the generalized convergence theorem and Lemma \ref{P3.2} (iv) we deduce that
  $$
  h^{\prime}\left(u_\epsilon\right) \xi D T_I\left(u_\epsilon\right) \longrightarrow h^{\prime}(u) \xi D T_l(u)=h^{\prime}(u) \xi D u \quad \text { strongly in } L^{\overrightarrow{p}}(\Omega) \text { as } \epsilon \longrightarrow 0 .
  $$
  So Lemma \ref{L3.1} follows.
  \end{proof}
  
  Now, to pass to the limit in $\beta_{\varepsilon}\left(u_{\varepsilon}\right)$. let us consider the function $h_0=h_{l_0}, l_0>0$ to be fixed later such that
  $$
  \left\{\begin{array}{l}
  h_0 \in C_c^1(\mathbb{R}), \quad h_0(r) \geq 0, \quad \forall r \in \mathbb{R}, \\
  h_0(r)=1 \text { if }|r| \leq l_0 \text { and } h_0(r)=0 \text { if }|r| \geq l_0+1 .
  \end{array}\right.
  $$
  Since, for any $k>0$, $\left(h_k\left(u_\epsilon\right) \beta_\epsilon\left(u_\epsilon\right)\right)_{\epsilon>0}$ is bounded in $L^1(\Omega)$, there exists $z_k \in \mathcal{M}_b(\Omega)$, such that
  $$
  h_k\left(u_\epsilon\right) \beta_\epsilon\left(u_\epsilon\right) \stackrel{*}{\rightharpoonup} z_k \quad \text { in } \mathcal{M}_b(\Omega) \text { as } \epsilon \longrightarrow 0 .
  $$
  Moreover, for any $\xi \in W_0^{1, {\overrightarrow{p}}}(\Omega) \cap L^{\infty}(\Omega)$. we have
  $$
  \int_{\Omega} \xi d z_k=\int_{\Omega} \xi h_k(u) d \mu-\int_{\Omega} a(x, D u) \cdot D\left(h_k(u) \xi\right) d x,
  $$
  which implies that $z_k \in \mathcal{M}_b^{\overrightarrow{p}}(\Omega)$ and, for any $k \leq l$,
  $$
  z_{\mathrm{k}}=z_{\mathrm{l}} \text { on }\left[\left|T_k(u)\right|<k\right] \text {. }
  $$
  Let us consider the Radon measure $z$ defined by
  \begin{equation}\label{3.16}
  \begin{cases}z=z_k, & \text { on }\left[\left|T_k(u)\right|<k\right] \text { for } k \in \mathbb{N}^*, \\ z=0 & \text { on } \bigcap_{k \in \mathbb{N}^*}\left[\left|T_k(u)\right|=k\right] .
  \end{cases}
  \end{equation}

  For any $h \in \mathcal{C}_c(\mathbb{R}), h(u) \in L^{\infty}(\Omega, d|z|)$ and
  $$
  \int_{\Omega} h(u) \xi d z=-\int_{\Omega} a(x, D u) \cdot D(h(u) \xi) d x+\int_{\Omega} h(u) \xi d \mu,
  $$
  for any $\xi \in W_0^{1, {\overrightarrow{p}}}(\Omega) \cap L^{\infty}(\Omega)$. Indeed, let $k_0>0$ be such that $\operatorname{supp}(h) \subseteq\left[-k_0,+k_0\right]$.
  $$
  \begin{aligned}
  \int_{\Omega} h(u) \xi d z & =\int_{\Omega} h(u) \xi d z_{k_0} \\
  & =-\lim _{\varepsilon \rightarrow 0} \int_{\Omega} a\left(x, D u_{\varepsilon}\right) \cdot D\left(h\left(u_{\varepsilon}\right) \xi\right)+\lim _{\varepsilon \rightarrow 0} \int_{\Omega} h\left(u_{\varepsilon}\right) \xi d \mu_{\varepsilon} \\
  & =-\lim _{\varepsilon \rightarrow 0} \int_{\Omega} a\left(x, D T_{k_0}\left(u_{\varepsilon}\right)\right) \cdot D\left(h\left(u_{\varepsilon}\right) \xi\right)+\lim _{\varepsilon \rightarrow 0} \int_{\Omega} h\left(u_{\varepsilon}\right) \xi d \mu_{\varepsilon} \\
  & =-\int_{\Omega} a(x, D u) \cdot D(h(u) \xi)+\int_{\Omega} h(u) \xi d \mu .
  \end{aligned}
  $$
  Moreover, we have
  \begin{lemma}\label{L3.2}

   The Radon-Nikodym decomposition of the measure $z$ given by (\ref{3.16}) with respect to $\mathcal{L}^N$.
  $$
  z=w \mathcal{L}^N+v, \quad \text { with } v \perp \mathcal{L}^N
  $$
  satisfies the following properties
  $$
  \left\{\begin{array}{l}
  w \in \beta(u) \quad \mathcal{L}^N \text {-a.e. in } \Omega, \quad w \in L^1(\Omega), \quad v \in \mathcal{M}_b^{\overrightarrow{p}}(\Omega), \\
  v^{+} \text {is concentrated on }[u=M] \cap[u \neq+\infty] \text { and } \\
  v^{-} \text {is concentrated on }[u=m] \cap[u \neq-\infty] .
  \end{array}\right.
  $$
  
  \end{lemma}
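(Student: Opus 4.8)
The plan is to split the argument into three steps: identify the Radon--Nikodym density $w$ of $z$ and show it is a section of $\beta$ at $u$; show that the singular part $v=z-w\,\mathcal{L}^N$ charges no set on which $u$ stays strictly inside $\operatorname{dom}(\beta)$; and finally fix the sign of $v$ at the two endpoints $m$ and $M$. The core of the work is the second step, the localisation of the singular measure; the first and third are comparatively routine.

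\emph{Step 1 (the density $w$).} Since $(\beta_\epsilon(u_\epsilon))_\epsilon$ is bounded in $L^1(\Omega)$ by Proposition \ref{P3.1}(ii), the biting lemma (Lemma \ref{L2.4}) gives $w\in L^1(\Omega)$, a subsequence, and sets $E_{j+1}\subset E_j\subset\Omega$ with $|E_j|\to0$ such that $\beta_\epsilon(u_\epsilon)\to w$ in $L^1(\Omega\setminus E_j)$ for every $j$; after a further subsequence, $\beta_\epsilon(u_\epsilon)\to w$ a.e.\ in $\Omega$ as well. Since $u_\epsilon\to u$ a.e.\ (Proposition \ref{P3.1}(iv)) and $\beta_\epsilon\to\beta$ in the sense of graphs, the argument of Lemma \ref{L2.3} on each $\Omega\setminus E_j$ yields $w\in\beta(u)$ a.e.\ in $\Omega$. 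For the decomposition, note that on $\Omega\setminus E_j$ the sequence $h_k(u_\epsilon)\beta_\epsilon(u_\epsilon)$ converges in $L^1$ to $h_k(u)w$, being the product of an $L^1$-convergent sequence by a bounded, a.e.-convergent one; hence $z_k$ restricted to $\Omega\setminus E_j$ is absolutely continuous with density $h_k(u)w$. As $h_k(u)=1$ on $[|u|<k]$ and $z=z_k$ there, letting $j\to\infty$ and using $\mathcal{L}^N([|u|=\infty])=0$ (a consequence of Lemma \ref{L2.1}) shows the singular part of $z$ is carried by the $\mathcal{L}^N$-null set $\bigl(\bigcap_j E_j\bigr)\cup[|u|=\infty]$. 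Thus $w$ is the Radon--Nikodym density of $z$ and $v=z-w\,\mathcal{L}^N\perp\mathcal{L}^N$; since $z\in\mathcal{M}_b^{\overrightarrow{p}}(\Omega)$ (proved just above) while $w\,\mathcal{L}^N$ is diffuse, also $v\in\mathcal{M}_b^{\overrightarrow{p}}(\Omega)$, so $v$ ignores sets of zero $\overrightarrow{p}$-capacity.

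\emph{Step 2 (localisation).} Let $(\theta_n)_n\subset C^1_c(\mathbb{R})$ be increasing, $0\le\theta_n\le1$, with $\operatorname{supp}\theta_n$ a compact subset of $\operatorname{int}(\operatorname{dom}(\beta))=(m,M)$ and $\theta_n\uparrow\mathbf{1}_{(m,M)}$ pointwise. Fix $n$, choose $\psi\in C^1_c((m,M))$ with $\psi\equiv1$ on a neighbourhood of $\operatorname{supp}\theta_n$, and take $k$ so large that $\operatorname{supp}\psi\subset[-k,k]$, so that $\theta_n h_k\equiv\theta_n$ and $\theta_n\psi\equiv\theta_n$. Since $\beta$ is maximal monotone, $|\beta_\epsilon|\le|\beta^0|$ is bounded on $\operatorname{supp}\psi$, hence $\psi(u_\epsilon)\beta_\epsilon(u_\epsilon)$ is bounded in $L^\infty(\Omega)$ and, by Step 1 and dominated convergence, converges to $\psi(u)w$ strongly in $L^1(\Omega)$. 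Writing $\theta_n(u_\epsilon)\,h_k(u_\epsilon)\beta_\epsilon(u_\epsilon)=\theta_n(u_\epsilon)\,\psi(u_\epsilon)\beta_\epsilon(u_\epsilon)$ and passing to the limit — using on the left that $h_k(u_\epsilon)\beta_\epsilon(u_\epsilon)\overset{*}{\rightharpoonup}z_k$ together with $\theta_n(u_\epsilon)\to\theta_n(u)$ in $\overrightarrow{p}$-capacity (which follows from the strong convergence $T_k(u_\epsilon)\to T_k(u)$ in $W_0^{1,\overrightarrow{p}}(\Omega)$, Lemma \ref{P3.2}(iv) and Proposition \ref{P3.1}(iv)) and the diffuseness of $z_k$, and on the right the $L^1$-convergence just obtained — we get $\theta_n(u)\,z=\theta_n(u)\,w\,\mathcal{L}^N$ on $[|u|<k]$, i.e.\ $\theta_n(u)\,v=0$ there. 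Letting $k\to\infty$ and then $n\to\infty$ gives $|v|([m<u<M])=0$. Finally, $u\in\overline{\operatorname{dom}(\beta)}\subseteq[m,M]$ a.e., the quasicontinuous representative of $u$ satisfies $m\le u\le M$ quasi-everywhere, and $[|u|=\infty]=\bigcap_k[|u|>k]$ has zero $\overrightarrow{p}$-capacity (bound $\operatorname{cap}_{\overrightarrow{p}}([|u|>k])$ by $\operatorname{cap}_{\overrightarrow{p}}(\{|T_{k+1}(u)|\ge k\})$ and use \eqref{2.1}); since $v$ is diffuse, it follows that $v$ is concentrated on $([u=M]\cap[u\neq+\infty])\cup([u=m]\cap[u\neq-\infty])$.

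\emph{Step 3 (sign of $v$).} From $0\in\beta(0)$ we get $J_\epsilon(0)=0$ and $\beta_\epsilon(0)=0$; as $\beta_\epsilon$ is nondecreasing, $\beta_\epsilon(u_\epsilon)\ge0$ on $[u_\epsilon\ge0]$. For $\theta\in C_c((0,\infty))$ with $\theta\ge0$ and $\xi\in W_0^{1,\overrightarrow{p}}(\Omega)\cap L^\infty(\Omega)$ with $\xi\ge0$, the quantity $\beta_\epsilon(u_\epsilon)\theta(u_\epsilon)\xi$ is nonnegative, so passing to the limit exactly as in the derivation of $\int_\Omega h(u)\xi\,dz=-\int_\Omega a(x,Du)\cdot D(h(u)\xi)\,dx+\int_\Omega h(u)\xi\,d\mu$ recorded above gives $\int_\Omega\theta(u)\xi\,dz\ge0$. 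Approximating $\mathbf{1}_\Omega$ by such $\xi$ and then letting $\theta\uparrow\mathbf{1}_{(0,\infty)}$ shows $z\lfloor[u>0]\ge0$; since $[u=M]\subset[u>0]$ when $M>0$ and $\mathcal{L}^N([u=M])=0$, this gives $v\lfloor[u=M]=z\lfloor[u=M]\ge0$, hence $v^-\equiv0$ on $[u=M]\cap[u\neq+\infty]$. (If $M=0$ then $0\in\operatorname{dom}(\beta)$, so $\beta_\epsilon$ is locally bounded near $0$ and $v\lfloor[u=0]=0$; if $M=+\infty$ there is nothing to prove, Step 2 having already placed $v$ on $[u=m]\cap[u\neq-\infty]$.) A symmetric argument at the endpoint $m$ gives $v\lfloor[u=m]\le0$, hence $v^+\equiv0$ on $[u=m]\cap[u\neq-\infty]$. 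Combined with Step 2, this shows $v^+$ is concentrated on $[u=M]\cap[u\neq+\infty]$ and $v^-$ on $[u=m]\cap[u\neq-\infty]$, completing the proof. The only genuinely subtle passage is the limit in Step 2 that identifies $z$ on the sublevel sets of the quasicontinuous representative of $u$; everything else combines the biting lemma, the standard Yosida estimates and the capacity bounds coming from $T_k(u)\in W_0^{1,\overrightarrow{p}}(\Omega)$.
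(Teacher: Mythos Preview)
Your overall strategy—biting lemma for the density, Yosida boundedness on compact subsets of $(m,M)$ for the localisation, and the sign of $\beta_\epsilon$ at the endpoints—is a reasonable alternative to the paper's, but two of its key passages do not go through as written. First, the biting lemma (Lemma~\ref{L2.4}) yields only \emph{weak} $L^1$ convergence of $\beta_\epsilon(u_\epsilon)$ on $\Omega\setminus E_j$; this does not give a.e.\ convergence along a further subsequence, so the dominated-convergence step you invoke in Step~2 (``by Step~1 and dominated convergence, converges to $\psi(u)w$ strongly in $L^1$'') is unsupported. Second, the limit on the ``left'' in Step~2 is not justified: weak-$*$ convergence $h_k(u_\epsilon)\beta_\epsilon(u_\epsilon)\rightharpoonup z_k$ in $\mathcal{M}_b(\Omega)$ does \emph{not} commute with multiplication by a factor $\theta_n(u_\epsilon)$ that merely converges in capacity (or a.e.), even when the limit measure is diffuse—there is no theorem that turns a quasicontinuous, cap-convergent multiplier into an admissible test function for a weak-$*$ convergent sequence of measures. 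Both difficulties disappear if you route the limit through the equation rather than through the measure convergence: for $\psi\in C_c^1((m,M))$ and $\xi\in C_c^1(\Omega)$, testing \eqref{3.2} with $\psi(u_\epsilon)\xi$ and passing to the limit via Lemma~\ref{L3.1} and Lemma~\ref{P3.2}(i) gives $\int_\Omega \psi(u)\xi\,dz=\lim_{\epsilon\to 0}\int_\Omega\psi(u_\epsilon)\beta_\epsilon(u_\epsilon)\xi\,dx$, and since the right-hand integrands are bounded in $L^\infty$ (by $\sup_{\mathrm{supp}\,\psi}|\beta^0|$), one gets $|\int_\Omega\psi(u)\xi\,dz|\le C\|\xi\|_{L^1}$, hence $\psi(u)z\ll\mathcal{L}^N$ and $|v|([m<u<M])=0$. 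Your Step~1 identification of the singular support via $\bigcap_j E_j$ is then unnecessary.

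For comparison, the paper's proof handles all three conclusions at once with a single convexity argument. Writing $\beta_\epsilon=\partial j_\epsilon$, the subdifferential inequality $j(t)\ge j_\epsilon(u_\epsilon)+(t-u_\epsilon)\beta_\epsilon(u_\epsilon)$ is multiplied by $h(u_\epsilon)\xi\ge 0$ with $h\in C_c(\mathbb{R})$, integrated, and passed to the limit (again using the equation to identify $\lim_\epsilon\int_\Omega(t-u_\epsilon)h(u_\epsilon)\xi\,\beta_\epsilon(u_\epsilon)\,dx$). This yields the measure inequality $h(u)j(t)\ge h(u)j(u)+(t-u)h(u)z$ in $\mathcal{M}_b(\Omega)$; separating absolutely continuous and singular parts simultaneously gives $w\in\partial j(u)=\beta(u)$ a.e.\ and $(t-u)h(u)v\le 0$ for every $t\in\overline{\operatorname{dom}(j)}$, from which both the concentration of $v$ on $[u\in\{m,M\}]$ and the signs of $v^\pm$ follow immediately. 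This bypasses the biting lemma and the delicate product limits entirely.
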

  
  \begin{proof}

  Since, for any $\epsilon>0, z_\epsilon \in \partial j_\epsilon\left(u_\epsilon\right)$. we have
  $$
  j(t) \geq j_\epsilon(t) \geq j_\epsilon\left(u_\epsilon\right)+\left(t-u_\epsilon\right) z_\epsilon \quad \mathcal{L}^N \text {-a.e. in } \Omega, \forall t \in \mathbb{R} .
  $$
  Then for any $h \in \mathcal{C}_c(\mathbb{R}), h \geq 0, \xi \geq 0$ and $k>0$ such that supp $(h) \subseteq[-k, k]$, we have
  $$
  \xi h\left(u_\epsilon\right) j(t) \geq \xi h\left(u_\epsilon\right) j_\epsilon\left(u_\epsilon\right)+\left(t-u_\epsilon\right) \xi h\left(u_\epsilon\right) h_k\left(u_{\varepsilon}\right) z_\epsilon .
  $$
  In addition, for any $0<\epsilon<\tilde{\epsilon}$, we have
  $$
  \xi h\left(u_\epsilon\right) j(t) \geq \xi h\left(u_\epsilon\right) j_{\tilde{\epsilon}}\left(u_\epsilon\right)+\left(t-u_\epsilon\right) \xi h\left(u_\epsilon\right) h_k\left(u_{\varepsilon}\right) z_\epsilon
  $$
  and, integrating over $\Omega$ yields
  $$
  \int_{\Omega} \xi h\left(u_\epsilon\right) j(t) d x \geq \int_{\Omega} \xi h\left(u_\epsilon\right) j_{\tilde{\epsilon}}\left(u_\epsilon\right) d x+\int_{\Omega}\left(t-u_\epsilon\right) \xi h\left(u_\epsilon\right) h_k\left(u_{\varepsilon}\right) z_\epsilon d x .
  $$

  As $\in \longrightarrow 0$, we get by using Fatou's Lemma
  $$
  \int_{\Omega} \xi h(u) j(t) d x \geq \int_{\Omega} \xi h(u) j_\epsilon(u) d x+\liminf _{\epsilon \rightarrow 0} \int_{\Omega}\left(t-u_\epsilon\right) \xi h\left(u_\epsilon\right) h_k\left(u_{\varepsilon}\right) z_\epsilon d x .
  $$
  Now, for any $\xi \in C_c^1(\Omega)$ and $t \in \mathbb{R}$, setting
  $$
  \tilde{h}(r)=(t-r) h(r),
  $$
  we have
  $$
  \begin{aligned}
  \lim _{\epsilon \rightarrow 0} \int_{\Omega}\left(t-u_\epsilon\right) h\left(u_\epsilon\right) \xi h_k\left(u_{\varepsilon}\right) z_\epsilon d x & =\lim _{\epsilon \rightarrow 0} \int_{\Omega} \tilde{h}\left(u_\epsilon\right) \xi h_k\left(u_{\varepsilon}\right) z_\epsilon d x \\
  & =\int_{\Omega}(t-u) h(u) \xi d z_k \\
  & =\int_{\Omega}(t-u) h(u) \xi d z .
  \end{aligned}
  $$
  So,
  $$
  \int_{\Omega} \xi h(u) j(t) d x \geq \int_{\Omega} \xi h(u) j_\epsilon(u) d x+\int_{\Omega} \xi(t-u) h(u) d z .
  $$
  As $\tilde{\epsilon} \longrightarrow 0$, we get by using again Fatou's Lemma
  $$
  \int_{\Omega} \xi h(u) j(t) d x \geq \int_{\Omega} \xi h(u) j(u) d x+\int_{\Omega} \xi(t-u) h(u) d z .
  $$
  From the inequality above we have
  
  \begin{equation}\label{3.17}
  h(u) j(t) \geq h(u) j(u)+(t-u) h(u) z, \quad \text { in } \mathcal{M}_b(\Omega), \forall t \in \mathbb{R} .
  \end{equation}
  
  Using the Radon-Nikodym decomposition of $z$ we have $z=w \mathcal{L}^N+v$ with $v \perp \mathcal{L}^N, w \in L^1(\Omega)$, then comparing the regular part and the singular part of (\ref{3.17}), for any $h \in \mathcal{C}_c(\mathbb{R})$. we obtain
  \begin{equation}\label{3.18}
  h(u) j(t) \geq h(u) j(u)+(t-u) h(u) w \quad \mathcal{L}^N \text {-a.e. in } \Omega, \forall t \in \mathbb{R}
  \end{equation}
  
  and
  
  \begin{equation}\label{3.19}
  (t-u) h(u) v \leq 0 \text { in } \mathcal{M}_b(\Omega), \forall t \in \overline{\operatorname{dom}(j)}.
  \end{equation}
  
  From (\ref{3.18}) we get
  $$
  j(t) \geq j(u)+(t-u) w \quad \mathcal{L}^N \text {-a.e. in } \Omega, \forall t \in \mathbb{R},
  $$
  so that $w \in \partial j(u) \mathcal{L}^N$-a.e. in $\Omega$. As to (\ref{3.19}), this implies that for any $t \in \overline{\operatorname{dom}(j)}$.

  \begin{equation}\label{3.20}
  v \geq 0 \text { in }[u \in(t, \infty) \cap \operatorname{supp}(h)]
  \end{equation}
  
  and

  \begin{equation}\label{3.21}
  v \leq 0 \text { in }[u \in(-\infty, t) \cap \operatorname{supp}(h)] .
  \end{equation}
  
  In particular, this implies that
  $$
  v([m<u<M])=0 .
  $$
  Moreover, if $m \neq-\infty$ (resp. $M \neq+\infty$ ), then (\ref{3.21}) (resp. (\ref{3.20})) implies that
  $$
  v^{-} \text {is concentrated on }[u=m] \quad \text { (resp. } v^{+} \text {is concentrated on }[u=M] \text { ). }
  $$
  By construction of $z$, we see that
  $$
  v([u= \pm \infty])=0,
  $$
  and the proof of Lemma \ref{L3.2} is finished.
  \end{proof}
  
  To finish the proof of Theorem \ref{1.1}. we consider $\xi \in W_0^{1, {\overrightarrow{p}}}(\Omega) \cap L^{\infty}(\Omega)$, and $h \in C_c^1(\mathbb{R})$. Then, we take $h\left(u_\epsilon\right) \xi$ as test function in (\ref{3.2}). We get
  \begin{equation}\label{3.22}
  \int_{\Omega} a\left(x, D u_\epsilon\right) \cdot D\left[h\left(u_\epsilon\right) \xi\right] d x+\int_{\Omega} \beta_\epsilon\left(u_\epsilon\right) h\left(u_\epsilon\right) \xi d x=\int_{\Omega} h\left(u_\epsilon\right) \xi f_\epsilon d x+\int_{\Omega} F \cdot D\left[h\left(u_\epsilon\right) \xi\right] d x .
  \end{equation}
  
  Using Lemma \ref{L3.1}, it is not difficult to see that
  $$
  \lim _{\epsilon \rightarrow 0}\left(\int_{\Omega} h\left(u_\epsilon\right) \xi f_\epsilon d x+\int_{\Omega} F \cdot D\left[h\left(u_\epsilon\right) \xi\right] d x\right)=\int_{\Omega} h(u) \xi d \mu.
  $$
  
  The first term of (\ref{3.22}) can be written as
  $$
  \int_{\Omega} a\left(x, D u_\epsilon\right) \cdot D\left[h\left(u_\epsilon\right) \xi\right] d x=\int_{\Omega} a\left(x, D T_{l_0+1}\left(u_\epsilon\right)\right) \cdot D\left[h_0\left(u_\epsilon\right) \xi\right] d x,
  $$
  for some $l_0>0$ so that, by Lemma \ref{P3.2} (i) and Lemma \ref{L3.1}, we have
  $$
  \begin{aligned}
  \lim _{\epsilon \rightarrow 0} \int_{\Omega} a\left(x, D u_\epsilon\right) \cdot D\left[h\left(u_\epsilon\right) \xi\right] d x & =\lim _{\epsilon \rightarrow 0} \int_{\Omega} a\left(x, D T_{l_0+1}\left(u_\epsilon\right)\right) \cdot D\left[h_0\left(u_\epsilon\right) \xi\right] d x \\
  & =\int_{\Omega} a\left(x, D T_{l_0+1}(u)\right) \cdot D\left[h_0(u) \xi\right] d x \\
  & =\int_{\Omega} a(x, D u) \cdot D[h(u) \xi] d x .
  \end{aligned}
  $$

  Thanks to the convergence of Lemma \ref{L3.2} we have from (\ref{3.22})
  $$
  \begin{aligned}
  \lim _{\epsilon \rightarrow 0} \int_{\Omega} \beta_\epsilon\left(u_\epsilon\right) h\left(u_\epsilon\right) \xi d x & =\int_{\Omega} h(u) \xi d \mu-\int_{\Omega} a(x, D u) \cdot D[h(u) \xi] d x \\
  & =\int_{\Omega} h(u) \xi d z \\
  & =\int_{\Omega} h(u) w \xi d x+\int_{\Omega} h(u) \xi d v .
  \end{aligned}
  $$
  
  Letting $\epsilon$ go to 0 in (\ref{3.22}) it yields that $(u, w)$ is a solution of the problem $(E,\mu)$.
  To end the proof of Theorem \ref{1.1}, we prove (\ref{1.9}). We take $\varphi=T_1\left(u_\epsilon-T_n\left(u_\epsilon\right)\right)$ as test function in (\ref{3.2}) to get
  \begin{equation}\label{3.23}
  \begin{gathered}
  \int_{\Omega} a\left(x, D u_\epsilon\right) \cdot D\left[T_1\left(u_\epsilon-T_n\left(u_\epsilon\right)\right)\right] d x+\int_{\Omega} \beta_\epsilon\left(u_\epsilon\right) T_1\left(u_\epsilon-T_n\left(u_\epsilon\right)\right) d x \\
  =\int_{\Omega} T_1\left(u_\epsilon-T_n\left(u_\epsilon\right)\right) f_\epsilon d x+\int_{\Omega} F \cdot D\left[T_1\left(u_\epsilon-T_n\left(u_\epsilon\right)\right)\right] d x .
  \end{gathered}
  \end{equation}

  Since $\int_{\Omega} \beta_\epsilon\left(u_\epsilon\right) T_1\left(u_\epsilon-T_n\left(u_\epsilon\right)\right) d x \geq 0$ and $D\left[T_1\left(u_\epsilon-T_n\left(u_\epsilon\right)\right)\right]=D u_\epsilon \chi_{[n<\left|u_\epsilon\right| < n+1]}.$  We have from equality (\ref{3.23})
  
  \begin{equation}\label{3.24}
  \int_{\left[n<\left|u_\epsilon\right|<n+1\right]} a\left(x, D u_\epsilon\right) \cdot D u_\epsilon d x \leq \int_{\Omega} T_1\left(u_\epsilon-T_n\left(u_\epsilon\right)\right) f_\epsilon d x+\int_{\Omega} F \cdot D\left[T_1\left(u_\epsilon-T_n\left(u_\epsilon\right)\right)\right] d x.
  \end{equation}
  
  As $\in \longrightarrow 0$ in (\ref{3.24}). we get
  \begin{equation}\label{3.25}
  \int_{[n \leq|u| \leq n+1]} a(x, D u) \cdot D u d x \leq \int_{\Omega} T_1\left(u-T_n(u)\right) f d x+\int_{\Omega} F \cdot D\left[T_1\left(u-T_n(u)\right)\right] d x .
  \end{equation}
  
  Using assumption $(\mathbf{H}_{1})$, it follows
  \begin{equation}\label{3.26}
  \lambda\sum_{i=1}^{N} \int_{[n \leq|u| \leq n+1]}|D u|^{p_i} d x \leq \int_{\Omega} T_1\left(u-T_n(u)\right) f d x+\int_{\Omega} F \cdot D\left[T_1\left(u-T_n(u)\right)\right] d x .
  \end{equation}
  
  Using the proof of Lemma \ref{P3.2} (i). Step 2, one sees that $\lim _{n \rightarrow+\infty} \int_{\Omega} T_1\left(u-T_n(u)\right) f d x=0$ and $\lim _{n \rightarrow+\infty} \int_{\Omega} F . D\left[T_1\left(u-T_n(u)\right)\right] d x=0$ so that we get (\ref{1.9}).

  \section{ Proofs of Theorem \ref{1.2} and Theorem \ref{1.3}}

  Let us first prove Theorem \ref{1.2}.
  \begin{proof}

   Let us consider the function $h_n, n>0$, defined on $\mathbb{R}$ by $h_n(r)=\inf \{1,(n+1-$ $\left.|r|)^{+}\right\}$. If $(u, w)$ is a solution of $(E,\mu)$. we take $h=h_n$ in (\ref{1.8}). We have for any $\varphi \in W_0^{1, {\overrightarrow{p}}}(\Omega) \cap$ $L^{\infty}(\Omega)$
  \begin{equation}\label{4.1}
  \int_{\Omega} a(x, D u) \cdot D\left(h_n(u) \varphi\right) d x+\int_{\Omega} w h_n(u) \varphi d x+\int_{\Omega} h_n(u) \varphi d v=\int_{\Omega} h_n(u) \varphi d \mu.
  \end{equation}
  
  Since $D\left(h_n(u) \varphi\right)=h_n(u) D \varphi+h_n^{\prime}(u) \varphi D u$ it follows from (\ref{4.1})
  
  \begin{equation}\label{4.2}
  \begin{aligned}
  & \int_{\Omega} h_n(u) a(x, D u) \cdot D \varphi d x+\int_{\Omega} h_n^{\prime}(u) \varphi a(x, D u) \cdot D u d x+\int_{\Omega} w h_n(u) \varphi d x+\int_{\Omega} h_n(u) \varphi d v \\
  & \quad=\int_{\Omega} h_n(u) \varphi d \mu .
  \end{aligned}
  \end{equation}
  
  We have $h_n(u) \longrightarrow 1$ a.e. in $\mathbb{R}$ as $n \longrightarrow+\infty$. Excepted the second term, all the terms in (\ref{4.2}) pass to the limit by Lebesgue dominated convergence theorem and when $n \longrightarrow+\infty$, we get
  
  \begin{equation}\label{4.3}
  \int_{\Omega} a(x, D u) \cdot D \varphi d x+\limsup _{n \rightarrow+\infty} \int_{\Omega} h_n^{\prime}(u) \varphi a(x, D u) \cdot D u d x+\int_{\Omega} w \varphi d x+\int_{\Omega} \varphi d v=\int_{\Omega} \varphi d \mu .
  \end{equation}
  
  For the second term in (\ref{4.3}), we have
  $$
  \begin{aligned}
  & \left|\int_{\Omega} h_n^{\prime}(u) \varphi c(x, D u) \cdot D u d x\right| \\
  & \leq \int_{[n \leq|u| \leq n+1]}\left|h_n^{\prime}(u) \varphi a(x, D u) \cdot D u\right| d x \\
  & \leq\|\varphi\|_{\infty} \int_{[n \leq|u| \leq n+1]}|a(x, D u) \cdot D u| d x \\
  & \leq \Lambda\|\varphi\|_{\infty} \sum_{i=1}^{N}\int_{[n \leq|u| \leq n+1]}\left(j_1(x)|D u|+|D u|^{p_i}\right) d x \\
  & \leq \Lambda\|\varphi\|_{\infty}\left(\left\|j_1\right\|_{L \overrightarrow{p^{\prime}}(\Omega)}\left(\sum_{i=1}^{N}\int_{[n \leq|u| \leq n+1]}|D u|^{p_i} d x\right)^{\frac{1}{p_i}}+\sum_{i=1}^{N}\int_{[n \leq|u| \leq n+1]}|D u|^{p_i} d x\right) \\
  & \longrightarrow 0 \text { as } n \longrightarrow+\infty \text { (thanks to (\ref{1.9})). } \\
  &
  \end{aligned}
  $$
  Now, we replace $\varphi$ by $T_k(u-\xi)$ in (\ref{4.3}) to get
  \begin{equation}\label{4.4}
  \int_{\Omega} a(x, D u) \cdot D T_k(u-\xi) d x+\int_{\Omega} w T_k(u-\xi) d x+\int_{\Omega} T_k(u-\xi) d v=\int_{\Omega} T_k(u-\xi) d \mu .
  \end{equation}
  
  Note that, since $\xi \in \operatorname{dom} \beta$.

  $$
  \begin{aligned}
  \int_{\Omega} T_k(u-\xi) d v & =\int_{\Omega} T_k(u-\xi) d v^{+}-\int_{\Omega} T_k(u-\xi) d v^{-} \\
  & =\int_{\substack{[u=M]}} T_k(u-\xi) d v^{+}-\int_{[u=m]} T_k(u-\xi) d v^{-} \\
  & \geq 0 ;
  \end{aligned}
  $$
  then (\ref{1.10}) follows from (\ref{4.4}).
  \end{proof}
  
  \begin{remark}

  The uniqueness of a solution in the sense of Theorem \ref{1.1} and Corollary \ref{C1.1} is not clear in general. Thanks to \cite{ref:33}, the uniqueness of a solution in the sense of Corollary \ref{C1.1} holds to be true for the so-called comparable solutions. That is, any two solutions $\left(u_1, w_1\right)$ and $\left(u_2, w_2\right)$ such that the difference $u_1-u_2$ is bounded. We'll not abort this question in this paper and refer the reader to the papers \cite{ref:33} and \cite{ref:193} for more details in this direction.
  \end{remark}
  Now, let us prove the uniqueness of the solution for $(E,\mu)$ when $-\infty<m \leq 0 \leq M<\infty$. Suppose that $\left(u_1, w_1\right),\left(u_2, w_2\right)$ are two solutions of $(E,\mu)$. For $u_1$. we choose $\xi=u_2$ as test function in (\ref{1.10}). we have
  $$
  \int_{\Omega} a\left(x, D u_1\right) \cdot D T_k\left(u_1-u_2\right) d x+\int_{\Omega} w_1 T_k\left(u_1-u_2\right) d x \leq \int_{\Omega} T_k\left(u_1-u_2\right) d \mu .
  $$
  
  Similarly we get for $u_2$
  
  $$
  \int_{\Omega} a\left(x, D u_2\right) \cdot D T_k\left(u_2-u_1\right) d x+\int_{\Omega} w_2 T_k\left(u_2-u_1\right) d x \leq \int_{\Omega} T_k\left(u_2-u_1\right) d \mu .
  $$
  
  Adding these two last inequalities yields
  
  \begin{equation}\label{4.5}
  \int_{\Omega}\left(a\left(x, D u_1\right)-a\left(x, D u_2\right)\right) \cdot D T_k\left(u_1-u_2\right) d x+\int_{\Omega}\left(w_1-w_2\right) T_k\left(u_1-u_2\right) d x \leq 0 .
  \end{equation}
  
  For any $k>0$, from (\ref{4.5}) it yields
  \begin{equation}\label{4.6}
  \int_{\Omega}\left(a\left(x, D u_1\right)-a\left(x, D u_2\right)\right) \cdot D T_k\left(u_1-u_2\right) d x=0 .
  \end{equation}
  
  From (\ref{4.6}), it follows that there exists a constant $c$ such that $u_1-u_2=c$ a.e. in $\Omega$. Using the fact that $u_1=u_2=0$ on $\partial \Omega$ we get $c=0$. Thus, $u_1=u_2$ a.e. in $\Omega$. At last, let us see that $w_1=w_2$ a.e. in $\Omega$ and $v_1=v_2$. Indeed for any $\varphi \in \mathcal{D}(\Omega)$, taking $\varphi$ as test function in (\ref{1.11}) for the solutions $\left(u_1, w_1\right)$ and $\left(u_1, w_2\right)$. after subtraction of these equalities we get
  
  $$
  \int_{\Omega}\left(w_1-w_2\right) \varphi d x+\int_{\Omega} \varphi d\left(v_1-v_2\right)=0 .
  $$
  
  Hence
  
  $$
  \int_{\Omega} w_1 \varphi d x+\int_{\Omega} \varphi d v_1=\int_{\Omega} w_2 \varphi d x+\int_{\Omega} \varphi d v_2
  $$
  
  Therefore
  
  $$
  w_1 \mathcal{L}^N+v_1=w_2 \mathcal{L}^N+v_2 .
  $$

  Since the Radon-Nikodym decomposition of a measure is unique, we get $w_1=w_2$ a.e. in $\Omega$ and $v_1=v_2$.
  
  To complete the proof of Theorem (\ref{1.3}), it remains to show that (\ref{1.12}) and (\ref{1.13}) hold. To this aim. let us prove the following result.
  \begin{lemma}\label{L4.1}

   Let $\eta \in W_0^{1, {\overrightarrow{p}}}(\Omega), Z \in \mathcal{M}_b^{\overrightarrow{p}}(\Omega)$ and $\lambda \in \mathbb{R}$ be such that
  \begin{equation}\label{4.7}
  \left\{\begin{array}{l}
  \eta \leq \lambda \text { a.e. in } \Omega(\text { resp. } \eta \geq \lambda), \\
  Z=-\operatorname{div} a(x, D \eta) \quad \text { in } \mathcal{D}^{\prime}(\Omega) .
  \end{array}\right.
  \end{equation}
  
  Then
  
  \begin{equation}\label{4.8}
  \int_{[\eta=\lambda]} \xi d Z \geq 0
  \end{equation}
  
  (resp.)
  
  \begin{equation}\label{4.9}
  \int_{[\eta=\lambda]} \xi d Z \leq 0
  \end{equation}
  
  for any $\xi \in C_c^1(\Omega) . \xi \geq 0$.
  \end{lemma}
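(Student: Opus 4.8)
The strategy is to test the distributional identity $Z=-\operatorname{div}a(x,D\eta)$ against a sequence of functions $S_\delta(\eta)\xi$, where $S_\delta$ is a Lipschitz ramp with $S_\delta(\eta)\to\chi_{[\eta=\lambda]}$, and to combine the coercivity $(\mathbf{H}_{1})$ with the classical fact that $D\eta=0$ a.e.\ on the level set $[\eta=\lambda]$.

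\textbf{Construction of the test function and the pairing identity.} Consider first the case $\eta\le\lambda$ a.e.\ in $(\ref{4.7})$; the case $\eta\ge\lambda$ is symmetric. For $\delta>0$ set $S_\delta(r)=\min\bigl\{1,\tfrac1\delta(r-\lambda+\delta)^+\bigr\}$. Since $\eta\le\lambda$ a.e., one has $S_\delta(\eta)=\tfrac1\delta(\eta-\lambda+\delta)^+$, so $0\le S_\delta(\eta)\le1$, and (working with the quasicontinuous representative of $\eta$, as in the Remark following Theorem \ref{1.1}) $S_\delta(\eta)\to\chi_{[\eta=\lambda]}$ pointwise and boundedly as $\delta\to0$. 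Because $S_\delta$ is Lipschitz and $\xi\in C_c^1(\Omega)$, the product $S_\delta(\eta)\xi$ lies in $W_0^{1,\overrightarrow{p}}(\Omega)\cap L^\infty(\Omega)$. Moreover, by $(\mathbf{H}_{2})$ we have $a(x,D\eta)\in\bigl(L^{\overrightarrow{p'}}(\Omega)\bigr)^N$, hence $-\operatorname{div}a(x,D\eta)\in W^{-1,\overrightarrow{p'}}(\Omega)$; applying the characterization $\mathcal{M}_b^{\overrightarrow{p}}(\Omega)=L^1(\Omega)+W^{-1,\overrightarrow{p'}}(\Omega)$ to the representation $Z=0-\operatorname{div}a(x,D\eta)$ gives, for every $\varphi\in W_0^{1,\overrightarrow{p}}(\Omega)\cap L^\infty(\Omega)$,
$$\int_\Omega\varphi\,dZ=\int_\Omega a(x,D\eta)\cdot D\varphi\,dx.$$
In particular this holds with $\varphi=S_\delta(\eta)\xi$.

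\textbf{Expansion and passage to the limit.} Using $D\bigl(S_\delta(\eta)\xi\bigr)=S_\delta(\eta)D\xi+\xi S_\delta'(\eta)D\eta$ and $S_\delta'(\eta)D\eta=\tfrac1\delta D\eta\,\chi_{[\lambda-\delta<\eta<\lambda]}$ a.e., we obtain
$$\int_\Omega S_\delta(\eta)\xi\,dZ=\int_\Omega S_\delta(\eta)\,a(x,D\eta)\cdot D\xi\,dx+\frac1\delta\int_{[\lambda-\delta<\eta<\lambda]}\xi\,a(x,D\eta)\cdot D\eta\,dx.$$
The last integral is nonnegative because $\xi\ge0$ and, by $(\mathbf{H}_{1})$, $a(x,D\eta)\cdot D\eta\ge0$ a.e.; hence $\int_\Omega S_\delta(\eta)\xi\,dZ\ge\int_\Omega S_\delta(\eta)\,a(x,D\eta)\cdot D\xi\,dx$. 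Now let $\delta\to0$. On the left, since $Z$ is diffuse and $S_\delta(\eta)\to\chi_{[\eta=\lambda]}$ boundedly, dominated convergence with respect to $|Z|$ gives $\int_\Omega S_\delta(\eta)\xi\,dZ\to\int_{[\eta=\lambda]}\xi\,dZ$. On the right, dominated convergence with respect to $\mathcal{L}^N$ gives $\int_\Omega S_\delta(\eta)\,a(x,D\eta)\cdot D\xi\,dx\to\int_{[\eta=\lambda]}a(x,D\eta)\cdot D\xi\,dx$, which equals $0$ since $D\eta=0$ a.e.\ on $[\eta=\lambda]$. This yields $(\ref{4.8})$. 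For $\eta\ge\lambda$, the same computation with $S_\delta(r)=\min\bigl\{1,\tfrac1\delta(\lambda+\delta-r)^+\bigr\}$ produces the extra term $-\tfrac1\delta\int_{[\lambda<\eta<\lambda+\delta]}\xi\,a(x,D\eta)\cdot D\eta\,dx\le0$, hence $\int_\Omega S_\delta(\eta)\xi\,dZ\le\int_\Omega S_\delta(\eta)\,a(x,D\eta)\cdot D\xi\,dx$, and letting $\delta\to0$ gives $(\ref{4.9})$.

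\textbf{Main obstacle.} The chain rule, the two dominated convergences, and the sign of the coercive term are routine. The delicate points are: first, justifying that the quantity $\int\varphi f\,dx+\int D\varphi\cdot F\,dx$ in the diffuse-measure lemma is independent of the decomposition $Z=f-\operatorname{div}F$, so that it may be computed with $f=0$, $F=a(x,D\eta)$ (this follows by testing against $C_c^\infty(\Omega)$ and density in $W_0^{1,\overrightarrow{p}}(\Omega)\cap L^\infty(\Omega)$); and second, that $S_\delta(\eta)$, being a continuous function of a quasicontinuous function, is itself quasicontinuous, so that $\int S_\delta(\eta)\xi\,dZ$ is well defined and the pointwise convergence $S_\delta(\eta)\to\chi_{[\eta=\lambda]}$ (valid off a set of $\overrightarrow{p}$-capacity zero, which $|Z|$ does not charge) can legitimately be used under the integral sign.
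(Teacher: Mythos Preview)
Your proof is correct and follows essentially the same approach as the paper's: the paper uses the ramp $\varphi_n(r)=\min\{1,(n(r-\lambda)+1)^+\}$ (your $S_\delta$ with $\delta=1/n$), tests against $\varphi_n(\eta)\xi$, drops the nonnegative coercive term, and shows the remaining integral $\int_\Omega\varphi_n(\eta)\,a(x,D\eta)\cdot D\xi\,dx$ tends to zero. For the reverse inequality the paper applies the first case to $\tilde\eta=-\eta$, $\tilde\lambda=-\lambda$, $\tilde Z=-Z$, $\tilde a(x,z)=-a(x,-z)$, whereas you redo the computation with a mirrored ramp; the two are equivalent.
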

  \begin{proof}

   The proof of this Lemma follows the same steps of \cite{ref:201}. For seek of completeness, let us give the arguments. For any $n \geq 1$, we set $\varphi_n(r)=\inf \left\{1,(n r-n \lambda+1)^{+}\right\}, \forall r \in \mathbb{R}$. Since $Z \in \mathcal{M}_b^{\overrightarrow{p}}(\Omega), \varphi_n(\eta) \longrightarrow$ $\chi_{\{\eta=\lambda\}}$ quasi everywhere, and since $Z$ is diffuse the convergence is also $Z$-a.e. Then for any $\xi \in C_c^1(\Omega)$. $\xi \geq 0$, we have
  $$
  \begin{aligned}
  \int_{[\eta=\lambda]} \xi d Z & =\lim _{n \rightarrow+\infty} \int_{\Omega} \xi \varphi_n(\eta) d Z \\
  & =\lim _{n \rightarrow+\infty} \int_{\Omega} a(x, D \eta) \cdot D\left[\xi \varphi_n(\eta)\right] d x \\
  & \geq \lim _{n \rightarrow+\infty} \int_{\Omega} \varphi_n(\eta) a(x, D \eta) \cdot D \xi d x
  \end{aligned}
  $$
  Furthermore
  $$
  \left|\int_{\Omega} \varphi_n(\eta) a(x, D \eta) \cdot D \xi d x\right| \leq\|D \xi\|_{\infty} \int_{\left[\lambda-\frac{1}{n} \leq \eta \leq \lambda\right]}|a(x, D \eta)| d x \rightarrow 0 \text { as } n \rightarrow+\infty
  $$
  
  This gives (\ref{4.8}). The proof of (\ref{4.9}) follows the same way by letting $\tilde{\eta}=-\eta, \tilde{\lambda}=-\lambda, \tilde{Z}=-Z$ and $\tilde{a}(x, z)=-a(x,-z)$.
  \end{proof}
  
  Coming back to the proof of (\ref{1.12}) and (\ref{1.13}), we see that, since
  $$
  v=\operatorname{div} a(x, D u)-w \mathcal{L}^N+\mu
  $$

  we have
  $$
  \mu-v-w \mathcal{L}^N=-\operatorname{div} a(x, D u) .
  $$
  By Lemma \ref{L4.1}, for any $\xi \in C_c^1(\Omega), \xi \geq 0$, we have
  $$
  \int_{[u=M]} \xi d v^{+} \leq \int_{[u=M]} \xi d \mu-\int_{[u=M]} \xi w d x
  $$
  and
  $$
  -\int_{[u=m]} \xi d v^{-} \geq \int_{[u=m]} \xi d \mu-\int_{[u=m]} \xi w d x .
  $$
  The first inequality implies that
  $$
  \int_{\Omega} \xi d v^{+} \leq \int_{\Omega} \xi d \mu \lfloor[u=M]-\int_{\Omega} \xi w \chi_{[u=M]} d x .
  $$
  Consequently (\ref{1.12}) holds. Similarly we get (\ref{1.13}).

\section{References}

\end{document}